\documentclass[a4paper,12pt,reqno]{amsart}

\usepackage{graphicx}
\usepackage{amsmath}
\usepackage{amsthm}
\usepackage{amssymb}
\usepackage{amstext}
\usepackage{mathtools}
\usepackage{multicol}
\usepackage{booktabs}
\usepackage{lmodern,textcomp}
\usepackage{eurosym}
\usepackage{bbm}
\usepackage{color}
\usepackage[shortlabels]{enumitem}
\usepackage{hyperref}
\usepackage[ansinew]{inputenc}
\usepackage{natbib}
\usepackage[a4paper,
left=2.5cm,
right=2.5cm,
top=2.7cm,
bottom=3cm]{geometry}

\newcommand{\floor}[1]{\left\lfloor #1 \right\rfloor}
\newcommand{\ceil}[1]{\lceil #1 \rceil}
\newcommand{\norm}[1]{\left|\left|#1\right|\right|}

\newcommand{\E}[1]{\mathrm E\left\{ #1 \right\}}

\newcommand{\Prob}[1]{\mathrm P\left(#1\right)}

\newcommand{\ER}[0]{\overline{\mathbb R}}
\newcommand{\EOuter}[1]{\mathrm E^*\hspace{-1mm}\left\{ #1 \right\}}
\newtheorem{theorem}{Theorem}
\newtheorem{lemma}{Lemma}
\newtheorem{remark}{Remark}

\newtheorem{proposition}{Proposition}

\newtheorem{corollary}{Corollary}
\newenvironment{AMS}{}{}
\newcommand{\affil}[1]{\textsuperscript{#1}}

\title[Sequential empirical processes under nonstationarity]{On the Weak Convergence of the Function-Indexed Sequential Empirical Process and its Smoothed Analogue under Nonstationarity}
\author{Florian Alexander Scholze\affil{1,2}}
\address[F.A. Scholze]{\affil{1}Otto-Friedrich-Universit\"at Bamberg, Institute of Statistics, Feldkirchenstra{\ss}e 21, D-96052 Bamberg, Germany}
\email[Corresponding author]{florian.scholze@uni-bamberg.de}

\author{Ansgar Steland\affil{2}}
\address[A. Steland]{\affil{2}RWTH Aachen University, Institute of Statistics, Pontdriesch 14-16, D-52062 Aachen, Germany}
\email{steland@stochastik.rwth-aachen.de}
\date{\today}

\begin{document}
	\maketitle
	
	\begin{abstract}
		We study the sequential empirical process indexed by general function classes and its smoothed set-indexed analogue. Sufficient conditions for asymptotic equicontinuity are provided for nonstationary arrays of time series. This yields comprehensive general results that are applicable to various notions of dependence, which is exemplified in detail for nonstationary $\alpha$-mixing series. Especially, we obtain the weak convergence of the sequential process under essentially the same mild assumptions as known for the classical empirical process. Core ingredients of the proofs are a novel maximal inequality for nonmeasurable stochastic processes, uniform chaining arguments and, for the set-indexed smoothed process, uniform Lipschitz properties.
	\end{abstract} 
	
	\textit{Keywords:}
	\keywords{Empirical process; functional central limit theorem; Kiefer process; maximal inequalities; non-stationary processes; smoothed sequential empirical process }
	
	\textit{AMS:}
	\begin{AMS}
		60F17; 62L10; 62G30; 62M10
	\end{AMS}

	\section{Introduction}\label{Section:Introduction}
	Let $(\Omega, \Upsilon, P)$ be a probability space, $(\mathcal X, \mathcal A)$ a Polish space, and $(X_{i,n}) = (X_{i,n}: (\Omega, \Upsilon) \to (\mathcal X, \mathcal A) ~|~ i = 1,...,n, n \in \mathbb N)$ be an array of $\mathcal X$-valued random variables. Denote by $\mathcal F$ a family of Borel measurable maps $\mathcal X \to \mathbb R$. We study the sequential empirical process, defined by
	\begin{align}\label{DEF:SEP}
		\mathbb Z_n(t,f) = \frac{1}{\sqrt{n}} \sum_{i = 1}^{\floor{nt}} \left( f(X_{i,n}) - \E{f(X_{i,n})} \right), ~ (t,f) \in [0,1]\times \mathcal F,
	\end{align}
	and  its smoothed version
	\begin{align}\label{DEF:SEP-SMOOTH}
		\mathbb Z_n^s(t,f) &= \mathbb Z_n(t,f) +  \frac{nt - \floor{nt}}{\sqrt{n}}\left( f(X_{\floor{nt}+1,n}) - \E{f(X_{\floor{nt}+1,n})} \right) , 
	\end{align}
	which we will generalize to set-indexed smoothed processes indexed by subsets of the time interval $ [0,1] $. The corresponding non-sequential empirical process is denoted by $ \mathbb G_n(f) = \mathbb Z_n(1,f) = \mathbb Z_n^s(1,f) $. 
	
	When it comes to nonstationary data, it is crucial to study conditions that ensure the weak convergence of these processes, either in the classical sense or in the ``relative'' sense recently proposed by \cite{PN25}. Of particular interest are investigations of their asymptotic tightness. Here, both the complexity of the indexing class and the dependence structure of the array matters, whereas finite-dimensional (fidi) convergence merely requires assumptions on the dependence and the elements of the class. The primary goal of this paper is therefore to establish and discuss sufficient conditions for the asymptotic tightness of $ \mathbb Z_n $ and $ \mathbb Z_n^s $. Specifically, we contribute to the literature on non-Borelian dependent processes, on sufficient regularity conditions for asymptotic equicontinuity in terms of moment bounds and Lipschitz-properties and on (sequential) empirical processes for strongly mixing arrays, all in the context of nonstationary time series. There are various obstacles to be overcome, especially in the context of nonstationary time series, and several tools and techniques, ranging from moment bounds to chaining arguments, need to be further developed for this setting. 
	
	The process $ \mathbb Z_n $, and to a somewhat smaller extent the smoothed process $ \mathbb Z_n^s $ as well,  has become a widely used tool in nonparametric statistics, e.g. in the field of change-point analysis (see, e.g., \cite{SN13}, \cite{AS2016}, \cite{PS2017}, \cite{AS2020}, \cite{MN20}, \cite{MN21}, and the references given in these papers), goodness-of-fit testing (see, e.g., \cite{Rem17}) and the construction of confidence intervals of estimators based on self-normalization (see \cite{BU15} and \cite{Shao2010}). The special case that $\mathcal F$ consists of indicators of $d$-dimensional intervals $(-\infty,x], x \in \mathbb R^d$ , has received by far the most attention (see, e.g., \cite{DDT14} for an overview and \cite{BU15}), but some recent applications also involve other and more general families $\mathcal F$ (see, e.g., \cite{HVS15}, \cite{AS2016}, \cite{PS2017}, \cite{MN20} and \cite{MN21}), and therefore the study of \eqref{DEF:SEP} and \eqref{DEF:SEP-SMOOTH} for general classes of functions $ \mathcal{F} $ is of interest.
	
	For i.i.d. and stationary observations, the weak limit theory of the empirical process and its sequential generalization are well established. We refer to \cite{DDT14} for a brief review and thus limit our discussion correspondingly.
	If $(X_{i,n}) = (X_n)$ is an i.i.d.-sequence and $\mathcal F$ is a set of square-integrable maps, then \cite[Thm.~2.12.1]{vdVW23} shows that $\mathbb Z_n$ converges weakly to a two-parameter process, the Kiefer process, if and only if $\mathbb G_n$ converges weakly to a one-parameter process, the $P$-Brownian bridge indexed by $ \mathcal F$. Since $\mathbb G_n = \mathbb Z_n(1,.)$, this is the best one can hope for as it reduces the problem of proving the weak convergence of $\mathbb Z_n$ to the task of proving that $\mathcal F$ is a Donsker class, which usually requires purely analytical considerations. It is natural to ask to which extent similar relations also hold in settings in which there are dependencies among the $(X_{i,n})$. Indeed, for a stationary sequence $(X_{i,n}) = (X_n)$ and a general family $\mathcal F$, \cite{DDT14} and \cite{MO20} establish the weak convergence of $\mathbb Z_n$ under multiple and strong mixing conditions on $(X_n)$, respectively, and \cite{BUCH2018} treats the case of long-range dependent stationary Gaussian sequences. Furthermore, \cite{VS14} prove the weak convergence of $\mathbb Z_n$ by imposing high-level assumptions on the empirical process $\mathbb G_n$ that can be verified under various combinations of short-range-dependence conditions on $(X_n)$ and conditions on the complexity of $\mathcal F$, thereby avoiding the need to specify a particular time series model and achieving a higher degree of generality. Given these results, the question arises whether the assumption of stationarity of the data can be dropped as well. However, to the best of our knowledge, weak convergence of $ \mathbb Z_n $ indexed by a family $ \mathcal F $ has not yet been studied for dependent nonstationary arrays, despite the growing body of literature on the weak convergence of the special case $\mathbb G_n$ in such settings (see \cite{AP94}, \cite{Hansen96}, and, more recently, \cite{MO20}, \cite{AS2020}, \cite{PR21}, \cite{PR22}, \cite{MS23} and \cite{BL24}). This reveals a gap between $\mathbb Z_n$ and $\mathbb G_n$ and providing suitable results for $\mathbb Z_n$ closes this gap and  contributes to sequential nonparametrics. For example, in the context of change-point analysis for nonparametric time series models, required assumptions on $ \mathbb Z_n $, as in \cite{AS2016}, \cite{MN20} or \cite{MN21}, can be simplified. Our work was  carried out independently of \cite{PN25}, which study a relative notion of weak convergence to handle nonstationarity and provided such result for the sequential empirical process under $ \beta$-mixing and square-root integrability of the bracketing entropy. 
	
	The main contributions of this paper on a comprehensive study of weak convergence, specifically tightness, of the sequential process and its smoothed version for nonstationary weakly dependent arrays are as follows. Similar to  \cite{VS14}, our Theorem \ref{UniformFCLT}  imposes high-level assumptions on $\mathbb G_n$ that can be verified under different combinations of dependency-restrictions on $(X_{i,n})$ and complexity-conditions on $\mathcal F$. A main tool for its proof is an asymptotically optimal inequality for maximum partial sums of \cite{MSS82}, which we generalize to nonmeasurable dependent processes. Similarly, we provide sufficient conditions for the smoothed sequential process indexed by $ \mathcal{A} \times \mathcal{F} $ for some suitable family $\mathcal A$ of subsets of $[0,1]$. Again, these conditions abstract from the dependency assumptions imposed on $(X_{i,n}) $ and complexity of $ \mathcal A \times \mathcal{F}$, and we apply our results to the case of a strongly mixing array $(X_{i,n})$, a well studied and widely used framework of weak dependence. For the sequential process $\mathbb Z_n$, we obtain extensions of \cite{MO20} and \cite{H05} that are essentially optimal in the sense that our tightness conditions are only marginally stricter than those imposed in these references. Moreover, our extension of \cite[Thm.~3]{H05} allows for exponentially large classes $\mathcal F$ and thereby significantly improves on existing results even for stationary sequences. Compared to \cite{PN25} our result considers the more general case of strong mixing and still allows for exponentially growing classes $ \mathcal{F} $ by requiring $ L_2$-integrable bracketing entropy. For the smoothed process $ \mathbb Z_n^s $, we obtain the to our knowledge first ever results that apply to a class $ \mathcal A$ of intervals. Lastly, we briefly discuss an application to change-point testing based on a class of probability metrics that includes the Wasserstein distances.
	
	The rest of the paper is organized as follows. In Section \ref{Section:UFCLT}, we introduce the theoretical framework, provide sufficient conditions for the weak convergence of $\mathbb Z_n$ under nonstationarity and, in a technical subsection, present an extension of \cite[Thm.~3.1]{MSS82} to nonmeasurable maps. Section \ref{Sec:Smoothed} presents the results for the smoothed process $\mathbb Z_n^s.$ The case of strongly mixing arrays is treated in Section \ref{Section:Examples-NEU}. \ref{Section:Application} gives an application to change-point testing. Lastly, Section \ref{Section:Discussion} provides a discussion and an outlook. All technical proofs are presented in Section \ref{Section:Appendix}.
	
	\textbf{Notation:}	If $A$ is a set, we denote by $\#(A)$ its cardinality and by $ \partial A $ its boundary. $A \triangle B$ denotes the symmetric difference of two sets $A$ and $B$. The extended real line is denoted by $\ER$. Metric spaces $(D,d)$ are endowed with their $d$-Borel $\sigma$-fields denoted by $\mathcal B(D)$ and measurability in a metric space is understood as Borel-measurability. If $D = M \times N$ and $\delta > 0$, we abbreviate $\sup_{(a,b) \in M \times N, d(a,b) \leq \delta}$ by $\sup_{d(a,b) \leq \delta}$ when no confusion can arise. Furthermore, for $k \in \mathbb N, p \in [1,\infty)$ and $x = (x_1,...,x_k) \in \mathbb R^k$, we denote by $|x|_p := (\sum_{i = 1}^k |x_i|^p)^{1/p}$ its $p$-norm and put $|x|_\infty = \max_{1 \leq i \leq k }|x_k|.$ If $ \psi \neq 0 $ is a Young function, i.e. a convex function on $ [0,\infty) $ with $ \psi(0)=0, $ we denote the associated Orlicz norm of a random variable $X$ by $ \| X \|_{L_\psi} $. The choice $\psi(x) = x^p, p\geq 1,$ corresponds to its $L_p$-norm and is denoted by $\norm{X}_{L_p}.$ If two sequences $(a_n), (b_n)$ satisfy $a_n \leq C b_n$ for all $n\in\mathbb N$ and a constant $C \geq 0$, we denote this as $a_n \lesssim b_n.$ The minimum of two real numbers $a,b$ is denoted by $a \land b$, their maximum by $a \lor b.$

	\section{The sequential empirical process}\label{Section:UFCLT}
	The framework of this paper is as follows: throughout, we assume the existence of a finite measurable and integrable function $F: \mathcal X \to \mathbb R$ called ``envelope'' that fulfills $\sup_{f \in \mathcal F} |f(x)| \leq F(x) < \infty$ for all $x \in \mathcal X$ and $\E{F(X_{i,n})} < \infty$ for all $1 \leq i \leq n, n \in \mathbb N.$ This entails that each $X_{i,n}$ induces a map $\Omega \to \ell^{\infty}(\mathcal F)$ which maps $ \omega \in \Omega $ to the function $ f \mapsto f(X_{i,n}(\omega)) - \E{f(X_{i,n})}$. That allows to view $\mathbb G_n$ and $\mathbb Z_n$ as random elements of $\ell^\infty(\mathcal F)$ and $\ell^\infty([0,1] \times \mathcal F)$, respectively, where, for $\Psi \neq \emptyset,$ $$ \ell^\infty(\Psi) = \left\{z: \Psi \to \mathbb R ~\bigg{|}~ \norm{z}_{\Psi} := \sup_{\psi \in \Psi} |z(\psi)| < \infty \right\}, $$ which is endowed with the $\norm{.}_\Psi$-Borel $\sigma$-field. Doing so is customary, but at the same time makes $\mathbb G_n$ and $\mathbb Z_n$ nonmeasurable. We therefore study their weak convergence in the sense of \cite[Def.~1.3.3]{vdVW23} that involves outer expectations and probabilities that will be denoted by $\EOuter{.}$ and $P^*(.)$, respectively. 
	
	It is well known (see \cite[Thm.~1.5.4 and 1.5.7]{vdVW23}) that $\mathbb Z_n$ converges weakly to a tight Borel map $\mathbb Z$ in the latter sense, in symbols $\mathbb Z_n \Rightarrow \mathbb Z$, if and only if the finite-dimensional marginals (fidis) of $\mathbb Z_n$ converge to those of $\mathbb Z$ and there exists a semimetric $\tau$ that makes $([0,1] \times \mathcal F,\tau)$ totally bounded and $\mathbb Z_n$ asymptotically uniformly equicontinuous in probability, i.e. 
	\begin{equation}\label{DEF:AEC}
	\forall ~ \varepsilon > 0: ~ \lim_{\delta \downarrow 0} \limsup_{n \to \infty} P^*\left( \sup_{\tau((s,f),(t,g)) \leq \delta}  |\mathbb Z_n(s,f) - \mathbb Z_n(t,g)| > \varepsilon \right) = 0. \tag{\text{AEC}}
	\end{equation}
	This holds true analogously for all other processes with bounded sample paths to appear in the course of this paper. Of these two conditions, condition \eqref{DEF:AEC} is usually what is more difficult to show, so we follow related work (see, e.g., \cite{AP94}, \cite{VS14} and \cite{MO20}) and focus on that part. Furthermore, there are already some results available from which the fidi-convergence of $\mathbb Z_n$ may be concluded under nonstationarity (see, e.g., \cite{Rio95}, \cite{DRW19}, \cite{Mies23} and \cite{AS2024}).
	
	Theorem \ref{UniformFCLT} below provides  sufficient conditions for \eqref{DEF:AEC} that apply to nonstationary arrays. As it might look somewhat complicated at first glance and its proof is fairly involved, we briefly sketch its underlying idea, first. A common first step towards \eqref{DEF:AEC} is to choose the semimetric $\tau$ as
	\begin{equation}\label{UFCLT-tau}
	\tau((s,f),(t,g)) = |s-f| + \rho(f,g)
	\end{equation}
	for some semimetric $\rho$ on $\mathcal F$, which allows to ``disentangle'' the two parameters of $\mathbb Z_n$ by means of the estimate
	\begin{align}\label{Disentangled}
	&\sup_{\tau((s,f),(t,g)) \leq \delta}  |\mathbb Z_n(s,f) - \mathbb Z_n(t,g)| \nonumber \\ 
	&\leq \sup_{t \in [0,1]} \sup_{\rho(f,g) \leq \delta} \left|\mathbb Z_n(t,f) - \mathbb Z_n(t,g) \right| + \sup_{|s-t| \leq \delta} \sup_{f \in \mathcal F} \left|\mathbb Z_n(t,f) - \mathbb Z_n(s,f) \right|.
	\end{align}
	Denoting $$\mathcal F_\delta = \{f-g ~|~ f,g \in \mathcal F, \rho(f,g) \leq \delta\}, ~ \delta > 0,$$ we may then write 
	\begin{align*}
	&\sup_{t \in [0,1]} \sup_{\rho(f,g) \leq \delta} \left|\mathbb Z_n(t,f) - \mathbb Z_n(t,g) \right| \\
	&= \max_{1 \leq k \leq n} \sup_{f \in \mathcal F_\delta} \left| n^{-\frac12} \sum_{i = 1}^{k} \left(f(X_{i,n}) - \E{f(X_{i,n})} \right) \right| = n^{-\frac12} \max_{1 \leq k \leq n} \norm{S_{n,1,k}}_{\mathcal F_\delta} 
	\end{align*}
	for $$ S_{n,i,j}(f) = \sum_{k = i}^{j} \left(f(X_{k,n}) - \E{f(X_{k,n})} \right), ~ 1 \leq i \leq j \leq n, ~ f \in \mathcal F \cup \bigcup_{\delta > 0} \mathcal F_\delta. $$ The rightmost term of \eqref{Disentangled} can be expressed similarly. It is therefore natural to approach the problem of verifying \eqref{DEF:AEC} by, firstly, applying an inequality to control the maximal partial sums indexed by $\mathcal F_\delta$ and, secondly, proving that the bound arising from such an inequality is of sufficient regularity in terms of $n$ and $\delta$ to imply \eqref{DEF:AEC}. So, informally, one needs to show that $$ n^{-\frac12} \max_{1 \leq k \leq n} \norm{S_{n,1,k}}_{\mathcal F_\delta} \leq g(n,\delta) \to 0, $$ if $n \to \infty$ followed by $\delta \downarrow 0,$ and do so analogously for the other term on the right-hand side of \eqref{Disentangled}. This method of proof has been used for stationary sequences in several works including \cite[Thm~2.12.1]{vdVW23} for i.i.d. data and in \cite[Lem.~2]{BU15} and \cite[Thm.~4.10]{VS14}. To some extent, we adopt this approach, but there are three major obstacles to be overcome in our setting. Firstly, despite the rich literature on maximum partial sums for real random variables (see, e.g, the overview in \cite{WU07} and also \cite{MSS82}), there are only few results for non-Borelian random maps (see \cite[App.~A.1]{vdVW23} and \cite{Zi97b} for independent processes and \cite[Prop.~1.(ii)]{WU07} for stationary ones), and neither of them applies to nonstationary dependent data. Secondly, while some results for real variables might be extendable to non-Borelian maps, the conditions one must verify to apply them might be natural for real variables, but practically infeasible for empirical processes. Lastly, once a bound $g(n,\delta)$ for the maximum partial sums is found, $g(n,\delta)$ should be simple enough in terms of $n$ and $\delta$ to give feasible sufficient conditions for $g(n,\delta) \to 0$ as $n \to \infty$ followed by $\delta \downarrow 0$, thereby resulting in easily applicable sufficient conditions for \eqref{DEF:AEC}. The primary contribution of the following result is to identify from the literature a type of bound for the increments of the empirical process $S_{n,i,j}$  that solves all these problems simultaneously. 

\begin{theorem}(Asymptotic equicontinuity)\\
\label{UniformFCLT}
For $\nu > 2, \kappa \in (0, 1/2 - 1/\nu), C \geq 0$ and finite functions $R,J: (0,\infty) \to [0,\infty)$ let
\begin{equation*}\label{Generic-gn-Equation}
\gamma(m, \delta) = C m \left(R(\delta) + J(\delta) m^{-\kappa} \right)^2, ~ m \in \mathbb N, \delta > 0.
\end{equation*}
Furthermore, let $(\mathcal F, \rho)$ be totally bounded and suppose that for all $\delta > 0$ and $n \in \mathbb N$, it holds
\begin{equation}\label{UniformFCLT-gammabound}
\EOuter{ \norm{ S_{n,i,j} }_{\mathcal F_\delta}^\nu } \leq \gamma^{\frac{\nu}{2}}(j-i+1,\delta), ~ \forall ~ 1 \leq i \leq j \leq n,
\end{equation}
and that there exists $f_0 \in \mathcal F$ with
\begin{equation}\label{UniformFCLT-f0bound}
\norm{S_{n,i,j}(f_0)}_{L_\nu} \leq C \sqrt{j-i+1}, ~ \forall ~ 1 \leq i \leq j \leq n.
\end{equation}  
If $\lim_{\delta \downarrow 0} R(\delta) = 0$, then \eqref{DEF:AEC} holds for $\tau$ from \eqref{UFCLT-tau} and fidi-convergence of $\mathbb Z_n$ implies weak convergence in $\ell^{\infty}([0,1] \times \mathcal F)$. 
\end{theorem}
\begin{remark}
That we need the bounds in \eqref{UniformFCLT-gammabound} to hold for some $\nu > 2$ is the price paid for control over the maximum partial sums instead of just $S_{n,1,n}$ and is a common requirement in the literature on asymptotically optimal bounds for maximum partial sums of dependent variables (see, e.g., \cite{Serf70} and \cite{MSS82}). An analogous condition also appears in \cite[Thm.~4.10]{VS14}. 
\end{remark}

To motivate and discuss the above conditions and relate them to known results, let us start by considering the case of i.i.d. data. If $\mathcal G$ is any set of measurable maps $\mathcal X \to \mathbb R$ with envelope $F$ that satisfies $\norm{F}_{L_2} \leq \delta$, then \cite[Thm.~19.34]{vdV98} gives 
\begin{align*}
&\EOuter{ \norm{ S_{n,1,n} }_{\mathcal G} } \\ 
&\lesssim \sqrt{n} \left( \int_0^\delta \sqrt{ 1 \lor \log N_{[]}(\varepsilon, \mathcal G, \norm{.}_{L_2}) } d\varepsilon + \sqrt{n} \norm{F \mathbbm{1}_{F > \sqrt{n} a(\delta)}}_{L_1}  \right).
\end{align*}
Here, the $L_p$-norms are with respect to $P$, the distribution of the data points, $N_{[]}$ is a bracketing number, and $a(\delta) = \delta / (1 \lor \log(N_{[]}(\delta, \mathcal G, \norm{.}_{L_2})))^{1/2}$. For weakly dependent nonstationary data, similar bounds have been established in \cite[Thm.~4.4]{PR21} for Bernoulli shifts, in \cite[Thm.~3.5]{PN25} for $\beta$-mixing arrays and, at least implicitly, in the proofs of \cite[Thm.~2.2]{AP94}, \cite[Thm.~3]{H05} and \cite[Thm.~2.5]{MO20} for strongly mixing sequences. Now, for i.i.d. data, the fact that the processes $S_{n,i,j}$ and $S_{n,1,j-i+1}$ are identically distributed immediately shows that for any $1 \leq i \leq j \leq n$, with $m = j-i+1$, it also holds that
\begin{align}\label{UniformFCLT_targetbound}
&\EOuter{ \norm{ S_{n,i,j} }_{\mathcal G} } \nonumber \\
&\lesssim \sqrt{m} \left( \int_0^\delta \sqrt{ 1 \lor \log N_{[]}(\varepsilon, \mathcal G, \norm{.}_{L_2}) } d\varepsilon + \sqrt{m} \norm{F \mathbbm{1}_{F > \sqrt{m} a(\delta)}}_{L_1}  \right).
\end{align}
Upon establishing these bounds for the $\nu$-th moment instead of the first one and proving that the Lindeberg-type remainder term is of the form $J(\delta) m^{-\kappa}$ (which follows from a moment condition on $F$ and H\"older's inequality, for instance), this bound is now of the form required by \eqref{UniformFCLT-gammabound}. Here, $R(\delta)$ is just the bracketing integral, thus simple bracketing conditions entail $R(1) < \infty$ and therefore $R(\delta) \to 0$ for $\delta \downarrow 0$ and $J(\delta) < \infty$ for any $\delta > 0.$ In this case, $(\mathcal F, \norm{.}_{L_2})$ is totally bounded and \eqref{UniformFCLT-f0bound} holds trivially, hence all conditions of Theorem \ref{UniformFCLT} are met.

For univariate nonstationary time series, it is possible to derive bounds for the $S_{n,i,j}$ under different dependency restrictions, including martingale differences and several notions of mixing (\cite{Serf70}). As these techniques also underlie the proofs of the existing results for the sums $S_{n,1,n}$ cited above, one can expect bounds of the form \eqref{UniformFCLT_targetbound} to be feasible in those settings as well. We therefore anticipate that Theorem \ref{UniformFCLT} is applicable to a wide range of situations. 

Lastly, it is worth mentioning that bounds similar to\eqref{UniformFCLT_targetbound} can also be derived without referring to bracketing numbers or entropy conditions. For unit balls of (generalized) Lipschitz functions, \cite[Thm.~8.1]{Rio17} shows
$$ \EOuter{ \sup_{\norm{ f-g }_{L_2} \leq \delta} \left| S_{n,1,n}(f) - S_{n,1,n}(g) \right|^2 } \lesssim n \delta^{2(1-\theta)} $$ for stationary sequences with summable $\alpha$-coefficients. Here, $\theta \in (0,1)$ is a constant that depends on the regularity and domain of the functions $f$. Clearly, the above bound is of the form required by \eqref{UniformFCLT-gammabound} if we take $R(\delta) = \delta^{1-\theta} \to 0$, if $\delta \downarrow 0,$ and $J(\delta) = 0.$ 

\subsection{Outline of the proof of Theorem \ref{UniformFCLT}}
Theorem \ref{UniformFCLT} is a consequence of a more general result on the asymptotic equicontinuity of sequential processes (see Theorem \ref{UniformFCLT-general} below) and a sequence of technical results. The first step towards its proof is to extend \cite[Thm.~3.1]{MSS82} to non-Borelian maps into the space $\ell^\infty(\Psi)$ for an arbitrary $\Psi \neq \emptyset$, which might be of independent interest. To state this result concisely, we introduce some additional notation. Let $\Psi \neq \emptyset$ and let $W_1,...,W_n: \Omega \to \ell^\infty(\Psi)$ be arbitrary processes. To avoid confusion with our preceeding notation, for $1 \leq i \leq j \leq n$ and $\psi \in \Psi$, we denote
\begin{align}\label{DEF:Sum_Max}
S_{i,j}^W(\psi) = \sum_{k = i}^{j} \left( W_k(\psi) - \E{W_k(\psi)} \right), ~~~ M_{i,j}^W(\psi) = \max_{k = i,...,j} \left|S_{i,k}^W(\psi) \right|,
\end{align}
and define $S_{i,j}^W(\psi) = 0 = M_{i,j}^W(\psi)$ for $j < i$. The quantity $M_{n,i,j}$ is defined analogously from $S_{n,i,j}$. Furthermore, for $\alpha > 1$ and a function $q: \mathbb N \to \mathbb R$, we say that the pair $(\alpha, q)$ fulfills condition \eqref{MSS-ConditionM} with index $Q \in [1,2^{(\alpha-1)/\alpha})$, if 
\begin{align}\label{MSS-ConditionM}
\mathrm{(i)}&:~ q \geq 0, \nonumber\\
\mathrm{(ii)}&:~ q \text{ is nondecreasing, } \tag{\text{S}} \\
\mathrm{(iii)}&:~ \text{for each } 1 \leq i < j, ~  q(i) + q(j-i) \leq Qq(j). \nonumber
\end{align} 
The conditions (i)-(iii) are essentially adopted from \cite{MSS82} (c.f. conditions (1.2a)-(1.2c) therein). The following result partially extends \cite[Thm.~3.1]{MSS82} to non-Borelian maps and is used in the proof of Theorem \ref{UniformFCLT}. A more general version can be found in Proposition \ref{MSSAdaption_general}.

\begin{proposition} (Maximum partial sums of processes)\\
\label{MSSAdaption}
Let $\nu \geq 1$, $\Psi \neq \emptyset$ and $W_1,...,W_n: \Omega \to \ell^\infty(\Psi)$ be arbitrary processes. If, for all $1 \leq i \leq j \leq n,$ it holds 
\begin{equation}\label{SumboundMSS}
\mathrm E^*\left\{ \norm{ S_{i,j}^W}_{\Psi}^\nu \right\} \leq q^{\alpha}(j-i+1)
\end{equation}
for a pair $(\alpha, q)$ that fulfills condition \eqref{MSS-ConditionM} with index $Q \in [1,2^{(\alpha-1)/\alpha})$ , then there exists a constant $A$ that only depends on $\alpha, \nu,$ and $Q$ for which we have
\begin{equation}\label{MaxBoundMSS}
\mathrm E^* \left\{ \norm{ M_{i,j}^W }_\Psi^\nu \right\} \leq A q^{\alpha}(j-i+1)
\end{equation}
for any $1 \leq i \leq j \leq n$.
\end{proposition}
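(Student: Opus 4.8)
The plan is to bound the nonmeasurable supremum $\sup_{\psi\in\Psi}\bigl|M_{i,j}^W(\psi)\bigr|$ by a sum, over the $O(\log)$ dyadic scales, of block maxima whose outer moments are directly controlled by \eqref{SumboundMSS}, and then to sum a geometric series whose ratio is $<1$ precisely because $Q<2^{(\alpha-1)/\alpha}$. Throughout I write $T_{a,b}:=\sup_{\psi}\bigl|S_{a,b}^W(\psi)\bigr|$ and $U_{a,b}:=\sup_{\psi}\bigl|M_{a,b}^W(\psi)\bigr|$, so that \eqref{SumboundMSS} reads $\EOuter{T_{a,b}^\nu}\le g^\alpha(b-a+1)$ and the claim is $\EOuter{U_{a,b}^\nu}\le A\,g^\alpha(b-a+1)$. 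The only genuinely new feature compared with the classical (measurable) inequality of \cite{MSS82} is that $\sup_{\psi}$ destroys measurability, so each estimate has to be performed with outer expectations and the associated measurable cover functions rather than with ordinary integrals.

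\textbf{Dyadic decomposition.} Fix $1\le i\le j\le n$; after relabeling the $W_k$ I may assume $i=1$ and set $m:=j-i+1$ and $L:=\floor{\log_2 m}$. Writing any index $k\in\{1,\dots,m\}$ in binary decomposes the prefix $\{1,\dots,k\}$ into at most one dyadic block of each length $2^r$, $r=0,\dots,L$, so that, pathwise in $\omega$ and $\psi$,
\[
\bigl|M_{1,m}^W(\psi)\bigr|=\max_{1\le k\le m}\bigl|S_{1,k}^W(\psi)\bigr|\le\sum_{r=0}^{L}\ \max_{B}\ \bigl|S_{B}^W(\psi)\bigr|,
\]
where for each $r$ the inner maximum runs over the finitely many pairwise disjoint dyadic blocks $B\subset\{1,\dots,m\}$ of length $2^r$ and $S_B^W$ abbreviates the corresponding consecutive partial sum. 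Taking $\sup_\psi$, using its subadditivity, and interchanging $\sup_\psi$ with the finite inner maximum, I obtain, pathwise in $\omega$,
\[
U_{1,m}\le\sum_{r=0}^{L}V_r,\qquad V_r:=\max_{B}\,T_{B},
\]
with $T_B:=\sup_\psi|S_B^W(\psi)|$. This bound is completely measurability-agnostic.

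\textbf{Outer moments and summation.} For each level $r$ the blocks $B$ are disjoint runs of consecutive indices of length $2^r$, and there are at most $\ceil{m/2^r}\le 2m/2^r$ of them. Hence $V_r^\nu=\max_B T_B^\nu\le\sum_B T_B^\nu$ pathwise, and finite subadditivity of $\EOuter{\cdot}$ together with \eqref{SumboundMSS} give $\EOuter{V_r^\nu}\le\sum_{B}\EOuter{T_B^\nu}\le (2m/2^r)\,g^\alpha(2^r)$. Iterating the balanced instance $g(2^{s})+g(2^{s})\le Q\,g(2^{s+1})$ of \eqref{MSS-ConditionM}(iii) (taking $i=j-i=2^{s}$, $j=2^{s+1}$) yields $g(2^r)\le (Q/2)^{L-r}g(2^L)\le (Q/2)^{L-r}g(m)$ by monotonicity of $g$, whence, with $\theta:=2^{1-\alpha}Q^\alpha$ and using $m<2^{L+1}$, $\EOuter{V_r^\nu}\le 4\,\theta^{\,L-r}\,g^\alpha(m)$. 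Applying the outer Minkowski inequality $\bigl(\EOuter{|X+Y|^\nu}\bigr)^{1/\nu}\le\bigl(\EOuter{|X|^\nu}\bigr)^{1/\nu}+\bigl(\EOuter{|Y|^\nu}\bigr)^{1/\nu}$ across the $L+1$ levels and summing the geometric series gives
\[
\bigl(\EOuter{U_{1,m}^\nu}\bigr)^{1/\nu}\le\sum_{r=0}^{L}\bigl(\EOuter{V_r^\nu}\bigr)^{1/\nu}\le 4^{1/\nu}\,g^{\alpha/\nu}(m)\sum_{s\ge 0}\theta^{\,s/\nu}=\frac{4^{1/\nu}}{1-\theta^{1/\nu}}\,g^{\alpha/\nu}(m).
\]
Since $Q<2^{(\alpha-1)/\alpha}$ is equivalent to $\theta<1$, the series converges, and raising to the power $\nu$ gives \eqref{MaxBoundMSS} with $A=4\,(1-\theta^{1/\nu})^{-\nu}$, which depends only on $\alpha,\nu,Q$.

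\textbf{Main obstacle.} The step I expect to require the most care is not the combinatorics but the passage to outer expectations: the three real-analytic moves used above---subadditivity $\EOuter{X+Y}\le\EOuter{X}+\EOuter{Y}$, the interchange of $\sup_\psi$ with a finite maximum, and the outer Minkowski inequality---must all be justified for the nonmeasurable maps $T_B,V_r,U_{1,m}$. This is exactly where the argument departs from \cite{MSS82}; I would establish these by passing to the measurable cover functions via $\EOuter{|X|^\nu}=\E{((|X|)^{*})^\nu}$ (the facts collected in Section \ref{Section:Sub-Preliminiaries}) and applying the classical inequalities to the covers. A minor additional point is the reduction from general block length $m$ to the dyadic scales, which only costs the harmless factor $4$ absorbed into $A$.
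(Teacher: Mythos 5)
Your proof is correct, but it takes a genuinely different route from the paper. The paper follows the induction scheme of M\'oricz--Serfling--Stout: it proves a more general bivariate version (Proposition \ref{MSSAdaption_general}) by induction on $n$, choosing a splitting point $m$ with $g(1,m-1)\le (Q/2)g(1,n)\le g(1,m)$, using the pathwise recursion $M_{1,n}^W(\psi)\le |S_{1,m}^W(\psi)|+\max\{M_{1,m-1}^W(\psi),M_{m+1,n}^W(\psi)\}$ together with the outer Minkowski inequality (Lemma \ref{MinkowskiForOuter}), and then specializing to $g(i,j)=g(j-i+1)$. You instead use binary chaining: a dyadic block decomposition of each prefix, a union bound over the at most $O(m/2^r)$ blocks at scale $r$, the iterated balanced instance $2g(2^s)\le Qg(2^{s+1})$ of \eqref{MSS-ConditionM}(iii) to get $g(2^r)\le (Q/2)^{L-r}g(m)$, and a geometric series with ratio $\theta^{1/\nu}$, $\theta=2^{1-\alpha}Q^\alpha$; the hypothesis $Q<2^{(\alpha-1)/\alpha}$ enters exactly as $\theta<1$, i.e.\ as the condition that the union-bound cost $2^{L-r}$ is beaten by the decay $(Q/2)^{\alpha(L-r)}$. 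Your computations check out (including the factor $4$ from $m<2^{L+1}$ and the block count, and the reduction to $i=1$ by relabeling), and the nonmeasurability is handled by the same three tools the paper uses: monotonicity and finite subadditivity of $\EOuter{\cdot}$ and Lemma \ref{MinkowskiForOuter}, all justified via measurable covers. What each approach buys: yours is more self-contained and uses only the special balanced case of \eqref{MSS-ConditionM}(iii) plus monotonicity and nonnegativity, but it relies essentially on $g$ depending only on the block length (the union bound needs one uniform value $g^\alpha(2^r)$ per scale), so it does not directly yield the bivariate Proposition \ref{MSSAdaption_general}, and its constant $A=4(1-\theta^{1/\nu})^{-\nu}$ is worse than the paper's $A=(1-Q^{\alpha/\nu}2^{-(\alpha-1)/\nu})^{-\nu}$; neither point affects the statement being proved, which only asks for some $A=A(\alpha,\nu,Q)$.
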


We can now state and prove 
\begin{theorem}(Asymptotic equicontinuity - general case)\\\label{UniformFCLT-general}
Let $\nu > 2$ and let $\rho$ be a semimetric on $\mathcal F$. Assume the following conditions to hold: 	
\begin{enumerate}[ref = \arabic*]
\item \label{item:Uniform_FCLT_Cond1} There exists $g: \mathbb N \times (0, \infty) \to \mathbb R$ such that for each $n \in \mathbb N$, $\delta > 0$, it holds 
\begin{equation}\label{UniformFCLT_Cond1_Eq}
\EOuter{ \norm{  S_{n,i,j} }_{\mathcal F_\delta}^\nu } \leq g^{\frac{\nu}{2}}(j-i+1, \delta), ~ \forall ~ 1 \leq i \leq j \leq n,
\end{equation}
and there is a universal index $Q_g \in [1,2^{1-2/\nu})$ such that for each $\delta > 0,$ the pair $(\nu/2, g(.,\delta))$ fulfills condition \eqref{MSS-ConditionM} with index $Q_g$. 

\item \label{item:Uniform_FCLT_Cond2} There exists $h: \mathbb N \to \mathbb R$ such that for each $n \in \mathbb N$, it holds 
\begin{equation}\label{UniformFCLT_Cond2_Eq}
\EOuter{ \norm{ S_{n,i,j} }_{\mathcal F}^\nu } \leq h^{\frac{\nu}{2}}(j-i+1), ~ \forall ~ 1 \leq i \leq j \leq n,
\end{equation}
and the pair $(\nu/2, h)$ fulfills condition \eqref{MSS-ConditionM} with index $Q_h \in [1,2^{1-2/\nu})$.

\item \label{item:Uniform_FCLT_Cond3} It holds $$\lim_{\delta \downarrow 0} \limsup_{n \to \infty} \frac{g(n,\delta)}{n} = 0.$$	

\item \label{item:Uniform_FCLT_Cond4} There exists a constant $C_h \geq 0$ such that for each $0 < \varepsilon  \leq 1,$ it holds $$ \limsup_{n \to \infty}  \frac{h(\floor{n \varepsilon})}{n}  \leq C_h \varepsilon $$ (with the convention $h(0) = 0$). 
\end{enumerate}
Then \eqref{DEF:AEC} holds for $\tau((s,f),(t,g)) = |s-t| + \rho(f,g).$	If, in addition, $(\mathcal F, \rho)$ is totally bounded, then fidi-convergence of $\mathbb Z_n$ implies weak convergence in $\ell^{\infty}([0,1] \times \mathcal F)$.
\end{theorem}

\begin{proof}[Proof of Theorem \ref{UniformFCLT-general}]
We begin as in the proof of \cite[Thm.~2.12.1]{vdVW23}. By the triangle inequality, for any $n \in \mathbb N$ and $\delta > 0,$
\begin{align}\label{TwoSuprema}
&\sup_{|s-t| + \rho(f,g) \leq \delta} |\mathbb Z_n(s,f) - \mathbb Z_n(t,g)| \nonumber \\
&\quad \leq \sup_{|s-t| \leq \delta} \sup_{f \in \mathcal F} |\mathbb Z_n(s,f) - \mathbb Z_n(t,f)| + \sup_{t \in [0,1]} \sup_{\rho(f,g) \leq \delta} |\mathbb Z_n(t,f) - \mathbb Z_n(t,g)|.
\end{align}
We estimate both terms with the aid of Proposition \ref{MSSAdaption}. Regarding the left term on the right-hand side of  \eqref{TwoSuprema}, it suffices to show that for any $\varepsilon>0$,
\begin{equation*}
\limsup_{n \to \infty} \mathrm P^* \left(\max_{j \in \mathbb N_0, 0 \leq j\delta \leq 1} \sup_{s \in [j\delta, (j+1)\delta]} \sup_{f \in \mathcal F} |\mathbb Z_n(s,f) - \mathbb Z_n(j\delta, f)| > \varepsilon \right) \to 0,
\end{equation*}
as $\delta \downarrow 0.$ So, let $\varepsilon > 0$, $n \in \mathbb N$ and $0 < \delta \leq 1/2$. By a union bound and since $0 \leq j\delta \leq 1$ entails $0 \leq j \leq \ceil{\delta^{-1}}$, we have
\begin{align}\label{maxsupsup-bound}
&\mathrm P^* \left(\max_{j \in \mathbb N_0, 0 \leq j\delta \leq 1} \sup_{s \in [j\delta, (j+1)\delta]} \sup_{f \in \mathcal F} |\mathbb Z_n(s,f) - \mathbb Z_n(j\delta, f)| > \varepsilon \right) \nonumber \\ 
&\leq \sum_{j = 0}^{\ceil{\delta^{-1}}} \mathrm P^* \left(\sup_{s \in [j\delta, (j+1)\delta]} \sup_{f \in \mathcal F} |\mathbb Z_n(s,f) - \mathbb Z_n(j\delta, f)| > \varepsilon \right).
\end{align}	
Now, for each $j = 0,...,\ceil{\delta^{-1}}$ and using that  $\floor{n(j+1)\delta} \leq \floor{nj\delta} + \floor{n\delta} + 1$, 
\begin{align*}
&\sup_{s \in [j\delta, (j+1)\delta]} \sup_{f \in \mathcal F} |\mathbb Z_n(s,f) - \mathbb Z_n(j\delta, f)|\\
&= \frac{1}{\sqrt{n}} \sup_{s \in [j\delta, (j+1)\delta]} \sup_{f \in \mathcal F} \left|\sum_{i = \floor{nj\delta} + 1}^{\floor{ns}} \left( f(X_{i,n}) - \E{f(X_{i,n})} \right) \right|\\
&\leq \frac{1}{\sqrt{n}} \max_{k = 1,...,\floor{n\delta} + 1} \norm{ S_{n,\floor{nj\delta} + 1, \floor{nj\delta} + k } }_{\mathcal F} \\
&= \frac{1}{\sqrt{n}} \norm{ M_{n,a(j) + 1, a(j) + \floor{n\delta} + 1}}_{\mathcal F},
\end{align*}
where $a(j) := \floor{nj\delta} \in \mathbb N_0$. Hence, by Markov's inequality, condition \ref{item:Uniform_FCLT_Cond2} and Proposition \ref{MSSAdaption}, we obtain
\begin{align*}
&\mathrm P^*\left(\sup_{s \in [j\delta, (j+1)\delta]} \sup_{f \in \mathcal F} |\mathbb Z_n(s,f) - \mathbb Z_n(j\delta, f)| > \varepsilon \right) \\
&\leq \mathrm P^*\left(\norm{ M_{n,a(j) + 1, a(j) + \floor{n\delta} + 1}}_{\mathcal F} > \sqrt{n} \varepsilon \right)\\ 
&\leq \left(\varepsilon \sqrt{n}\right)^{-\nu} \mathrm E^*\left\{\norm{ M_{n,a(j) + 1, a(j) + \floor{n\delta} + 1}}_{\mathcal F}^\nu\right\} \\
&\leq \left(\varepsilon \sqrt{n}\right)^{-\nu} A h^{\frac{\nu}{2}} (\floor{n\delta} + 1)
\end{align*}
for a constant $A$ that only depends on $\nu$ and $Q_h.$ Since $(\nu/2, h)$ fulfills condition \eqref{MSS-ConditionM}, $h$ is nondecreasing, and so, as $1 \leq \floor{n\delta}$ holds for all $n$ large enough and $2 \floor{x} \leq \floor{2x}$ for all $x \geq 0$, we conclude that
\begin{align*}
\limsup_{n \to \infty} \left(\varepsilon \sqrt{n}\right)^{-\nu} A h^{\frac{\nu}{2}} (\floor{n\delta} + 1) &\leq \limsup_{n \to \infty} \left(\varepsilon \sqrt{n}\right)^{-\nu} A h^{\frac{\nu}{2}} (2 \floor{n\delta}) \\ 
&\leq A \varepsilon^{-\nu} \limsup_{n \to \infty} \frac{h^{\frac{\nu}{2}} (\floor{2n\delta})}{n^{\frac{\nu}{2}}} \\ 		
&\leq A \varepsilon^{-\nu} C^\frac{\nu}{2} 2^{\frac{\nu}{2}} \delta^{\frac{\nu}{2}},
\end{align*}
where in the last step we applied condition \ref{item:Uniform_FCLT_Cond4} combined with  $2\delta \leq 1$. By inserting the latter bound into \eqref{maxsupsup-bound} and using that $\nu/2 > 1$, we thus obtain
\begin{align*}
&\limsup_{n \to \infty} \mathrm P^* \left(\max_{j \in \mathbb N_0, 0 \leq j\delta \leq 1} \sup_{s \in [j\delta, (j+1)\delta]} \sup_{f \in \mathcal F} |\mathbb Z_n(s,f) - \mathbb Z_n(j\delta, f)| > \varepsilon \right) \\
&\leq \sum_{j = 0}^{\ceil{\delta^{-1}}} \limsup_{n \to \infty} \mathrm P^* \left(\sup_{s \in [j\delta, (j+1)\delta]} \sup_{f \in \mathcal F} |\mathbb Z_n(s,f) - \mathbb Z_n(j\delta, f)| > \varepsilon \right) \\
&\leq \left(\ceil{\delta^{-1}} + 1\right) A \varepsilon^{-\nu} (2C)^{\frac{\nu}{2}} \delta^{\frac{\nu}{2}} \leq 3 \delta^{\nu/2 - 1} A \varepsilon^{-\nu} (2C)^{\frac{\nu}{2}}  \to 0,
\end{align*}
as $\delta \downarrow 0.$

It therefore remains to discuss the right term on the right-hand side of \eqref{TwoSuprema}. So, let again $n \in \mathbb N$ and $\varepsilon, \delta > 0$, then by linearity,
\begin{align*}
&\sup_{t \in [0,1]} \sup_{\rho(f,g) \leq \delta} |\mathbb Z_n(t,f) - \mathbb Z_n(t,g)| \\
&= \sup_{t \in [0,1]} \sup_{\rho(f,g) \leq \delta} |\mathbb Z_n(t,f-g)| \\
&= \frac{1}{\sqrt{n}} \max_{k = 1,...,n} \sup_{\tilde f \in \mathcal F_\delta} \left|\sum_{i = 1}^k \left( \tilde f(X_{i,n}) - \E{\tilde f(X_{i,n})} \right) \right|\\ 
&= \frac{1}{\sqrt{n}} \norm{ M_{n,1, n}}_{\mathcal F_\delta}.
\end{align*}
Hence, by Markov's inequality, condition \ref{item:Uniform_FCLT_Cond1} and Proposition \ref{MSSAdaption}, we obtain
\begin{align*}
&\mathrm P^*\left( \sup_{t \in [0,1]} \sup_{\rho(f,g) \leq \delta} |\mathbb Z_n(t,f) - \mathbb Z_n(t,g)| > \varepsilon \right) \\
&\leq (\varepsilon \sqrt{n})^{-\nu} \mathrm E^*\left\{ \norm{ M_{n,1,n}}_{\mathcal F_\delta}^\nu\right\} \\
&\leq (\varepsilon \sqrt{n})^{-\nu} B g^{\frac{\nu}{2}}(n,\delta)
\end{align*}
for a constant $B$ that only depends on $\nu$ and $Q_g$. In particular, since $Q_g$ does not depend on $\delta$, so does $B$. Hence, by making use of condition \ref{item:Uniform_FCLT_Cond3}, we conclude that
\begin{align*}
&\lim_{\delta \downarrow 0} \limsup_{n \to \infty} \mathrm P^*\left( \sup_{t \in [0,1]} \sup_{\rho(f,g) \leq \delta} \left|\mathbb Z_n(t,f) - \mathbb Z_n(t,g) \right| > \varepsilon \right) \\ 
&\leq \varepsilon^{-\nu} B \lim_{\delta \downarrow 0} \limsup_{n \to \infty} \left(  \frac{g(n,\delta)}{n} \right)^{\frac{\nu}{2}} = 0.
\end{align*}
In view of \eqref{TwoSuprema}, we have thus shown \eqref{DEF:AEC}.

Finally, if $(\mathcal F, \rho)$ is totally bounded, then so is $([0,1] \times \mathcal F, \tau)$, and hence the weak convergence of $\mathbb Z_n$ follows from \cite[Thm.~1.5.4 and 1.5.7]{vdVW23}.
\end{proof}

To prove Theorem \ref{UniformFCLT} it now suffices to show that its conditions imply those of Theorem \ref{UniformFCLT-general}. This is accomplished by proving that the function $\gamma$ from the former allows to construct $g$ and $h$ for the latter (see Lemma \ref{Generic-gn}). The technical details are postponed to the appendix.

\section{The smoothed sequential empirical process}\label{Sec:Smoothed}
Let us now consider the smoothed version of $ \mathbb{Z}_n$ given by
\[
\mathbb{Z}_n^s(A,f) = \frac{1}{\sqrt{n}} \sum_{i=1}^n \lambda( R_i \cap n A ) ( f(X_{i,n}) - \E{f(X_{i,n})})
\]
for $ A \in \mathcal{A} $ and $ f \in \mathcal{F} $, where $ \mathcal{A} $ is a family of regular Borel sets of $ [0,1], $ $ R_i = (i-1,i] $, $ i \ge 1$, and $ \lambda $ denotes  the Lebesgue measure. Recall that a Borel set $A$ is called regular, if $ \lambda(\partial A) = 0 $. Specifically, the choice  $ \mathcal{A} = \mathcal A_{(0, \cdot]} = \{ (0, t] : t \in [0,1] \} $, which is a Vapnik-\v{C}ervonenkis (VC) class, corresponds to the interpolated version of $ \mathbb{Z}_n(t,f)$ as defined in \eqref{DEF:SEP-SMOOTH}, where $\mathbb Z_n^s((0,t],f) = \mathbb Z_n^s(t,f)$ . 

The smoothed sequential empirical process $ \mathbb{Z}_n^s $ is indexed by $ \mathcal{A} \times \mathcal{F} $, which becomes  a semimetric space when equipped with either the semimetric
\begin{equation}\label{DEF: Semimetric_smoothed}
\tau( (A,f), (B,g) ) = \lambda( A \triangle B ) + \rho(f,g), \qquad (A,f), (B,g) \in \mathcal{A} \times \mathcal{F},
\end{equation}
or
\begin{equation}\label{DEF: Semimetric_smoothed_sqrt}
\tau_s( (A,f), (B,g) ) = \sqrt{\lambda( A \triangle B )} + \rho(f,g), \qquad (A,f), (B,g) \in \mathcal{A} \times \mathcal{F}.
\end{equation}
We will use both of them in what follows, depending on the problem at hand. Note that for $ \mathcal A = \mathcal A_{(0, \cdot]} $  the metric $ \tau $ coincides with the  semimetric from Theorem \ref{UniformFCLT}. As in Section \ref{Section:UFCLT}, the existence of an envelope $F$ allows to view $\mathbb Z_n^s$ as a map $\Omega \to \ell^\infty(\mathcal A \times \mathcal F)$ and ensures $\mathbb Z_n^s(A,f) \in L_1(P)$ for all $(A,f) \in \mathcal A \times \mathcal F$. Again, we are interested in its asymptotic tightness and weak convergence and thus seek sufficient conditions for \eqref{DEF:AEC}. 

It is intuitively clear that $\mathbb Z_n^s$ cannot be a tight map into $\ell^\infty(\mathcal A \times \mathcal F)$ if its indexing set is arbitrarily complex, or large. Therefore, the two results of this Section, Theorem \ref{UFCLT_smoothed_Lip} and \ref{UFCLT_smoothed}, consider two different kinds of scenarios. The first result restricts the complexity of $ \mathcal{A} \times \mathcal{F} $ in terms of its covering numbers and assumes separability of $\mathbb Z_n^s$. The second result, Theorem \ref{UFCLT_smoothed}, considers  the special case that $\mathcal A$ is a set of intervals and shows that, essentially under the conditions of Theorem \ref{UniformFCLT}, $\mathbb Z_n^s$ is tight in $\ell^\infty(\mathcal A \times \mathcal F)$ as soon as $\mathbb Z_n$ is in $\ell^\infty([0,1] \times \mathcal F)$.

In order to present our first result, note that smoothing increases the regularity of the process, and, depending on the choice of $ \mathcal{F} $, $\mathbb Z_n^s$ can be measurable with continuous paths. It turns out that in this case, one can essentially rely on results on the regularity of random processes indexed by metric spaces to verify \eqref{DEF:AEC}, provided the entropy of the indexing space can be suitably controlled. Specifically, Theorem~\ref{UFCLT_smoothed_Lip} below assumes that the  $\tau_s$-covering numbers, $ N(\mathcal A \times \mathcal{F}, \tau_s, \varepsilon )$, of $ \mathcal A \times \mathcal F $ are such that $ \psi^{-1} \circ N(\mathcal A \times \mathcal{F}, \tau_s, \varepsilon )$ is integrable over $ \varepsilon \in [0, \Delta_s] $, where $ \Delta_s $ is the $\tau_s$-diameter of $\mathcal A \times \mathcal F $ and $ \psi $ is a Young function satisfying  some weak regularity conditions given below. Recall that  $  N(\mathcal A \times \mathcal{F}, \tau_s, \varepsilon ) $ is the number of open $ \varepsilon $-balls with respect to the semimetric $\tau_s $ needed to cover $\mathcal A \times \mathcal{F}$. It is well known that a VC-class of sets is a polynomial class with respect to any $ L_r(Q) $-norm and any probability measure $Q$ (see \cite[Thm. 2.6.4]{vdVW23}). Analogously, if $ \mathcal F $ is a VC-subgraph class, i.e., if the family of sets $ \{ (x,t) ~|~ t < f(x)\} $ is a VC-class, and possesses a measurable envelope, then the covering numbers of $ \mathcal F $ are polynomial with respect to any $ L_r(Q) $-norm and any probability measure $Q$ (see \cite[Thm. 2.6.7]{vdVW23}). Note that if both $\mathcal A$ and $\mathcal F$ are polynomial classes, then so is $ \mathcal A \times \mathcal F $. But much larger classes with exponential growth arise in application, and here our results provide sufficient conditions when the assumptions hold for an exponential Young function such as $ \psi(x) = \exp(x^p) - 1$.

The following Theorem \ref{UFCLT_smoothed_Lip} provides sufficient conditions for \eqref{DEF:AEC} in terms of $ L_\psi $-regularity under the assumption that $\mathbb Z_n^s$ is a separable process. Especially, for $\psi(x) = x^p$, it shows that if $ \mathcal A \times \mathcal F $ is a polynomial class, then the mild condition of $L_p$-Lipschitz continuity of $\mathbb Z_n^s$ for a suitable $p > 1$ guarantees \eqref{DEF:AEC}. For the general approach we refer to \cite{LT2002}, and to \cite{MVW2013} and \cite{AS2024} for related results  concerning the asymptotic tightness and weak convergence to a Brownian motion, respectively, of the set-indexed process for stationary and nonstationary random fields. 

\begin{theorem}(Asymptotic equicontinuity under Lipschitz condition)\\
\label{UFCLT_smoothed_Lip}
Let $ \psi $ be a  nondecreasing Young function with $ \limsup_{x,y \to \infty} \psi(x) \psi(y) / \psi(c x y) < \infty $ for some constant $c$ and $ \| \cdot \|_{L_1} \le \| \cdot \|_{L_\psi} $. Assume that $\mathbb Z_n^s(A,f) $, $ (A,f) \in \mathcal A \times \mathcal F $, is separable for all $n$, and suppose that the following assumptions hold.
\begin{itemize}
\item[(i)] There is some constant $ C_1 $ such that for all $ A, B \in \mathcal A $ and $n \in \mathbb N,$
\[
\sup_{f \in \mathcal F} \| \mathbb{Z}_n^s(A,f) - \mathbb{Z}_n^s(B,f) \|_{L_\psi} \le C_1 \sqrt{\lambda(A \triangle B )}.
\]
\item[(ii)] There is some constant $C_2 $ such that for all $f, g \in \mathcal F $ and $n \in \mathbb N,$
\[
\sup_{A \in \mathcal A} \| \mathbb{Z}_n^s(A,f) - \mathbb{Z}_n^s(A,g) \|_{L_\psi} \le C_2 \rho(f,g).
\]
\item[(iii)] The covering numbers of $ \mathcal A \times \mathcal{F} $ satisfy 
\[
\int_0^{\Delta_s} \psi^{-1}( N(\mathcal A \times \mathcal{F}, \tau_s, \varepsilon/2 )) \, d \varepsilon < \infty.  
\]
\end{itemize}
Then for any $ \varepsilon > 0 $ there exists $ \eta = \eta( \varepsilon ) > 0 $ such that
\[
\E{ \sup_{\tau_s((A,f),(B,g)) < \eta} | {\mathbb{Z}}_n^s(A,f) - {\mathbb{Z}}_n^s( B,g) |} < \varepsilon
\]
which implies \eqref{DEF:AEC}, such that fidi-convergence of $\mathbb Z_n^s$ implies its weak convergence in $\ell^\infty(\mathcal A \times \mathcal F)$.

In particular, if $ \mathcal A \times \mathcal{F} $ is a polynomial class, i.e., $ N(\mathcal A \times \mathcal{F}, \tau_s, \varepsilon ) = O( \varepsilon^{-a} ) $, for some $a>0$, and $ \psi(x) = x^p$ with $ p > \max(a,1) $, then (iii) holds and there exists a continuous version $ \tilde{\mathbb Z}_n^s $ of $ \mathbb{Z}_n^s $ with almost all sample paths lying in the space $ \mathcal C_u(\mathcal A \times \mathcal F; \mathbb R ) $ of bounded and uniformly continuous functions.
\end{theorem}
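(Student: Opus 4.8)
The plan is to collapse the two coordinatewise Lipschitz hypotheses into a single Orlicz-norm increment bound with respect to $\tau_s$ and then feed this into a generic chaining/maximal inequality for separable sub-$\psi$ processes over a totally bounded index space, e.g. \cite[Thm.~2.2.4]{vdVW23}. First I would merge (i) and (ii): for $(A,f),(B,g)\in\mathcal A\times\mathcal F$, inserting the intermediate point $(B,f)$ and applying the triangle inequality for $\|\cdot\|_{L_\psi}$ gives
\[
\| \mathbb Z_n^s(A,f)-\mathbb Z_n^s(B,g)\|_{L_\psi}\le C_1\sqrt{\lambda(A\triangle B)}+C_2\rho(f,g)\le C_3\,\tau_s((A,f),(B,g)),
\]
with $C_3=\max(C_1,C_2)$. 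The decisive feature is that $C_3$ is independent of $n$, since $C_1,C_2$ are. Thus, uniformly in $n$, $\mathbb Z_n^s$ is a separable process with sub-$\psi$ increments for $\tau_s$, and the stated regularity of $\psi$ is exactly what the chaining inequality requires.

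Next I would invoke the two-parameter Orlicz maximal inequality: for all $\eta,\delta>0$,
\[
\Big\|\sup_{\tau_s((A,f),(B,g))\le\delta}|\mathbb Z_n^s(A,f)-\mathbb Z_n^s(B,g)|\Big\|_{L_\psi}\le K\Big(\int_0^{\eta}\psi^{-1}\big(D(\varepsilon,\tau_s)\big)\,d\varepsilon+\delta\,\psi^{-1}\big(D^2(\eta,\tau_s)\big)\Big),
\]
where $D(\cdot,\tau_s)$ denotes packing numbers and $K$ depends only on $\psi$ and $C_3$, hence is uniform in $n$. Using $D(\varepsilon)\le N(\varepsilon/2)$ together with assumption (iii), the entropy integral is finite and $(\mathcal A\times\mathcal F,\tau_s)$ is totally bounded. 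Given $\varepsilon>0$, I would first fix $\eta$ so small that $K\int_0^{\eta}\psi^{-1}(D(\varepsilon',\tau_s))\,d\varepsilon'<\varepsilon/2$, and then, with $\eta$ fixed and $D(\eta,\tau_s)<\infty$, choose $\delta=\eta(\varepsilon)\le\eta$ so small that $K\delta\,\psi^{-1}(D^2(\eta,\tau_s))<\varepsilon/2$. Since $\|\cdot\|_{L_1}\le\|\cdot\|_{L_\psi}$ and separability makes the supremum measurable, this yields the claimed expectation bound uniformly in $n$; Markov's inequality then converts it into \eqref{DEF:AEC}, and total boundedness of $(\mathcal A\times\mathcal F,\tau_s)$ with fidi-convergence gives weak convergence in $\ell^\infty(\mathcal A\times\mathcal F)$ by the standard characterization.

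The main obstacle is the residual term $\delta\,\psi^{-1}(D^2(\eta,\tau_s))$, which cannot be eliminated by simply letting a single radius tend to zero, because the packing numbers blow up as the scale shrinks. Balancing the two free parameters, fixing the resolution $\eta$ to control the entropy integral and only afterwards shrinking the oscillation radius $\delta$, is the crux; this is precisely where total boundedness (finiteness of $D(\eta,\tau_s)$ at a fixed positive scale) and the uniform-in-$n$ constant $C_3$ enter decisively.

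For the final assertion, with $\psi(x)=x^p$ one has $\psi^{-1}(y)=y^{1/p}$, and all stated properties of $\psi$ hold for $p>1$ (convexity, the multiplicative growth bound, and $\|\cdot\|_{L_1}\le\|\cdot\|_{L_\psi}$). If $N(\mathcal A\times\mathcal F,\tau_s,\varepsilon)=O(\varepsilon^{-a})$, then $\psi^{-1}(N(\varepsilon/2,\tau_s))=O(\varepsilon^{-a/p})$, which is integrable on $[0,\Delta_s]$ precisely because $p>a$ forces $a/p<1$; hence (iii) holds and the first part applies. Finally, the same increment bound together with the convergent entropy integral produces, via the continuity addendum to the chaining inequality (equivalently, a Borel--Cantelli argument over a countable $\tau_s$-dense set exploiting $\E{\sup_{\tau_s\le\delta}|\mathbb Z_n^s(A,f)-\mathbb Z_n^s(B,g)|}\to0$ as $\delta\downarrow0$), a modification $\tilde{\mathbb Z}_n^s$ whose sample paths lie in $\mathcal C_u(\mathcal A\times\mathcal F;\mathbb R)$.
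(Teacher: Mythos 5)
The proposal is correct and follows essentially the same route as the paper: you merge (i) and (ii) into a single $\tau_s$-Lipschitz bound in $L_\psi$ with an $n$-uniform constant $\max(C_1,C_2)$, apply the Orlicz chaining inequality of \cite[Thm.~2.2.4]{vdVW23}, fix the entropy-integral scale $\eta$ first and then shrink the oscillation radius $\delta$, and pass to \eqref{DEF:AEC} via $\|\cdot\|_{L_1}\le\|\cdot\|_{L_\psi}$ and Markov's inequality, exactly as in the paper. The only cosmetic difference is the final continuity claim, where the paper verifies the integral condition of \cite[Thm.~11.6]{LT2002} over measurable sets via H\"older's inequality while you appeal to a modulus-of-continuity argument over a countable dense set; both are standard and valid.
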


\begin{remark}
\begin{enumerate}
\item[(a)] For any $p \geq 1$, the assumptions on $\psi$ stated in Theorem \ref{UFCLT_smoothed_Lip} are fulfilled for both the maps $x \mapsto x^p$ and $x \mapsto \psi_p(x) = \exp(x^p) - 1$, $x \geq 0,$ inducing the $L_p$-norm and the exponential Orlicz norm. Especially, $$ \norm{\cdot}_{L_1} \leq \norm{\cdot}_{L_p} \leq \norm{\cdot}_{L_{\psi_p}} $$
see \cite[p. 145]{vdVW23}. 
\item[(b)] It suffices to understand separability of $ \mathbb Z_n^s(A,f) $ in the sense that \[ \sup_{\tau_s((A,f),(B,g)) < \eta} | \mathbb{Z}_n^s(A,f) - \mathbb{Z}_n^s( B,g) | \] remains a.s. invariant, if $ \mathcal{A} \times \mathcal{F} $ is replaced by a suitable countable subset. We refer to \cite[p. 179]{vdVW23} for a precise definition. 
\item[(c)]  Note that bounding the increments $ \| \mathbb{Z}_n^s(A,f) - \mathbb{Z}^s_n(B,f) \|_{L_\psi}  $ means bounding
\begin{align*}
\left\|\frac{1}{\sqrt{n}} \sum_{i=1}^n [\lambda( R_i \cap n A ) - \lambda( R_i \cap n B ) ] ( f(X_{i,n}) - \E{f(X_{i,n})})\right\|_{L_\psi}. 
\end{align*}
If $\psi(x) = x^p$, this can be achieved by suitable uniform moment bounds for weighted sums (see, e.g., \cite[Thm. 3.1 and Lem. 4.3]{GG86} for mixing series and \cite[Lem.~2]{a_AS2024} under physical dependence). {An analogous result for $\psi_2 = \exp(x^2) - 1$ has been established in \cite[Thm.~2.8]{KM25} under a subgaussian version of the physical dependence measure.} 
\end{enumerate}
\end{remark}

If $\mathcal A = \mathcal A_{(u,v]} = \{(u,v] ~|~ 0 \leq u \leq v \leq 1 \}$, some calculations reveal that the conditions of Theorem \ref{UniformFCLT} imply that $\mathbb Z_n^s$ fulfills \eqref{DEF:AEC} and the separability condition of Theorem \ref{UFCLT_smoothed_Lip} is then superfluous:

\begin{theorem}(Asymptotic equicontinuity for $\mathcal A_{(u,v]}$)\\
\label{UFCLT_smoothed}
If all conditions of Theorem \ref{UniformFCLT} are met and thus \eqref{DEF:AEC} holds for the sequential process $\mathbb Z_n$ indexed by $\mathcal F$, then \eqref{DEF:AEC} also holds for the smoothed process $\mathbb Z_n^s(A,f), (A,f) \in \mathcal A_{(u,v]} \times \mathcal F,$ for $\tau$ from \eqref{DEF: Semimetric_smoothed} and fidi-convergence of $\mathbb Z_n^s$ implies weak convergence in $\ell^\infty(\mathcal A_{(u,v]} \times \mathcal F).$ 
\end{theorem}

\begin{remark}\label{RemarkOnUFCLT_Smoothed}
It should be possible to extend the statement of Theorem \ref{UFCLT_smoothed} to classes of sets that consist of simple combinations of intervals $(u,v],$ $0 \leq u \leq v \leq 1.$ For instance, let $A \subset [0,1]$ be representable as $$ A = \bigcup_{j = 1}^K A_j, $$ where the $A_j \subset [0,1]$ are pairwise disjoint, each $A_j$ is either an interval of the form $(u,v], 0 \leq u \leq v \leq 1,$ or the empty set, and $K \in \mathbb N$ is arbitrary, but fixed. Denote the set of all such sets by $\mathcal A_K.$ By the additivity of the Lebesgue measure, for any $(A,f)$, it holds $$ \mathbb Z_n^s(A, f) = \sum_{j = 1}^K \mathbb Z_n^s(A_j, f), ~ \text{ where } A = \bigcup_{j = 1}^K A_j, $$ from which it should be possible to obtain \eqref{DEF:AEC} for $\mathbb Z_n^s$ (indexed by $\mathcal A_K \times \mathcal F)$ by using arguments similar to those used to prove Theorem \ref{UFCLT_smoothed}.
\end{remark}

\subsection{Proofs of Theorems \ref{UFCLT_smoothed_Lip} and \ref{UFCLT_smoothed}}
\begin{proof}[Proof of Theorem~\ref{UFCLT_smoothed_Lip}]
Let $ \psi $ be a Young function with $$ \limsup_{x,y \to \infty} \psi(x) \psi(y) / \psi(c x y) < \infty $$ for some constant $c$, and $ \| \cdot \|_{L_1} \le \| \cdot \|_{L_\psi} $. The first step consists in showing that the process is Lipschitz in $ L_{\psi} $. This means, for all $ (A,f), (B,g) \in \mathcal{A} \times \mathcal{F}, $  
\begin{equation}
\label{LipLp}
\| \mathbb{Z}_n^s(A,f) - \mathbb{Z}_n^s(B,g) \|_{L_\psi} \le C \tau_s( (A,f), (B, g) )
\end{equation}
for some constant $ C $. By the triangle inequality, we decompose the increment in increments with respect to each argument, 
\begin{align*}
\| \mathbb{Z}_n^s(A,f) - \mathbb{Z}_n^s(B,g) \|_{L_\psi} 
&\le \sup_{f \in \mathcal{F}} \| \mathbb{Z}_n^s(A,f) - \mathbb{Z}_n^s(B,f) \|_{L_\psi} \\
& \qquad + \sup_{B \in \mathcal{A}} \| \mathbb{Z}_n^s(B,f) - \mathbb{Z}_n^s(B,g) \|_{L_\psi} \\
& \qquad = I_n(A,B) + II_n(f,g).
\end{align*}
By (i), the first term can be bounded by
\begin{equation}
\label{SuffCond1}
I_n(A,B) \le C_1 \sqrt{\lambda(A \triangle B)},
\end{equation}
and (ii) entails that
\begin{equation}
\label{SuffCond2}
II_n(f,g) \le C_2 \rho(f,g).
\end{equation}
Consequently,
\begin{align*}
\| \mathbb{Z}_n^s(A,f) - \mathbb{Z}_n^s(B,g) \|_{L_\psi} 
&\le C ( \sqrt{\lambda(A \triangle B)} + \rho(f,g)) \\
& = C \tau_s( (A,f), (B,g) )
\end{align*}
for $ C=\max(C_1,C_2) $,  which verifies \eqref{LipLp}. By \cite[Thm. 2.2.4]{vdVW23}, for all $n$ and $\eta, \delta > 0$, 
\begin{align*}
\left\| \sup_{ \tau_s((A,f),(B,g)) < \delta} |\mathbb{Z}_n^s(A,f) - \mathbb{Z}_n^s(B,g) | \right\|_{L_\psi} &\leq K \int_0^\eta \psi^{-1}\left(  N(\mathcal{A} \times \mathcal{F}, \tau_s, \varepsilon/2) \right) \, d \varepsilon \\
&  + K \delta 
\psi^{-1}\left( N^2(\mathcal{A} \times \mathcal{F}, \tau_s, \eta/2) \right),
\end{align*}
for some constant $ K = K(\psi,C) $. Clearly, by (iii), the right side is less than any $\varepsilon > 0$ for all sufficiently small $\delta, \eta > 0$, uniformly in $n,$ from which the assertion follows in view of $\| \cdot \|_{L_1} \le \| \cdot \|_{L_\psi}. $ An application of Markov's inequality now shows that \eqref{DEF:AEC} holds true. 

To show the second assertion, first note that \[
\int_0^{\Delta_s} \psi^{-1}( N(\mathcal A \times \mathcal{F}, \tau_s, \varepsilon/2 )) \, d \varepsilon =  O \left( \int_0^{\Delta_s} \varepsilon^{-a/p} \, d \varepsilon \right) ,  
\] and thus the integral in (iii) is finite provided $ p> a. $ Next, we show that for any measurable set $ E \subset \Omega $,
\begin{equation}
\label{SuffCondLT1}
\int_E | \mathbb{Z}_n^s(A,f) - \mathbb{Z}_n^s(B,g) | \, d {\rm P}
\le \tau_s( (A,f), (B,g) ) \Prob{E} \psi^{-1}\left( \frac{1}{\Prob{E}} \right)
\end{equation}
for $ \psi(x) = x^p $, $ x \ge 0 $,  $p > 1 $. We can argue as in \cite{MVW2013} and \cite{AS2024}. This follows by H\"older's inequality with $ q=p/(p-1) $, since
\begin{align*}
\int_E | \mathbb{Z}_n^s(A,f) - \mathbb{Z}_n^s(B,g) | \, d \rm P
& \le \| \mathbb{Z}_n^s(A,f) - \mathbb{Z}_n^s(B,g) \|_{L_p} \left( \int_E \, d \rm P \right)^{1/q} \\
&\le \| \mathbb{Z}_n^s(A,f) - \mathbb{Z}_n^s(B,g) \|_{L_p} \left(\frac{1}{\Prob{E}} \right)^{-1/q} \\
& =  \| \mathbb{Z}_n^s(A,f) - \mathbb{Z}_n^s(B,g) \|_{L_p} \Prob{E} \left(\frac{1}{\Prob{E}} \right)^{1/p} \\
& \le C \tau_s( (A,f), (B,g) ) \Prob{E} \psi^{-1}\left( \frac{1}{\Prob{E}} \right),  
\end{align*}
where the constant can be absorbed into the pseudo-metric $ \tau_s$. Combined with $p > a$, this leads to the condition $p > \max(a,1).$ 
\cite[Thm.~11.6]{LT2002} now ensures the existence of a continuous version $ \tilde{\mathbb{Z}}_n^s $ with almost all sample paths bounded and uniformly continuous, i.e., in the space $ \mathcal{C}_u( \mathcal{A} \times \mathcal{F}; \mathbb{R} )$.
\end{proof}

Similar to Theorem \ref{UniformFCLT}, Theorem \ref{UFCLT_smoothed} is a consequence of the following more general version. 

\begin{theorem}(Asymptotic equicontinuity for $\mathcal A_{(u,v]}$ - general case)\\
\label{UFCLT_smoothed-general}
If the conditions of Theorem \ref{UniformFCLT-general} are met for some $\nu > 2$, then \eqref{DEF:AEC} also holds for the smoothed process $\mathbb Z_n^s(A,f), (A,f) \in \mathcal A_{(u,v]} \times \mathcal F,$ for $\tau$ from \eqref{DEF: Semimetric_smoothed}. If, in addition, $(\mathcal F, \rho)$ is totally bounded, then fidi-convergence of $\mathbb Z_n^s$ implies weak convergence in $\ell^\infty(\mathcal A_{(u,v]} \times \mathcal F).$ 
\end{theorem}
As the proof of Theorem \ref{UniformFCLT} shows that its conditions imply those of Theorem \ref{UniformFCLT-general}, Theorem \ref{UFCLT_smoothed} is now an immediate consequence of $(\mathcal A_{(u,v]}, d_\lambda)$ being totally bounded for $d_\lambda(A,B) = \lambda(A \triangle B)$. This can be seen by covering $\mathcal A_{(u,v]}$ with the set of all intervals $(i/m, j/m]$, $0 \leq i < j \leq m, m \in \mathbb N.$

	\section{Strongly mixing arrays}
	\label{Section:Examples-NEU}
	Let us now study the sequential process and its smoothed analogue under strong mixing conditions. We extend results for stationary strongly mixing sequences obtained in \cite{MO20} and \cite{H05} and thereby provide novel results on sequential processes indexed by general function classes for nonstationary arrays that do not satisfy the restrictive $\beta$-mixing conditions imposed in \cite{PN25} (see, e.g., the introduction of \cite{BU15} for some examples of strongly mixing sequences that fail to be $\beta$-mixing). The main results of this section are Theorem \ref{SupremumInequalityUnderMixing} and \ref{SupremumInequalityUnderMixing-geometric} which ensure \eqref{DEF:AEC} under a combination of moment-, mixing- and bracketing conditions. 
	
	To proceed, we need to introduce further notation. For $t \in \mathbb N$ and $k = 1,...,n-t$ let $\sigma_{n, 1,k}^X$ and $\sigma_{n,t+k,n}^X$ be the $\sigma$-fields generated by $(X_{1,n},...,X_{k,n})$ and $(X_{t+k,n},...,X_{n,n})$. Recall the definition of the strong mixing coefficients,
	\begin{align*}
	\alpha^X_n(t) &= \begin{cases}
		\underset{1 \leq k \leq n-t}{\sup} ~ \underset{A \in \sigma_{n, 1,k}^X, B \in \sigma_{n,t+k,n}^X}{\sup} \left|P(A \cap B) - P(A)P(B) \right|, ~ t \leq n-1,\\
		0, \text{ else,} 
	\end{cases} \\
	\alpha^X(t) &= \begin{cases}
		\underset{n \in \mathbb N}{\sup} ~\alpha_n^X(t), ~t \in \mathbb N,\\
		1, ~ t = 0.
		\end{cases} \end{align*}
		$(X_{i,n})$ is said to be ``strongly mixing'', if $\alpha^X(t) \to 0$ as $t \to \infty.$ 
		
		Let us also recall the definition of the bracketing numbers. For a seminorm $\rho$ on $\mathcal F$ and $\varepsilon > 0$, let $N_{[]}(\varepsilon, \mathcal F, \rho)$ denote the smallest integer for which, i), there exists $\mathcal J \subset \mathcal F$ and a set $\mathcal K$ of maps $b: \mathcal X \to \mathbb R$ with $\#(\mathcal J) = \#(\mathcal K) = N_{[]}(\varepsilon, \mathcal F, \rho)$; ii) for each $b \in \mathcal K$, $\rho(b) \leq \varepsilon$, and for each $f \in \mathcal F,$ there exist $a \in \mathcal J$ and $b \in \mathcal K$ with $ |f - a| \leq b $ (pointwise). Below, we work with the family of seminorms defined by $$\rho_p(f) = \sup_{1 \leq t \leq n, n \in \mathbb N} \norm{f(X_{t,n})}_{L_p}, ~ f \in \mathcal F, p \geq 1.$$ 
		
		Our first result generalizes \cite[Thm.~2.5]{MO20} and provides bounds for the moduli of continuity $\norm{ S_{n,i,j} }_{\mathcal F_\delta}$ under a combination of algebraic decay conditions on the mixing coefficients, bracketing- and moment conditions. Its proof uses some arguments from the latter reference and the proof of \cite[Thm.~2.2.4]{vdVW23}. 
		
		\begin{theorem}(Algebraic mixing conditions)\\
	\label{SupremumInequalityUnderMixing}
	Let $\mathcal F$ be a set of Borel maps $\mathcal X \to \mathbb R$ and assume the following conditions to hold:
	\begin{itemize}
		\item[(i)]  There exist $\lambda> 0$ and an even integer $\nu \geq 2$ such that $$\zeta(\alpha^X,\lambda,\nu) = \sum_{s = 1}^\infty s^{\nu-2} \alpha^X(s)^{\frac{\lambda}{2+\lambda}} < \infty.$$
		
		\item[(ii)] For $\nu$ and $\lambda$ from (i), it holds $$ \int_0^1 \varepsilon^{-\frac{\lambda}{2+\lambda}} N_{[]}^{\frac{1}{\nu}}(\varepsilon, \mathcal F, \rho_2) d\varepsilon < \infty, $$ and for each $\varepsilon > 0$, the corresponding  ($\rho_2$-)set $\mathcal K$ can be chosen such that $$ \sup_{1 \leq t \leq n, n \in \mathbb N} \E{|b(X_{t,n})|^{l \frac{2+\lambda}{2}}}^{\frac12} \leq \varepsilon ~ \text{ for all } l = 2,...,\nu \text{ and } b \in \mathcal K. $$ 
	\end{itemize}
	
	Let $\rho(f) := \rho_{\nu(2+\lambda)/2}(f).$ There exists $\kappa > 0$ and a constant $C \geq 0$ that only depends on $\nu, \lambda$ and the mixing coefficients $(\alpha^X(t))_{t \in \mathbb N}$, such that, for each $\eta, \delta > 0$, $n \in \mathbb N$ and $1 \leq i \leq j \leq n$, with $m = j-i+1$, we have 
	\begin{align*}
		& \EOuter{ \sup_{\rho(f-g) \leq \delta} \left|S_{n,i,j}(f) - S_{n,i,j}(g)\right|^\nu} \nonumber \\  \label{BoundAECMixing}
		&\leq C \left[ m \left(N_{[]}^{\frac{2}{\nu}}\left(\eta, \mathcal F, \rho_2 \right) \left(m^{-\kappa} + \delta + \delta^{\frac{\nu}{2}} \right) + \int_0^{\eta} N_{[]}^\frac{1}{\nu}(\varepsilon, \mathcal F, \rho_2) \varepsilon^{-\frac{\lambda}{2+\lambda}} d\varepsilon \right)^2 \right]^{\frac{\nu}{2}}.
	\end{align*}  
	\end{theorem}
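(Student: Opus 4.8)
The overall plan is to prove the moment bound \eqref{BoundAECMixing} by a bracketing chaining argument resting on the finite-class maximal inequality \eqref{BoundforMaxMixing}, following the strategy of \cite[Thm.~2.5]{MO20} and the functional form of \cite[Thm.~2.2.4]{vdVW23}, but organised so that all constants stay uniform in $1 \le i \le j \le n$ and $n$ (which is automatic, as Lemma \ref{MaximalInequalityUnderMixing} and \eqref{BoundforMaxMixing} are already uniform in $i,j,n$). Exploiting that $S_{n,i,j}^{Z^o}$ is linear in its argument, it suffices to control $\sup_{\rho(f-g)\le\delta}|S_{n,i,j}^{Z^o}(f-g)|$ in outer $L_\nu$-norm. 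First I would fix $\eta$, introduce the geometric scales $\eta_q=\eta 2^{-q}$, and at each scale extract from (A2) approximating and bounding classes $(\mathcal J_q,\mathcal K_q)$ of size $N_q=N_{[]}(\eta_q,\mathcal F,\rho_2)$, with $|f-a_qf|\le b_qf$ and $b_qf\in\mathcal K_q$. The decisive preparatory device is to choose the approximations \emph{consistently}, so that the coarser approximant $a_{q-1}f$ is a fixed function of the finer one $a_qf$; this ensures that each chaining link $a_qf-a_{q-1}f$ takes at most $N_q$ distinct values as $f$ ranges over $\mathcal F$, which is precisely what produces the single entropy power $N^{1/\nu}$ (rather than $N^{2/\nu}$) inside the integral of \eqref{BoundAECMixing}.

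Fixing a finest scale $Q=Q(m)$ and abbreviating $S^{Z^o}(\cdot)=S_{n,i,j}^{Z^o}(\cdot)$, I would split, for each admissible pair,
\begin{align*}
S_{n,i,j}^{Z^o}(f-g) &= \big[S^{Z^o}(a_0f)-S^{Z^o}(a_0g)\big] + \big[S^{Z^o}(f-a_Qf)-S^{Z^o}(g-a_Qg)\big] \\
&\quad + \sum_{q=1}^{Q}\big[S^{Z^o}(a_qf-a_{q-1}f)-S^{Z^o}(a_qg-a_{q-1}g)\big],
\end{align*}
and bound the three groups separately via the outer Minkowski inequality (Lemma \ref{MinkowskiForOuter}). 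For the coarse term I would maximise $|S^{Z^o}(a_0f-a_0g)|$ over the at most $N_0^2$ differences of elements of $\mathcal J_0$ arising from $\delta$-close pairs; the constraint $\rho(f-g)\le\delta$ with $\rho=\rho_{\nu(2+\lambda)/2}$ together with the moment condition (ii) of Lemma \ref{MaximalInequalityUnderMixing} forces the admissible $\tau$ to satisfy $\tau\lesssim\delta+\delta^{\nu/2}$, the two regimes coming from the lowest and highest moment orders $l=2$ and $l=\nu$. Feeding $\#\lesssim N_0^2$ and this $\tau$ into \eqref{BoundforMaxMixing} and using $\max\{m^{-1/2},\tau\}\le m^{-\kappa}+\delta+\delta^{\nu/2}$ for any $\kappa\le 1/2$ yields the first summand $N^{2/\nu}(\eta)\sqrt m\,(m^{-\kappa}+\delta+\delta^{\nu/2})$ of \eqref{BoundAECMixing}; care is needed here to merge the slack of the level-$\eta$ brackets into this term.

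For the chaining links, condition (A2) shows each link is dominated by a bounding function at scale $\eta_{q-1}$ whose moments of order $l(2+\lambda)/2$ are $\lesssim\eta_{q-1}^2$, so the admissible $\tau_q$ in Lemma \ref{MaximalInequalityUnderMixing} is $\lesssim\eta_{q-1}^{2/(2+\lambda)}$; inserting this together with the cardinality bound $N_q$ into \eqref{BoundforMaxMixing} and summing the geometric series reproduces the entropy integral, because $\sum_q N_q^{1/\nu}\eta_q^{2/(2+\lambda)}\asymp\int_0^\eta N_{[]}^{1/\nu}(\varepsilon,\mathcal F,\rho_2)\,\varepsilon^{-\lambda/(2+\lambda)}\,d\varepsilon$, where the weight matches via the identity $2/(2+\lambda)=1-\lambda/(2+\lambda)$. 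Finally, the residual is dominated pointwise by the bounding functions $b_Qf,b_Qg\in\mathcal K_Q$; its stochastic part is handled by a union bound over $\mathcal K_Q$ through \eqref{BoundforMaxMixing}, while its deterministic centering part is estimated crudely by $m\sup_{b\in\mathcal K_Q}\sup_{t,n}\E{b(X_{t,n})}\lesssim m\,\eta_Q^{2/(2+\lambda)}$. Choosing the finest scale $\eta_Q$ as a suitable negative power of $m$, balanced against the growth of $N_Q$, makes this bias of order $\sqrt m\,m^{-\kappa}$ for some $\kappa>0$ and thus absorbs the residual into the first summand of \eqref{BoundAECMixing}.

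Collecting the three groups with Lemma \ref{MinkowskiForOuter}, raising to the power $\nu$ and factoring out $m^{\nu/2}$ gives \eqref{BoundAECMixing}. I expect the main obstacle to be the consistent-bracketing bookkeeping needed to obtain the single entropy power $N^{1/\nu}$ in the integral while retaining full uniformity in $(i,j,n)$: one must simultaneously track several moment orders of the bounding functions to pin down the correct $\tau_q$ at each scale, keep the ball-radius dependence ($\delta$ versus $\delta^{\nu/2}$, together with the slack of the coarsest brackets) cleanly separated from the scale- and $m$-dependence, and balance the truncation level $Q$ against $m$ and the entropy so that the residual bias and the sub-threshold links collapse into a single $m^{-\kappa}$ term without inflating the entropy integral.
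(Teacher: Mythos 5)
Your overall skeleton -- multi-scale brackets $\eta_q = \eta 2^{-q}$, consistently linked approximants so that each chaining link takes at most $N_q$ values and contributes $N_q^{1/\nu}$ through the union bound of Remark \ref{Remarkon_MaximalinequalityunderMixing}.(a), a truncation level chosen so that the residual bounding functions produce a bias of order $\sqrt{m}\,m^{-\kappa}$, and the identification $\sum_q N_q^{1/\nu}\eta_{q-1}^{2/(2+\lambda)} \asymp \int_0^\eta N_{[]}^{1/\nu}(\varepsilon)\varepsilon^{-\lambda/(2+\lambda)}\,d\varepsilon$ -- is exactly the paper's argument for the approximation step \eqref{ApproxByJ0}, including the case distinction on whether $\eta^{2/(2+\lambda)}$ exceeds $m^{-1/2}$. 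The place where your plan genuinely diverges, and where it breaks, is the coarse term. You propose to apply the finite-class maximal inequality to the differences $a_0f - a_0g$ of \emph{approximants}, arguing that $\rho(f-g)\le\delta$ forces the moment parameter $\tau$ in Lemma \ref{MaximalInequalityUnderMixing} to be $\lesssim \delta + \delta^{\nu/2}$. It does not: the moments of $a_0f - a_0g$ are controlled by those of $f-g$ \emph{plus} the bracket slack at scale $\eta$, since $a_0f - a_0g = (f-g) - (f-a_0f) + (g-a_0g)$ and the last two terms are only bounded by functions in $\mathcal K_0$ whose $(2+\lambda)$-th moments are of order $\eta^2$. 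This forces $\tau \gtrsim \eta^{2/(2+\lambda)}$ in addition to $\delta+\delta^{\nu/2}$, and after the union bound over the $N_0^2$ pairs you obtain a term of order $N_{[]}^{2/\nu}(\eta)\sqrt{m}\,\eta^{2/(2+\lambda)}$. This is not dominated by the right-hand side of \eqref{BoundAECMixing}: the entropy integral only controls $N_{[]}^{1/\nu}(\eta)\,\eta^{2/(2+\lambda)}$ (one power of $N^{1/\nu}$ too few), and $\eta$ is a free parameter independent of both $m$ and $\delta$, so it cannot be ``merged'' into $N_{[]}^{2/\nu}(\eta)(m^{-\kappa}+\delta+\delta^{\nu/2})$ either. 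You flag this point yourself but do not resolve it, and as stated it cannot be resolved.

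The paper's device for precisely this difficulty is different from yours and is the one missing idea: it never applies the moment bound to differences of approximants. Instead, it partitions $\mathcal F$ into the $N_0$ equivalence classes $\mathcal E_r = \{f : a_0^f = a_0^{(r)}\}$, controls the within-class oscillation by $2\sup_f|S_{n,i,j}^o(f)-S_{n,i,j}^o(a_0^f)|$ (i.e.\ by \eqref{ApproxByJ0}), and for the cross-class part selects genuine representatives $\phi_{r,s}\in\mathcal E_r$, $\psi_{s,r}\in\mathcal E_s$ that nearly minimize $\rho$ over $\mathcal E_r\times\mathcal E_s$, so that whenever the classes contain a $\delta$-close pair one has $\rho(\phi_{r,s}-\psi_{s,r})\le 2\delta$ with no bracket slack. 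Lemma \ref{MaximalInequalityUnderMixing} applied to these at most $N_0^2$ differences with $\tau = 2^\nu\max\{\delta,\delta^{\nu/2}\}$ then yields the clean first summand $N_{[]}^{2/\nu}(\eta)\sqrt{m}(m^{-1/2}+\delta+\delta^{\nu/2})$. If you replace your coarse-term step by this equivalence-class/representative argument, the rest of your proposal goes through and coincides with the paper's proof.
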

	
	The conditions (i) and (ii) of the above theorem require a trade-off between complexity of the set $\mathcal F$, moment- and decay conditions of the mixing coefficients.  Section 3 of \cite{MO20} provides two examples that satisfy all conditions of Theorem \ref{SupremumInequalityUnderMixing}. As stated in \cite{AP94}, the integral condition can be expected to hold for suitable choices of $\lambda$ and $\nu$ if $\mathcal F$ depends in a Lipschitz continuous way on a parameter $\theta$ that lives in bounded subsets of $\mathbb R^k$ for some $k \in \mathbb N$ (cf. equation (2.1) of \cite{AP94}). Similar conditions have also been imposed in \cite{BL24}, for instance. 
	
	However, it is evident that any class $\mathcal F$ that satisfies condition (ii) of Theorem \ref{SupremumInequalityUnderMixing} must be of at most polynomial complexity in some sense such as a VC-class. The following theorem, which generalizes \cite[Thm.~3]{H05} to sequential processes of nonstationary arrays, allows for exponentially growing bracketing numbers. The basic method of proof remains the same, but some extra care is needed to obtain the uniformity in the parameters $m$ and $\delta$ required in \eqref{UniformFCLT-gammabound}. 
	\begin{theorem}(Geometric mixing conditions)\\
	\label{SupremumInequalityUnderMixing-geometric}
	Let $\mathcal F$ be a set of Borel maps $\mathcal X \to \mathbb R$ that satisfies $\sup_{f \in \mathcal F} |f(x)| \leq 1$. Furthermore, suppose there exists $\beta \in (0,1)$ and $C_\beta \geq 0$ with $ \alpha^X(t) \leq C_\beta \beta^t, t \in \mathbb N_0, $ 	
	and that it holds 
	\begin{equation}\label{geometric-integral}
		\int_0^1 \log^2 N_{[]}(\varepsilon, \mathcal F, \rho_\nu) d\varepsilon < \infty
	\end{equation}  
	for some $\nu > 2.$ Then there exists $\kappa > 0$, a constant $C \geq 0$ that only depends on $\nu$ and the mixing coefficients and $\lambda,\Lambda: (0,\infty) \to [0,\infty)$ with $\Lambda(\delta) \to 0$ as $\delta \downarrow 0$ such that for each $\delta > 0$, $n \in \mathbb N$ and $1 \leq i \leq j \leq n$, with $m = j-i+1$, we have 
	\begin{align*}
		& \EOuter{ \sup_{\rho_\nu(f-g) \leq \delta} \left|S_{n,i,j}(f) - S_{n,i,j}(g)\right|^\nu} \leq C \left[ m \left( \Lambda(\delta) + \lambda(\delta) m^{-\kappa} \right)^2 \right]^{\frac{\nu}{2}}.
	\end{align*} 
	\end{theorem} 
	
	The proof of Theorem~\ref{SupremumInequalityUnderMixing-geometric} is based on the following adaption of the Rosenthal-type inequality stated in \cite[Lem.~2]{H05}, which is of independent interest.
	
	\begin{lemma}\label{MaximalInequality-geometric}
	Let $\nu > 2$ and $h: \mathcal X \to \mathbb R$ be a Borel map with $\norm{h}_\infty = \sup_{x \in \mathcal X}|h(x)| < \infty.$ Suppose there exists $\beta \in (0,1)$ and $C_\beta \geq 0$ with $ \alpha^X(t) \leq C_\beta \beta^t, t \in \mathbb N_0. $ Then, there exists a constant $C \geq 0$ that only depends on $\nu$ and the mixing coefficients such that for any $p > 2$, $n \in \mathbb N$ and $1 \leq i \leq j \leq n$, with $m = j-i+1$, it holds
	$$ \norm{S_{n,i,j}(h)}_{L_p} \leq C \sqrt{m} \left( \sqrt{p} \rho_\nu(h) + p^2 m^{-\frac12 + \frac{1}{p}}  \norm{h}_\infty \right). $$  	
	Hence, if $\mathcal H$ is a finite set of Borel maps $\mathcal X \to \mathbb R$ with $\max_{h \in \mathcal H} \norm{h}_\infty < \infty$, there exists $C_1 \geq 0$ that only depends on $\nu$ and the mixing coefficients such that 
	$$ \norm{\max_{h \in \mathcal H} |S_{n,i,j}(h)|}_{L_\nu} \leq C_1 (1 \lor \log^2 \#(\mathcal H))  \sqrt{m} \left( \max_{h \in \mathcal H} \rho_\nu(h) + m^{-\frac12 + \frac{1}{\nu}} \max_{h \in \mathcal H} \norm{h}_\infty \right). $$ 
	\end{lemma}
	
	Theorem \ref{SupremumInequalityUnderMixing} and \ref{SupremumInequalityUnderMixing-geometric} provide bounds of the form required in \eqref{UniformFCLT-gammabound}. The asymptotic tightness and weak convergence of the sequential process $\mathbb Z_n$ can now be obtained from Theorem \ref{UniformFCLT}. Thus, we obtain the following corollaries.
	
\begin{corollary}\label{FCLTApplicableUnderMixing}
Let the conditions of Theorem \ref{SupremumInequalityUnderMixing} be fulfilled for  some $\nu > 2$ and $\lambda > 0$ and suppose there is a constant $1 \leq K < \infty $ with 
\begin{equation}\label{FCLTApplicableUnderMixingMomentCondition}
\sup_{1 \leq t \leq n, n \in \mathbb N} \sup_{f \in \mathcal F} \E{|f(X_{t,n})|^{\nu\frac{2+\lambda}{2}}} \leq K.
\end{equation} 
Then \eqref{DEF:AEC} holds for $\mathbb Z_n$ for $\rho$ from Theorem \ref{SupremumInequalityUnderMixing} and fidi-convergence of $\mathbb Z_n$ implies weak convergence in $\ell^\infty([0,1]\times\mathcal F).$
\end{corollary}

\begin{corollary}\label{FCLTApplicableUnderMixing-geometric}
The assumptions of Theorem \ref{SupremumInequalityUnderMixing-geometric} imply \eqref{DEF:AEC} for $\mathbb Z_n$ for $\rho_\nu$ and fidi-convergence of $\mathbb Z_n$ implies weak convergence in $\ell^\infty([0,1]\times\mathcal F).$	
\end{corollary}

Given that, an application of Theorem \ref{UFCLT_smoothed} proves the analogous conclusion for the $\mathcal A_{(u,v]}$-indexed smoothed process $\mathbb Z_n^s$. A possible application of Theorem \ref{UFCLT_smoothed_Lip} is sketched in Section \ref{Section:Application}. 
\begin{corollary}\label{FCLTApplicableUnderMixing-Smoothed}
Under the conditions of either Corollary \ref{FCLTApplicableUnderMixing} or \ref{FCLTApplicableUnderMixing-geometric}, \eqref{DEF:AEC} holds for $\mathbb Z_n^s$ indexed by $\mathcal A_{(u,v]} \times \mathcal F$ for $\tau$ from \eqref{DEF: Semimetric_smoothed} and fidi-convergence of $\mathbb Z_n^s$ implies weak convergence in $\ell^\infty(\mathcal A_{(u,v]}\times\mathcal F).$
\end{corollary}

	\section{Application}
	\label{Section:Application}
	An important problem arising in diverse areas is to analyze whether a time series of observations has a change-point where the distribution changes. For example, depending on the application, such a change-point may represent the onset of a financial crisis, a cyber attack or a tipping point in a climate series.  
	
	Suppose one observes random vectors $X_1,...,X_n$ in $\mathbb R^D$ and wishes to test for a change in the underlying marginal distributions. For a given candidate change-point location $k$, it is natural to compare the empirical measures of the pre- and after-change period. Thus, \cite{GH} proposed the test statistic
	\begin{equation}\label{General-statistic}
		T_n^{(d)} = \sqrt{n} \max_{1 \leq k \leq n} \frac{k}{n} \frac{n-k}{n}  d\left(\mathbb P_k, \widetilde{ \mathbb P}_{n-k}  \right),
	\end{equation}
	which also has been studied by \cite{HKQ13} for i.i.d. data and in Section 4 of \cite{DDT14} for stationary sequences.
	Here, $\mathbb P_k$ is the empirical distribution of $X_1,...,X_k$, $\widetilde{\mathbb P}_{n-k}$ is that of $X_{k+1},...,X_n,$ and $d$ is any metric on the space of probability measures on the Borel sets of $\mathbb R^D$ that admits a representation of the form $$ d(\mu, \nu) = \sup_{f \in \mathcal F} \left| \int f d\mu - \int f d\nu  \right| = \sup_{f \in \mathcal F, f(0) = 0} \left| \int f d\mu - \int f d\nu  \right| $$ for some set $\mathcal F$ of maps $\mathbb R^D \to \mathbb R$. Two important examples for such metrics are the Kolmogorov distance and the family of Wasserstein distances. The latter corresponds to choosing $ \mathcal F$ as (a subset of) the class of Lipschitz continuous functions as in \cite{XH22}.
	
	By our choice of the metric $d$, we have the representation
	\begin{equation}
		\label{ReprTStat}
		T_n^{(d)} = \sup_{(t,f) \in [0,1] \times \mathcal F} \left| \mathbb Z_n(t,f) - t \mathbb Z_n(1,f) + \frac{nt - \floor{nt}}{n} \mathbb Z_n(1,f)  \right|    
	\end{equation}
	in terms of the sequential empirical process. The asymptotic distribution of $ T_n^{(d)} $ can be derived whenever the assumed model for the underlying observations ensures the fidi-convergence of $\mathbb Z_n$, since the results of this paper provide the tools to prove its tightness.  
	Here, one can rely on suitable CLTs for nonstationary processes such as \cite{BT2017} or \cite[Thm.~1]{AS2024}. Regarding the bracketing conditions of Corollary \ref{FCLTApplicableUnderMixing-geometric}, if one considers Lipschitz functions on $ [-M,M]$, $M>0$, \cite[Lem.~2]{XH22} provides a bound on the bracketing numbers in uniform norm, and arguments as thosed used to prove \cite[Thm.~2]{XH22} can then be used to bound the $\rho_\nu$-bracketing numbers. For smoother functions on unbounded domains, one can use \cite[Cor.~2.7.3]{vdVW23}. Then, by \cite[Prop.~4.1]{DDT14}, the weak convergence of $\mathbb Z_n$ implies
	\[
	T_n^{(d)} \stackrel{d}{\to} \sup_{t \in [0,1], f \in \mathcal F} \left| \mathbb Z(t,f) - t \mathbb{Z}(1,f) \right|,
	\]
	as $ n \to \infty$, for the limiting Kiefer process $ \mathbb{Z} $ that is uniquely determined by its mean and covariance function. 
	
	Note that under stationarity, the set of distributions for $ ( X_i ) $ leading to the same first and second moment structure $ \E{ f(X_1)} $ and $$ \sum_{k=1}^\infty \mathrm{Cov}( f(X_1), g(X_{k+1}) ) + \mathrm{Cov}( g(X_1), f(X_{k+1}), ~ f, g \in \mathcal{F}, $$ of the Kiefer process $ \mathbb{Z} $ cannot be distinguished by the test. But under nonstationarity, the set of indistinguishable distributions may be considerably larger, as it includes distributions such that the first and second moment structure converges sufficiently fast to a given one.
	
	One can also replace the sequential empirical process in \eqref{ReprTStat} by its smoothed $\mathcal A \times \mathcal F$-indexed version $\mathbb Z_n^s$ and make use of our Theorems \ref{UFCLT_smoothed_Lip} and \ref{UFCLT_smoothed}. To bound the weighted sums in $L_{\psi_2}$-norm required to verify the $ L_{\psi_2} $-Lipschitz property, one can rely on the techniques of \cite[Thm.~2.8]{KM25}.

	\section{Discussion and Outlook}\label{Section:Discussion} 
	The comprehensive theory of asymptotic tightness and weak convergence of the sequential empirical process  and its smoothed set-indexed analogue for nonstationary time series developed in this paper provides sufficient conditions in terms of abstract moment bounds and regularity conditions imposed on the family $ \mathcal{F}$. A key tool for verifying the tightness of $\mathbb Z_n$ and $\mathbb Z_n^s$ is a new maximal inequality for nonmeasurable processes. Alternatively, under a measurability condition, the tightness of $\mathbb Z_n^s$ can also be derived from Lipschitz properties. 
	These results avoid explicit dependency assumptions and can therefore be specialized to different notions of weak dependence. That has been exemplified in detail for strongly mixing nonstationary arrays by extending known results for the empirical process. We have shown that conditions implying the weak convergence of the empirical process need only be slightly strengthened to imply the weak convergence of the sequential process $\mathbb Z_n$ and a certain smoothed version. This enlarges the scope of applicability to decision procedures which can be represented in terms of the sequential empirical process, as illustrated in our change-point testing example. 
	
	To use those results in applications, one needs to estimate the covariance function of the limiting Kiefer process to simulate critical values, or rely on a suitable bootstrap procedure. There are only few results in this direction under nonstationarity. For nonstationary Bernoulli shifts \cite{MS23} studies a wild bootstrap for unweighted partial sums and \cite{a_AS2024} for localized partial sums of spectral statistics. \cite{BL24} consider a block bootstrap for a class of localized averages and \cite{PN25} provide a multiplier Bootstrap under $\beta$-mixing conditions.  Extending the general setting considered in \cite{PR21}, which studies processes allowing for a representation as Bernoulli shifts under physical dependence conditions, to sequential processes - including a consistent multiplier bootstrap to provide applicable approximations - is subject of current research.

\section*{Acknowledgements} The authors gratefully
acknowledge financial support from Deutsche Forschungsgemeinschaft (DFG, grant
STE 1034/13-1).\\

\bibliographystyle{apalike}
\bibliography{ref.bib}

\section{Appendix}\label{Section:Appendix}
\subsection{Preliminaries on outer expectations}\label{Section:Sub-Preliminiaries}

We start by recalling some elementary facts related to the outer expectation which are needed in the proof of Proposition \ref{MSSAdaption_general}. For a thorough introduction to the theory of outer expectations and probabilities, see Section 1.2 in \cite{vdVW23}.

For any two maps $Y,X: \Omega \to \mathbb R$, $\EOuter{Y} = \E{Y}$, if $Y$ is measurable, and $Y \leq X$ implies $\EOuter{Y} \leq \EOuter{X}$, i.e. the outer expectation is monotonic in its ``integrand'', both of which follow directly from the definition. A very useful, yet less obvious feature is that there always exists the so-called measurable cover function $Y^*$, a measurable map $Y^*: \Omega \to \ER$ that fulfills $Y \leq Y^*$ and $\EOuter{Y} = \E{Y^*}$, provided the latter expectation exists in $\ER$, the extended real line \cite[Lem.~1.2.1]{vdVW23}. Measurable covers have many useful properties that hold irrespective of the underlying probability space (see, e.g., \cite[Lem.~1.2.2]{vdVW23} or \cite[Lem.~6.8]{KO07}) and the identity $\EOuter{Y} = \E{Y^*}$ can then be used to deduce properties of outer integrals. Two further properties that will be needed throughout are the following: Firstly, $\EOuter{.}$ is subadditive in the sense that 
\begin{equation}\label{OuterExpectationSubAdd}
	\EOuter{|X|+|Y|} \leq \EOuter{|X|} + \EOuter{|Y|},
\end{equation}
and secondly, for any $a\in \mathbb R$, it holds 
\begin{align}\label{OuterExpectationHomogeneous}
	\EOuter{|aY|} = |a|\EOuter{|Y|},
\end{align}
i.e. $\EOuter{.}$ exhibits a certain homogeneity-property. Both statements can be concluded from \cite[Lem.~1.2.1 and 1.2.2]{vdVW23}. And finally, there also exist versions of Markov's, H{\"o}lder's and Minkowski's inequalities for outer expectations and probabilities. The first of these three results is proven in \cite[Lem.~6.10]{KO07}. As we were unable to find proofs for the latter two in the literature, we briefly state and prove them below. In particular, note that due to the Minkowski-analogue and the homogeneity-property stated in \eqref{OuterExpectationHomogeneous}, it makes sense to introduce the family of seminorms given by 
\[\norm{Y}_{L_p}^* = (\EOuter{|Y|^p})^{1/p}, ~ p \in [1,\infty).
\]

\begin{lemma}[H{\"o}lder's inequality]\label{HoelderForOuter}
	Let $X,Y: \Omega \to \mathbb R$ be arbitrary maps. If there exist $p,q > 1$ with $p^{-1} + q^{-1} = 1$ and $\EOuter{|X|^p}, \EOuter{|Y|^q} < \infty$, we have 
	$$ \EOuter{|XY|} \leq \norm{X}_{L_p}^* \norm{Y}_{L_q}^*.$$
\end{lemma}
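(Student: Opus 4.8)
The plan is to prove Hölder's inequality for outer expectations by reducing it to the classical Hölder inequality via the measurable cover function. The key observation is that the measurable cover behaves well with respect to products and powers, so I would first use these structural properties to transfer the problem from the nonmeasurable maps $|X|$ and $|Y|$ to their measurable covers, where the ordinary Hölder inequality applies.

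First I would recall from the preliminaries that for any map $Z: \Omega \to \mathbb R$ there exists a measurable cover $Z^*$ with $Z \leq Z^*$ and $\EOuter{Z} = \E{Z^*}$ whenever the latter exists. I would apply this to $|X|^p$ and $|Y|^q$, denoting their covers by $(|X|^p)^*$ and $(|Y|^q)^*$. The crucial step is to control the cover of the product $|XY|$. Using the standard property of measurable covers that $(UV)^* \le U^* V^*$ (pointwise, a.s.) for nonnegative $U,V$ — which is among the facts collected in \cite[Lem.~1.2.2]{vdVW23} — together with the identity $(|Z|^r)^* = (|Z|^*)^r$ for $r > 0$, I would obtain the pointwise bound
\[
(|XY|)^* \le (|X|)^* (|Y|)^* = \left((|X|^p)^*\right)^{1/p} \left((|Y|^q)^*\right)^{1/q}
\]
almost surely. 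The equality here exploits that raising to a fixed positive power commutes with taking covers.

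With this pointwise domination in hand, the plan is to integrate and apply the classical Hölder inequality to the measurable functions $\left((|X|^p)^*\right)^{1/p}$ and $\left((|Y|^q)^*\right)^{1/q}$, giving
\[
\EOuter{|XY|} = \E{(|XY|)^*} \le \E{\left((|X|^p)^*\right)^{1/p}\left((|Y|^q)^*\right)^{1/q}} \le \left(\E{(|X|^p)^*}\right)^{1/p}\left(\E{(|Y|^q)^*}\right)^{1/q}.
\]
Re-expressing the right-hand side via $\E{(|X|^p)^*} = \EOuter{|X|^p} = (\norm{X}_{L_p}^*)^p$ and analogously for $Y$ yields exactly the claimed bound $\EOuter{|XY|} \le \norm{X}_{L_p}^*\,\norm{Y}_{L_q}^*$. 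The finiteness assumptions $\EOuter{|X|^p}, \EOuter{|Y|^q} < \infty$ guarantee that all the covers are integrable and the expectations exist, so the chain of inequalities is legitimate.

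The main obstacle I anticipate is justifying the cover-product inequality $(|XY|)^* \le (|X|)^*(|Y|)^*$ and the power identity $(|Z|^r)^* = (|Z|^*)^r$ at the required level of rigor. These are exactly the kind of measurable-cover manipulations that hold a.s. but whose careful verification requires invoking the right clause of \cite[Lem.~1.2.2]{vdVW23}; the power identity follows because $z \mapsto z^r$ is a strictly increasing bijection on $[0,\infty)$, so it commutes with the essentially-infimum construction defining the cover, while the product inequality follows from monotonicity of the cover and the fact that $(|X|)^*(|Y|)^*$ is a measurable majorant of $|XY|$. Once these two facts are established the rest is a routine application of classical Hölder, so the genuine work is entirely in the cover-calculus preamble rather than in the final estimation.
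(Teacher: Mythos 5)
Your proposal is correct and follows essentially the same route as the paper: both arguments pass to measurable covers, use that the composition with the nondecreasing continuous map $x \mapsto x^r$ commutes with taking covers so that $(|X|^p)^* = (|X|^*)^p$ a.s.\ (the paper cites \cite[Lem.~6.8]{KO07} for this), bound $\EOuter{|XY|}$ by $\E{|X|^*|Y|^*}$ via monotonicity since the product of covers is a measurable majorant of $|XY|$, and finish with the classical H\"older inequality. The only cosmetic difference is that the paper does not introduce $(|XY|)^*$ at all, going directly from $|XY| \le |X|^*|Y|^*$ to the expectation bound.
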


\begin{proof}[Proof of Lemma \ref{HoelderForOuter}]
	Denote by $|X|^*$ and $|Y|^*$ measurable covers of $|X|$ and $|Y|$. Since $|X| \leq |X|^*$ and $|Y| \leq |Y|^*$ and since the cover functions are measurable, we have $$ \EOuter{|XY|} \leq \EOuter{|X|^* |Y|^*} = \E{|X|^* |Y|^*}.$$
	Define, for $r \geq 1$, the family of maps $\phi_r: \mathbb R \to \mathbb R$ by $$ \phi_r(x) = \begin{cases*}
		0, ~ x < 0,\\
		x^r, ~ x \geq 0,
	\end{cases*}$$ and extend each $\phi_r$ continuously to $\ER$ by setting $\phi_r(\infty) = \infty$ and $\phi_r(-\infty) = 0 .$ Then $\phi_p$ and $\phi_q$ are nondecreasing and continuous on $[-\infty,\infty)$, and hence part A of \cite[Lem.~6.8]{KO07} implies 
	\begin{equation}\label{PowerCommutesWithCover-p}
		(|X|^p)^* = (\phi_p(|X|))^* = \phi_p(|X|^*) = (|X|^*)^p
	\end{equation}
	and
	\begin{equation}\label{PowerCommutesWithCover-q}
		(|Y|^q)^* = (\phi_q(|Y|))^* = \phi_q(|Y|^*) = (|Y|^*)^q
	\end{equation}
	almost surely. This implies $|X|^* \in L_p(P)$ and $|Y|^* \in L_q(P)$, since $\E{ (|X|^*)^p } = \E{(|X|^p)^*} =  \EOuter{|X|^p} < \infty$ and, analogously, $\E{ (|Y|^*)^q } = \EOuter{|Y|^q} < \infty$. Hence, by H{\"o}lder's inequality and \eqref{PowerCommutesWithCover-p} and \eqref{PowerCommutesWithCover-q}, we obtain
	$$ \E{|X|^* |Y|^*} \leq \left( \E{(|X|^*)^p} \right)^{\frac{1}{p}} \left( \E{(|Y|^*)^q} \right)^{\frac{1}{q}} = \left( \EOuter{|X|^p} \right)^{\frac{1}{p}} \left( \EOuter{|Y|^q} \right)^{\frac{1}{q}},$$ which concludes the proof.
\end{proof}

\begin{lemma}[Minkowski's inequality]\label{MinkowskiForOuter}
	Let $X,Y: \Omega \to \mathbb R$ be arbitrary maps and let $p \in [1, \infty)$. If  $\EOuter{|X|^p}, \EOuter{|Y|^p} < \infty$, we have 
	$$ \norm{ X + Y }_{L_p}^* \leq \norm{ X }_{L_p}^* + \norm{ Y }_{L_p}^*.$$
\end{lemma}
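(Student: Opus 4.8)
The plan is to mirror the proof of the classical Minkowski inequality, but with the ordinary expectation replaced throughout by the outer expectation, using measurable cover functions to reduce everything to genuine measurable integrands. First I would dispose of the trivial case $p=1$, where the claim $\EOuter{|X+Y|} \leq \EOuter{|X|} + \EOuter{|Y|}$ follows immediately from the pointwise bound $|X+Y| \leq |X| + |Y|$, monotonicity of $\EOuter{\cdot}$, and the subadditivity property \eqref{OuterExpectationSubAdd}. So I would assume $p > 1$ from here on.

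For $p > 1$, I would introduce measurable covers $|X|^*$ and $|Y|^*$ of $|X|$ and $|Y|$, as in the proof of Lemma \ref{HoelderForOuter}. The key observation is that $|X+Y| \leq |X| + |Y| \leq |X|^* + |Y|^*$ pointwise, and the right-hand side is measurable; hence by monotonicity $\EOuter{|X+Y|^p} \leq \E{(|X|^* + |Y|^*)^p}$. Using the same argument as in \eqref{PowerCommutesWithCover-p} (i.e. part A of \cite[Lem.~6.8]{KO07}, since $x \mapsto x^p$ is continuous and nondecreasing on $[0,\infty)$), I get $\E{(|X|^*)^p} = \EOuter{|X|^p} < \infty$ and likewise for $Y$, so both covers lie in $L_p(P)$ and the ordinary Minkowski inequality applies to the measurable functions $|X|^*$ and $|Y|^*$. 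This yields
\[
\left( \E{(|X|^* + |Y|^*)^p} \right)^{1/p} \leq \left( \E{(|X|^*)^p} \right)^{1/p} + \left( \E{(|Y|^*)^p} \right)^{1/p} = \norm{X}_{L_p}^* + \norm{Y}_{L_p}^*.
\]
Chaining the two displayed bounds gives $\norm{X+Y}_{L_p}^* \leq \norm{X}_{L_p}^* + \norm{Y}_{L_p}^*$, which is the assertion.

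I do not expect a genuine obstacle here, since the whole difficulty has already been isolated and resolved in the proof of H\"older's inequality: the identity $(|X|^p)^* = (|X|^*)^p$ almost surely is precisely what lets power functions commute with the cover operation, and that is the only nontrivial ingredient. The one point requiring mild care is that monotonicity of $\EOuter{\cdot}$ must be applied to the \emph{measurable} majorant $|X|^* + |Y|^*$ rather than to $|X| + |Y|$ directly, so that the inequality $\EOuter{|X+Y|^p} \leq \E{(|X|^*+|Y|^*)^p}$ is valid and the right-hand side is an honest Lebesgue integral to which classical Minkowski applies. Beyond that, the argument is routine, and one could alternatively derive it from Lemma \ref{HoelderForOuter} via the standard duality splitting $\EOuter{|X+Y|^p} \leq \EOuter{|X|\,|X+Y|^{p-1}} + \EOuter{|Y|\,|X+Y|^{p-1}}$, though the cover-function route above is cleaner and avoids checking that $|X+Y|^{p-1}$ has a finite outer $q$-th moment.
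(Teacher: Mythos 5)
Your proof is correct, but it takes a genuinely different route from the paper's. The paper mirrors the classical textbook derivation of Minkowski from H\"older: it first checks $\EOuter{|X+Y|^p} < \infty$ via convexity, writes $\EOuter{|X+Y|^p} = \EOuter{|X+Y|\cdot|X+Y|^{p-1}}$, splits by subadditivity, applies the outer H\"older inequality (Lemma \ref{HoelderForOuter}) with $q = p/(p-1)$ to each piece, and divides through by $(\norm{X+Y}_{L_p}^*)^{p-1}$ --- exactly the ``duality splitting'' you mention and set aside at the end. Your argument instead reduces immediately to the measurable covers $|X|^*, |Y|^*$ via the pointwise bound $|X+Y| \leq |X|^* + |Y|^*$ and invokes the classical Minkowski inequality for the honest $L_p$ functions $|X|^*$ and $|Y|^*$, using the identity $(|X|^p)^* = (|X|^*)^p$ (part A of \cite[Lem.~6.8]{KO07}, as in \eqref{PowerCommutesWithCover-p}) to translate the norms back. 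Your route is arguably cleaner: it bypasses the outer H\"older inequality entirely, and it avoids both the preliminary finiteness check for $\EOuter{|X+Y|^p}$ and the ``assume $\EOuter{|X+Y|^p}>0$ without loss of generality'' caveat needed before dividing. The paper's route has the virtue of reusing Lemma \ref{HoelderForOuter}, which it needs anyway, and of running in parallel with the classical proof it cites. Both are complete; the only point worth being explicit about in your write-up is that $\E{(|X|^*)^p} = \EOuter{|X|^p} < \infty$ forces $|X|^*$ to be finite almost surely, so the classical Minkowski inequality applies without issue to the possibly extended-real-valued covers.
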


\begin{proof}[Proof of Lemma \ref{MinkowskiForOuter}]
	We proceed as in the proof of the ``classical'' Minkowski inequality (see, e.g., \cite[Thm.~7.17]{Klenke14}). For $p = 1$, the statement is a consequence of the triangle inequality and \eqref{OuterExpectationSubAdd}. For $p > 1$, we have $ |X + Y|^p \leq 2^{p-1}(|X|^p + |Y|^p)$ by convexity, which implies $\EOuter{|X+Y|^p}<\infty$ due to the properties \eqref{OuterExpectationHomogeneous} and \eqref{OuterExpectationSubAdd}. Assuming $\EOuter{|X+Y|^p} > 0$ without loss of generality, we have, by monotonicity, subadditivity and Lemma \ref{HoelderForOuter} with q = p/(p-1),
	\begin{align*}
		(\norm{X + Y}_{L_p}^*)^p = \EOuter{|X+Y|^p} &= \EOuter{|X+Y|\cdot|X+Y|^{p-1}} \\
		&\leq \EOuter{|X|\cdot|X+Y|^{p-1}} + \EOuter{|Y|\cdot|X+Y|^{p-1}} \\
		&\leq \left(\norm{X}_{L_p}^* + \norm{Y}_{L_p}^*\right) (\norm{X + Y}_{L_p}^*)^{p-1}.
	\end{align*}
	Dividing both sides by $(\norm{X + Y}_{L_p}^*)^{p-1}$ concludes the proof.
\end{proof}

\subsection{Proofs of Section \ref{Section:UFCLT}}\label{Section:Sub:Proofs2}
We start with the following generalization of Proposition \ref{MSSAdaption}.
\begin{proposition}\label{MSSAdaption_general}
	Let $\nu \geq 1$, $\Psi \neq \emptyset$ and $W_1,...,W_n: \Omega \to \ell^\infty(\Psi)$ be arbitrary processes. If there exist $\alpha > 1$ and $g: \{1,...,n\}^2 \to \mathbb R$ which fulfill
	\begin{equation}\label{SumboundMSS_general}
		\mathrm E^*\left\{ \norm{S_{i,j}^W}_\Psi^\nu \right\} \leq g^{\alpha}(i,j)
	\end{equation}
	for all $1 \leq i \leq j \leq n,$ and if it holds
	\begin{align*}
		\mathrm{(i')}&:~ g(i,j) \geq 0, \text{ for all } (i,j) \in \{1,...,n\}^2, \nonumber\\
		\mathrm{(ii')}&:~ g(i,j) \leq g(i,j+1), \text{ for all } 1 \leq i \leq j \leq n-1, \nonumber \\
		\mathrm{(iii')}&:~ g(i,j) + g(j+1,k) \leq Q g(i,k), \text{ for all } 1 \leq i \leq j < k \leq n, \nonumber
	\end{align*}
	for some $Q \in [1,2^{(\alpha-1)/\alpha})$, then there exists a constant $A$ that only depends on $\alpha, \nu,$ and $Q$ such that
	\begin{equation}\label{MaxBoundMSS_general}
		\mathrm E^* \left\{\norm{M_{1,n}^W}_\Psi^\nu \right\} \leq A g^{\alpha}(1,n).
	\end{equation}
	One may take
	\begin{equation*}
		A = \left(1 - \frac{Q^{\alpha / \nu}}{2^{(\alpha - 1)/ \nu}} \right)^{-\nu}.
	\end{equation*}
\end{proposition}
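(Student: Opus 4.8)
The plan is to adapt the measurable maximal inequality of \cite{MSS82} to the nonmeasurable setting, the whole point being to replace the ordinary moment manipulations by their outer-expectation counterparts. First I would pass to the real-valued (but generally nonmeasurable) majorants $\tilde S_{i,j} := \sup_{\psi \in \Psi} |S_{i,j}^W(\psi)|$ and $\tilde M_{i,j} := \sup_{\psi \in \Psi} |M_{i,j}^W(\psi)|$. Because the supremum over $\psi$ commutes with finite maxima and is subadditive, the elementary pointwise bound $M_{i,j}^W(\psi) \le \max\{ M_{i,m}^W(\psi), |S_{i,m}^W(\psi)| + M_{m+1,j}^W(\psi)\}$, which holds for every split $i \le m < j$ since $S_{i,k} = S_{i,m} + S_{m+1,k}$ for $k > m$, yields after taking $\sup_\psi$ and using $\tilde S_{i,m} \le \tilde M_{i,m}$ the pointwise estimate $\tilde M_{i,j} \le \tilde S_{i,m} + \max\{\tilde M_{i,m}, \tilde M_{m+1,j}\}$. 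Writing $\beta := \alpha/\nu$, the hypothesis \eqref{SumboundMSS_general} reads $\norm{\tilde S_{i,j}}_{L_\nu}^* \le g^\beta(i,j)$, and iterating the pointwise bound down to singletons together with Lemma \ref{MinkowskiForOuter} shows $\EOuter{\tilde M_{i,j}^\nu} < \infty$, so that the outer H\"older and Minkowski inequalities are applicable.

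Applying the subadditivity \eqref{OuterExpectationSubAdd}, the elementary bound $\max\{x,y\}^\nu \le x^\nu + y^\nu$, and the outer Minkowski inequality (Lemma \ref{MinkowskiForOuter}) to the pointwise estimate, I obtain the numerical recursion, with $a(i,j) := \norm{\tilde M_{i,j}}_{L_\nu}^*$,
\begin{equation*}
	a(i,j) \le \left( a(i,m)^\nu + a(m+1,j)^\nu \right)^{1/\nu} + g^\beta(i,m), \qquad i \le m < j.
\end{equation*}
This single inequality is where all the outer-expectation machinery is spent; everything following it is a deterministic induction on real numbers. The base case is $a(i,i) = \norm{\tilde S_{i,i}}_{L_\nu}^* \le g^\beta(i,i)$.

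I would then prove $a(i,j) \le B\, g^\beta(i,j)$ by strong induction on the block length $j - i + 1$, where $B := (1-r)^{-1}$ and $r := Q^{\alpha/\nu} 2^{(1-\alpha)/\nu}$; note that $r < 1$ is exactly the hypothesis $Q < 2^{(\alpha-1)/\alpha}$, and $A = B^\nu$ reproduces the claimed constant. Choosing a split point $m$ that balances the two $g$-masses, i.e. $g(i,m) \le \tfrac{Q}{2} g(i,j)$ and $g(m+1,j) \le \tfrac{Q}{2} g(i,j)$, the induction hypothesis gives $a(i,m)^\nu + a(m+1,j)^\nu \le 2 B^\nu \big(\tfrac{Q}{2} g(i,j)\big)^\alpha = B^\nu r^\nu g^\alpha(i,j)$ and $g^\beta(i,m) \le (\tfrac{Q}{2})^\beta g^\beta(i,j) \le g^\beta(i,j)$ (using $Q < 2$); inserting this into the recursion yields $a(i,j) \le (Br + 1)\, g^\beta(i,j) = B\, g^\beta(i,j)$, which closes the induction. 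Raising $a(1,n) \le B\, g^\beta(1,n)$ to the $\nu$-th power gives \eqref{MaxBoundMSS_general}.

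The main obstacle is the existence of a balancing split point for a general quasi-superadditive $g$: since condition~(ii$'$) only controls monotonicity in the right endpoint and (iii$'$) permits a suffix value $g(m+1,j)$ to be as large as $Q\, g(i,j)$, a near-balanced split need not exist when the $g$-mass concentrates on a few summands. This is precisely the combinatorial core of \cite{MSS82}, which I would invoke and adapt: when no balanced split is available, a dominant summand is peeled off and controlled directly by \eqref{SumboundMSS_general}. For the length-only $g$ of Proposition \ref{MSSAdaption}, where $g(i,j)$ depends on $j - i + 1$ alone, this difficulty disappears, since the equal-length split forces both halves to carry the same $g$-value, each $\le \tfrac{Q}{2} g(i,j)$ by (iii$'$) directly. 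A secondary point to verify throughout is that every step survives the passage to outer expectations, which is ensured by the subadditivity \eqref{OuterExpectationSubAdd}, the homogeneity \eqref{OuterExpectationHomogeneous}, and the outer Minkowski inequality of Lemma \ref{MinkowskiForOuter}.
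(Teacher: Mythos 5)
Your skeleton coincides with the paper's: split the running maximum at an index $m$, use $\max\{x,y\}\le (x^\nu+y^\nu)^{1/\nu}$ and the outer Minkowski inequality of Lemma \ref{MinkowskiForOuter}, and run an induction that closes at the fixed point $B=(1-r)^{-1}$ with $r=Q^{\alpha/\nu}2^{(1-\alpha)/\nu}$, reproducing the stated constant $A=B^\nu$. The gap sits exactly where you flag it, but it is not something you can simply ``invoke'' from \cite{MSS82} without changing your recursion: as written, $a(i,j)\le (a(i,m)^\nu+a(m+1,j)^\nu)^{1/\nu}+g^{\beta}(i,m)$ requires a split with \emph{both} $g(i,m)\le \tfrac{Q}{2}g(i,j)$ and $g(m+1,j)\le\tfrac{Q}{2}g(i,j)$, and such an $m$ need not exist. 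For instance, with $n=2$, $g(1,1)=0$ and $g(2,2)$ close to $Qg(1,2)$ (which (iii') permits), the only split $m=1$ violates the second requirement. Your side remark that the difficulty disappears for the length-only $g$ of Proposition \ref{MSSAdaption} is also not quite right: for blocks of odd length the two halves are unequal and (iii') only bounds their \emph{sum} by $Qg$, so the larger half can still exceed $\tfrac{Q}{2}g$; in the paper, Proposition \ref{MSSAdaption} is in any case deduced from the general statement rather than proved by a separate equal-length argument.

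The repair, which is what the paper implements, modifies the decomposition rather than just the choice of $m$. Take $m$ minimal with $g(i,m)\ge\tfrac{Q}{2}g(i,j)$ (it exists by (i'), (ii') and $Q<2$, with the convention $g(i,i-1)=0$); then $g(i,m-1)\le\tfrac{Q}{2}g(i,j)$ by minimality, and $g(m+1,j)\le Qg(i,j)-g(i,m)\le\tfrac{Q}{2}g(i,j)$ by (iii'). Now use the pointwise bound $\tilde M_{i,j}\le \tilde S_{i,m}+\max\{\tilde M_{i,m-1},\tilde M_{m+1,j}\}$ --- with $m-1$, not $m$ --- so that the induction hypothesis is applied to the two blocks that do satisfy the $\tfrac{Q}{2}$-bound, while the peeled-off partial sum $\tilde S_{i,m}$, which absorbs the possibly dominant $m$-th summand, is controlled directly by \eqref{SumboundMSS_general} together with the monotonicity bound $g^{\beta}(i,m)\le g^{\beta}(i,j)$; no $\tfrac{Q}{2}$-factor is needed on that term, and indeed your own final arithmetic only uses $g^{\beta}(i,m)\le g^{\beta}(i,j)$ there. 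With this change the recursion closes with the identical constant. (A minor bookkeeping point: the paper's induction is on $n$ and therefore reindexes $W_{m+1},\dots,W_n$ and $g$ before applying the hypothesis to the right block; your induction on the block length with the two-parameter quantity $a(i,j)$ sidesteps this, which is fine.)
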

\begin{proof}[Proof of Proposition \ref{MSSAdaption_general}.]
	First note that since $W_i(\omega) \in \ell^{\infty}(\Psi)$ for all $i = 1,...,n$ and $\omega \in \Omega$ by assumption, Lemma \ref{MinkowskiForOuter} is applicable to all of the sums and maxima $S_{i,j}^W, M_{i,j}^W, 1 \leq i \leq j \leq n$, which will be needed in the course of the proof. 
	
	We closely follow the proof of \cite[Thm.~3.1]{MSS82}. Since the authors only sketch their proof, we present some of the arguments in more detail for the sake of clarity. 
	
	The assertion is shown by induction over $n \in \mathbb N.$ For $n = 1$, the assertion is trivial. Therefore, let $n \geq 2$ and assume that the assertion is true for all $k \leq n-1$. Recalling that $S_{i,j}^W(\psi) = M_{i,j}^W(\psi) = 0$ for $j < i$ and all $\psi \in \Psi$, we put $g(1,0) = g(n+1,n) = 0.$ Then, as the first inequality in \eqref{MSS27} below holds for $ m = 1 $, since $ g \geq 0 $, and the second one for $m = n$, since $ Q < 2$,  we can find  $m \in \{1,...,n\}$ such that
	\begin{equation}\label{MSS27}
		g(1, m-1) \leq \frac{Q}{2} g(1,n) \leq g(1,m).
	\end{equation}
	For this $m$, we have 
	\begin{equation}\label{MSS28}
		g(m+1,n) \leq \frac{Q}{2} g(1,n),
	\end{equation}	
	which, for $1 \leq m \leq n-1$, follows from (iii') and \eqref{MSS27}, and clearly holds true for $m = n$.
	
	Now take any $\psi \in \Psi$. Arguing as in the proof of \cite[Thm.~1]{MO76}, for $m \leq k \leq n,$ we have
	\begin{equation*}
		\left|S_{1,k}^W(\psi) \right| \leq \left|S_{1,m}^W(\psi) \right| + M_{m+1,k}^W(\psi) \leq \left|S_{1,m}^W(\psi) \right| + M_{m+1,n}^W(\psi),
	\end{equation*}
	and for $1 \leq k \leq m-1$, it holds $|S_{1,k}^W(\psi)| \leq M_{1,m-1}^W(\psi) \leq |S_{1,m}^W(\psi) | + M_{1,m-1}^W(\psi).$ Hence, we obtain 
	\begin{align*}
		M_{1,n}^W(\psi) &\leq \left|S_{1,m}^W(\psi) \right| + \max \left\{ M_{1,m-1}^W(\psi), M_{m+1,n}^W(\psi) \right\} \\
		&\leq \sup_{\psi \in \Psi} \left|S_{1,m}^W(\psi) \right| + \max \left\{ \sup_{\psi \in \Psi} M_{1,m-1}^W(\psi), \sup_{\psi \in \Psi} M_{m+1,n}^W(\psi) \right\} \\
		&\leq \sup_{\psi \in \Psi} \left|S_{1,m}^W(\psi) \right| + \left(\sup_{\psi \in \Psi} \left| M_{1,m-1}^W(\psi)\right|^\nu + \sup_{\psi \in \Psi} \left| M_{m+1,n}^W(\psi)\right|^\nu \right)^{\frac{1}{\nu}},
	\end{align*}
	where we used the estimate $|x|_\infty \leq |x|_{\nu}, x \in \mathbb R^2,$ in the last step. We conclude that
	\begin{align*}
		\sup_{\psi \in \Psi} M_{1,n}^W(\psi) 
		&\leq \sup_{\psi \in \Psi} \left|S_{1,m}^W(\psi) \right| \\
		&+ \left(\sup_{\psi \in \Psi} \left| M_{1,m-1}^W(\psi)\right|^\nu + \sup_{\psi \in \Psi} \left| M_{m+1,n}^W(\psi)\right|^\nu \right)^{\frac{1}{\nu}}
	\end{align*}
	which by Lemma \ref{MinkowskiForOuter} entails
	\begin{align*}
		\norm{\sup_{\psi \in \Psi} M_{1,n}^W(\psi)}_{L_\nu}^{*}
		&\leq \norm{\sup_{\psi \in \Psi} \left| S_{1,m}^W(\psi) \right|}_{L_\nu}^{*} \\ 
		&+ \left(\mathrm E^*\left\{\sup_{\psi \in \Psi} \left|M_{1,m-1}^W(\psi) \right|^\nu \right\} + \mathrm E^*\left\{\sup_{\psi \in \Psi} \left|M_{m+1,n}^W(\psi) \right|^\nu \right\} \right)^{\frac{1}{\nu}}.
	\end{align*}
	By assumption \eqref{SumboundMSS_general}, 
	\begin{equation*}
		\norm{\sup_{\psi \in \Psi} \left| S_{1,m}^W(\psi) \right|}_{L_\nu}^{*} \leq g^{\frac{\alpha}{\nu}}(1,m)
	\end{equation*}
	and furthermore, by the induction hypothesis and \eqref{MSS27}, 
	\begin{equation*}
		\mathrm E^*\left\{\sup_{\psi \in \Psi} \left|M_{1,m-1}^W(\psi) \right|^\nu \right\} \leq A g^\alpha(1,m-1) \leq A \left(\frac{Q}{2} \right)^\alpha g^\alpha(1,n).
	\end{equation*}
	Finally, note that if $Y_1 = W_{m+1},...,Y_{n-m} = W_{n}$ and $g_Y(i,j) = g(m+i,m+j)$, then $g_Y$ satisfies (i')-(iii') with $Q_Y = Q$ and 
	$$ \EOuter{\sup_{\psi \in \Psi} \left|S_{i,j}^Y(\psi) \right|^\nu} \leq g_Y^{\alpha}(i,j), ~ \forall ~ 1 \leq i\leq j\leq n-m. $$ Hence, we also have
	\begin{align*}
		\mathrm E^*\left\{\sup_{\psi \in \Psi} \left|M_{m+1,n}^W(\psi) \right|^\nu \right\} &= \EOuter{\sup_{\psi \in \Psi} \left|M_{1,n-m}^Y(\psi) \right|^\nu}\\ 
		&\leq A g_Y^{\alpha}(1,n-m) \\
		&= A g^{\alpha}(m+1,n) \leq A \left(\frac{Q}{2} \right)^\alpha g^{\alpha}(1,n)
	\end{align*}
	by the induction hypothesis and \eqref{MSS28}. We conclude that
	\begin{align*}
		\norm{\sup_{\psi \in \Psi} M_{1,n}^W(\psi)}_{L_\nu}^{*} &\leq g^{\frac{\alpha}{\nu}}(1,m) + \left(A \frac{Q^{\alpha}}{2^{(\alpha-1)}} g^\alpha(1,n) \right)^{\frac{1}{\nu}}\\
		&\leq g^{\frac{\alpha}{\nu}}(1,n) \left(1 + A^{\frac{1}{\nu}} \frac{Q^{\alpha/\nu}}{2^{(\alpha-1)/\nu}} \right),
	\end{align*}
	where we used that $g(1,m) \leq g(1,n)$ by assumption (ii'). Hence, for $A$ being chosen larger than or equal to 
	$$ \left(1 - \frac{Q^{\alpha / \nu}}{2^{(\alpha - 1)/ \nu}} \right)^{-\nu}, $$ we have 
	\begin{equation*}
		\norm{\sup_{\psi \in \Psi} M_{1,n}^W(\psi)}_{L_\nu}^{*} \leq A^{\frac{1}{\nu}} g^{\frac{\alpha}{\nu}}(1,n),
	\end{equation*}
	which concludes the proof.
\end{proof}
\begin{proof}[Proof of Proposition \ref{MSSAdaption}]
	The function $\tilde g: \{1,...,n\}^2 \to \mathbb R$ defined by $g(i,j) = g(j-i+1)$, $i \leq j$, and $g(i,j) = 0$ otherwise fulfills the conditions (i')-(iii') of Proposition \ref{MSSAdaption_general} (see the first remark in \cite{MSS82}, in particular equation (1.5) therein). Hence, by arguing as in the proof of Proposition \ref{MSSAdaption_general}, for $1 \leq i \leq j \leq n,$ we have
	$$ \EOuter{\sup_{\psi \in \Psi} \left|M_{i,j}^W(\psi) \right|^\nu} \leq A\tilde g^\alpha(i,j) = Ag^\alpha(j-i+1) $$ and the constant $A$ does not depend on the pair $(i,j) \in \{1,...,n\}^2$.
\end{proof}

The next result is a main ingredient of the proof of Theorem \ref{UniformFCLT}.
\begin{lemma}(Properties of $\gamma$)\\\label{Generic-gn}
	For $\nu > 2, \kappa \in (0, 1/2 - 1/\nu), C \geq 0$ and $R,J: (0,\infty) \to [0,\infty)$ let
	\begin{equation}\label{Generic-gn-Equation}
		\gamma(m, \delta) = C m \left(R(\delta) + J(\delta) m^{-\kappa} \right)^2, ~ m \in \mathbb N, \delta > 0.
	\end{equation}
	
	\begin{itemize}
		\item[(i)] For any $\delta > 0$, the pair $(\nu/2, \gamma(.,\delta))$ fulfills condition \eqref{MSS-ConditionM} with index $Q = 2^{2\kappa} \in [1,2^{1-2/\nu}).$
		
		\item[(ii)] For any $\delta > 0$, $\gamma(., \delta)$ fulfills condition \ref{item:Uniform_FCLT_Cond4} of Theorem \ref{UniformFCLT-general}.		
		
		\item[(iii)] If $\lim_{\delta \downarrow 0} R(\delta) = 0$, then $\gamma$ fulfills condition \ref{item:Uniform_FCLT_Cond3} of Theorem \ref{UniformFCLT-general}. 
	\end{itemize}
\end{lemma}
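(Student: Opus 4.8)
The plan is to fix $\delta > 0$ and expand $\gamma(\cdot,\delta)$ into a nonnegative linear combination of pure power functions, treating each summand separately. Writing $R = R(\delta)$ and $J = J(\delta)$ and expanding the square gives
\[
\gamma(m,\delta) = CR^2\, m + 2CRJ\, m^{1-\kappa} + CJ^2\, m^{1-2\kappa},
\]
where, because $\kappa \in (0, 1/2 - 1/\nu) \subset (0,1/2)$, all three exponents $1$, $1-\kappa$, $1-2\kappa$ lie in $(0,1]$ and all three coefficients are nonnegative.

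For part (i), nonnegativity and monotonicity, i.e. conditions (i)--(ii) in \eqref{MSS-ConditionM}, are immediate: each power $m \mapsto m^a$ with $a > 0$ is nondecreasing, and the coefficients are nonnegative. The substantial point is condition (iii) in \eqref{MSS-ConditionM}. Here I would invoke the elementary inequality that for any exponent $a \in (0,1]$ and integers $1 \leq i < j$,
\[
i^a + (j-i)^a \leq 2^{1-a}\, j^a,
\]
which follows by setting $i = xj$, $j-i = (1-x)j$ with $x \in (0,1)$ and maximizing the concave map $x \mapsto x^a + (1-x)^a$ over $[0,1]$, whose maximum $2^{1-a}$ is attained at $x = 1/2$. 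Applied to the three exponents this produces the constants $2^{0} = 1$, $2^{\kappa}$ and $2^{2\kappa}$, respectively. Since all three are at most $2^{2\kappa}$ and the coefficients are nonnegative, summing termwise yields $\gamma(i,\delta) + \gamma(j-i,\delta) \leq 2^{2\kappa}\,\gamma(j,\delta)$, which is condition (iii) with index $Q = 2^{2\kappa}$. It remains to verify $2^{2\kappa} \in [1, 2^{1-2/\nu})$: the lower bound holds since $\kappa > 0$, while the strict upper bound is precisely the restatement of $2\kappa < 1 - 2/\nu$, i.e. the hypothesis $\kappa < 1/2 - 1/\nu$.

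For parts (ii) and (iii) only routine limit evaluations remain. For part (ii), I would compute, for fixed $0 < \varepsilon \leq 1$,
\[
\frac{\gamma(\floor{n\varepsilon},\delta)}{n} = C\,\frac{\floor{n\varepsilon}}{n}\,\bigl(R + J\floor{n\varepsilon}^{-\kappa}\bigr)^2 \xrightarrow{\;n\to\infty\;} C\,\varepsilon\, R^2,
\]
using $\floor{n\varepsilon}/n \to \varepsilon$ and $\floor{n\varepsilon}^{-\kappa} \to 0$ (valid since $\floor{n\varepsilon} \to \infty$; the finitely many $n$ with $\floor{n\varepsilon} = 0$ are irrelevant for the $\limsup$). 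Hence condition \ref{item:Uniform_FCLT_Cond4} holds with constant $C R(\delta)^2$. For part (iii), the same computation gives $\gamma(n,\delta)/n = C(R + Jn^{-\kappa})^2 \to CR(\delta)^2$, so that $\limsup_{n\to\infty}\gamma(n,\delta)/n = CR(\delta)^2$, and letting $\delta \downarrow 0$ under the hypothesis $R(\delta) \to 0$ drives this to $0$, establishing condition \ref{item:Uniform_FCLT_Cond3}.

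The only genuinely non-routine step is the verification of condition (iii) of \eqref{MSS-ConditionM} in part (i); everything else reduces to the power inequality above and elementary limits. The mild care needed is to confirm that the worst of the three termwise constants is exactly $2^{2\kappa}$ (arising from the $J^2 m^{1-2\kappa}$ term), so that a single index $Q$ serves all summands simultaneously, and that this $Q$ still respects the admissibility window $[1, 2^{1-2/\nu})$ imposed by $\nu$.
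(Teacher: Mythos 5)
Your proposal is correct and follows essentially the same route as the paper: expand the square, reduce condition (iii) of \eqref{MSS-ConditionM} to the power inequality $x^a+y^a\le 2^{1-a}(x+y)^a$ applied termwise (the paper isolates this as Lemma \ref{HoelderConstantsBound}, proved via Jensen's inequality rather than by maximizing the concave map $x\mapsto x^a+(1-x)^a$, which is an equivalent argument), identify the worst constant $2^{2\kappa}$, and finish (ii) and (iii) by the same elementary limit computations. The only cosmetic difference is that the paper verifies monotonicity via the square-root form $\sqrt{\gamma(m,\delta)}=\sqrt{m}\,R(\delta)+J(\delta)m^{1/2-\kappa}$ instead of termwise, and bounds the $\limsup$ in (ii) by $2R(\delta)^2\varepsilon$ via Jensen rather than computing the exact limit.
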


The proof of Lemma \ref{Generic-gn} uses the following elementary result. 
\begin{lemma}\label{HoelderConstantsBound}
	Let $\delta \in (0,1)$. For $x,y \geq 0$, we have $$ x^\delta + y^\delta \leq 2^{1-\delta}(x+y)^\delta.$$
\end{lemma}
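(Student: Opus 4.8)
The plan is to exploit the concavity of the power function $t \mapsto t^\delta$ on $[0,\infty)$, which holds precisely because $\delta \in (0,1)$. Concavity immediately delivers the midpoint (two-point Jensen) inequality
\[
	\frac{x^\delta + y^\delta}{2} \leq \left(\frac{x+y}{2}\right)^\delta,
\]
valid for all $x, y \geq 0$. The right-hand side factors as $2^{-\delta}(x+y)^\delta$, so multiplying through by $2$ yields
\[
	x^\delta + y^\delta \leq 2 \cdot 2^{-\delta}(x+y)^\delta = 2^{1-\delta}(x+y)^\delta,
\]
which is exactly the claimed bound. This reduces the whole lemma to a single application of concavity.

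To make the concavity step rigorous I would first dispose of the degenerate case $x = y = 0$, where both sides equal $0$ and the inequality holds trivially; thereafter $x + y > 0$ and all quantities are well defined. For the concavity itself I would either invoke that $\phi(t) = t^\delta$ has second derivative $\delta(\delta-1)t^{\delta - 2} \leq 0$ on $(0,\infty)$ for $\delta \in (0,1)$, or, to avoid any issue at $t = 0$, pass to the equivalent normalized formulation: after setting $s = x + y > 0$ and writing $x = s u$, $y = s(1-u)$ with $u \in [0,1]$, the inequality becomes $u^\delta + (1-u)^\delta \leq 2^{1-\delta}$, and the function $u \mapsto u^\delta + (1-u)^\delta$ is concave and symmetric about $u = 1/2$, hence attains its maximum there with value $2 \cdot (1/2)^\delta = 2^{1-\delta}$.

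There is no substantial obstacle here: the result is elementary and the only points requiring mild care are the edge case $x = y = 0$ and ensuring the concavity argument is applied on a domain where $\phi$ is smooth (which the normalization sidesteps cleanly). I would present the two-point Jensen version as the main line of argument, since it is the shortest and makes the appearance of the constant $2^{1-\delta}$ transparent.
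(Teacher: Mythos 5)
Your proof is correct and is essentially the paper's argument in dual form: the paper applies Jensen's inequality to the convex map $u \mapsto u^{1/\delta}$ at the points $x^\delta, y^\delta$, whereas you apply it to the concave map $t \mapsto t^\delta$ at $x, y$ — the two are equivalent under the substitution $u = t^\delta$ and yield the identical constant $2^{1-\delta}$.
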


\begin{proof}[Proof of Lemma \ref{HoelderConstantsBound}]
	By convexity of the map $u \mapsto u^{\frac{1}{\delta}}, u \geq 0,$ and Jensen's inequality, we have
	\begin{align*}
		x^\delta + y^\delta = \left[ \left(x^\delta + y^\delta \right)^\frac{1}{\delta} \right]^\delta \leq \left[ 2^{\frac{1}{\delta} - 1} \left(x + y \right) \right]^\delta = 2^{1 - \delta} (x+y)^\delta.
	\end{align*}
\end{proof}

\begin{proof}[Proof of Lemma \ref{Generic-gn}]
	As the following calculations concern the growth and limiting behaviour of $\gamma$, it does not entail a loss of generality to assume $C = 1$.\\
	(i): Let $\delta > 0.$ Evidently, $\gamma(.,\delta)$ is nonnegative. By 
	\begin{align*}
		\sqrt{\gamma(m,\delta)} &= \sqrt{m} \left(R(\delta) + J(\delta) m^{-\kappa} \right) = \sqrt{m} R(\delta) + J(\delta) m^{\frac12 - \kappa}\\
		&\leq \sqrt{m+1} R(\delta) + J(\delta) (m+1)^{\frac12 - \kappa} = \sqrt{\gamma(m+1,\delta)}
	\end{align*}
	it is nondecreasing, since $\kappa < \frac12.$ And finally, by Lemma \ref{HoelderConstantsBound}, for $1 \leq i \leq j \leq n$, we have
	\begin{align*}
		&\gamma(i,\delta) + \gamma(j-i,\delta) \\
		&= R(\delta)^2 j + 2R(\delta) J(\delta) (i^{1-\kappa} + (j-i)^{1-\kappa}) + J(\delta)^2 (i^{1-2\kappa} + (j-i)^{1-2\kappa}) \\
		&\leq R(\delta)^2 j + 2R(\delta) J(\delta) 2^{\kappa} j^{1-\kappa} + J(\delta)^2 2^{2\kappa} j^{1-2\kappa} \\
		&\leq 2^{2\kappa} \gamma(j,\delta)
	\end{align*}
	with $Q = 2^{2\kappa} < 2^{1 - \nu/2}$ due to $\kappa < 1/2 - 1/\nu$.\\
	(ii): Let $\varepsilon,\delta > 0$. For all $n$ large enough, we have
	\begin{align*}
		\frac{\gamma(\floor{n \varepsilon},\delta)}{n} &\leq \varepsilon \left(R(\delta) + J(\delta) \floor{n \varepsilon}^{-\kappa} \right)^{2}\\ 
		&\leq 2 \varepsilon \left(R(\delta)^{2} + J(\delta)^{2} \floor{n\varepsilon}^{-2 \kappa} \right),
	\end{align*}
	by Jensen's inequality. It follows
	\begin{align*}
		\limsup_{n \to \infty} \frac{\gamma(\floor{n\varepsilon},\delta)}{n} &\leq 2R(\delta)^{2} \varepsilon  
	\end{align*}
	and $R(\delta) < \infty$ holds by assumption.\\
	(iii): Take $\varepsilon = 1$ in the last display and let $\delta \downarrow 0.$
\end{proof}

\begin{proof}[Proof of Theorem \ref{UniformFCLT}]
	In view of Lemma \ref{Generic-gn}, Theorem \ref{UniformFCLT} will now be proven if we verify the conditions \ref{item:Uniform_FCLT_Cond2} and \ref{item:Uniform_FCLT_Cond4} of Theorem \ref{UniformFCLT-general}. Let us start by noting that $\Delta_\rho$, the diameter of $\mathcal F$ with respect to $\rho$, is finite because $(\mathcal F, \rho)$ is totally bounded. By the triangle inequality and Minkowski's inequality (Lemma \ref{MinkowskiForOuter}), for each $n \in \mathbb N$ and $1 \leq i \leq j \leq n,$ we have
		\begin{align*}
			\left( \EOuter{ \norm{ S_{n,i,j} }_{\mathcal F}^\nu } \right)^{\frac{1}{\nu}} &\leq \left( \EOuter{ \norm{ S_{n,i,j} }_{\mathcal F_{\Delta_\rho}}^\nu } \right)^{\frac{1}{\nu}} + \norm{S_{n,i,j}(f_0)}_{L_\nu} \\ 
			&\leq \gamma^{\frac12}(j-i+1,\Delta_\rho) + C \sqrt{j-i+1},
		\end{align*}
		where the last inequality holds by assumption. Put $l(m) = C^2 m, m \in \mathbb N$, and observe that the pairs $(\nu/2, l)$ and $(\nu/2, \gamma(.,\Delta_\rho))$ fulfill condition \eqref{MSS-ConditionM} with indices $Q_l = 1$ and $Q_\gamma = 2^{2\kappa} \in [1,2^{1-2/\nu})$, respectively, where the former follows from direct computations, the latter from Lemma \ref{Generic-gn}. Furthermore, by Lemma \ref{HoelderConstantsBound}, we have $$ \gamma^{\frac12}(m,\Delta_\rho) + l^{\frac12}(m) \leq \sqrt{2} \left(\gamma(m,\Delta_\rho) + l(m) \right)^{\frac12}, ~ m \in \mathbb N.$$ And finally, it is easily seen that if, for a fixed $\alpha > 1$, two functions fulfill condition \eqref{MSS-ConditionM} with indices $Q_1, Q_2 \in [1, 2^{(\alpha - 1)/\alpha})$, then so does their sum with index $Q = \max\{Q_1,Q_2\}.$ Consequently, for $h(m) =  2\gamma(m,\Delta_\rho) + 2l(m), m \in \mathbb N,$ it holds 
		$$ \EOuter{ \norm{ S_{n,i,j} }_{\mathcal F}^\nu } \leq h^{\frac{\nu}{2}}(j-i+1), ~ \forall ~ 1 \leq i \leq j \leq n, $$
		and the pair $(\nu/2, h)$ fulfills condition \eqref{MSS-ConditionM} with index $Q_h = \max\{1, Q_\gamma\} = Q_\gamma \in [1,2^{1-2/\nu})$, thereby verifying condition \ref{item:Uniform_FCLT_Cond2}. And finally, for each $\varepsilon \in (0,1]$, by Lemma \ref{Generic-gn} and direct computations, it holds
		\begin{align*}
			\limsup_{n \to \infty} \frac{h(\floor{n\varepsilon})}{n} &\leq 2  \left( \limsup_{n \to \infty} \frac{\gamma(\floor{n\varepsilon}, \Delta_\rho)}{n}  + C^2 \varepsilon \right)\\ 
			&\leq 2 (C_{\Delta_\rho} + C^2) \varepsilon,
		\end{align*}
		where $C_{\Delta_\rho}$ is from part (ii) of Lemma \ref{Generic-gn}. This verifies condition \ref{item:Uniform_FCLT_Cond4} and, thereby, concludes the proof of Theorem \ref{UniformFCLT}.	
\end{proof}

\subsection{Proofs of Section~\ref{Sec:Smoothed}}
\begin{proof}[Proof of Theorem \ref{UFCLT_smoothed-general}]
	We start by noting that for $0 \leq u \leq v \leq 1$ and $1 \leq i \le n,$ we have 
	\begin{align*}
		(i-1,i] \cap (nu, nv] = \begin{cases}
			(i-1,i], ~ \floor{nu} + 2 \leq i \leq \floor{nv}, \\
			\left(nu, \min\{\floor{nu} + 1, nv\} \right], ~i = \floor{nu} + 1,\\
			\left(\max\{nu, \floor{nv} \}, nv \right], ~i = \floor{nv} + 1,\\
			\emptyset,~ \text{ else,}
	\end{cases} \end{align*}
	which implies
	\begin{align}\label{Identity: SmoothedProcess}
		\mathbb Z_n^s ((u,v], f) &= \frac{1}{\sqrt{n}} \sum_{i = \floor{nu} + 2}^{\floor{nv}} ( f(X_{i,n}) - \E{f(X_{i,n})}) \nonumber \\
		&+ \frac{\min\{\floor{nu} + 1, nv\} - nu}{\sqrt{n}} ( f(X_{\floor{nu} + 1,n}) - \E{f(X_{\floor{nu} + 1,n})}) \nonumber \\
		&+ \frac{nv - \max\{nu, \floor{nv} \}}{\sqrt{n}} ( f(X_{\floor{nv} + 1,n}) - \E{f(X_{\floor{nv} + 1,n})}).
	\end{align}
	Furthermore, for each $n \in \mathbb N$ and $\delta > 0$, 
	\begin{align}\label{TwoSuprema_Smoothed}
		&\sup_{\tau( (u,v],f), (w,z],g)) \leq \delta } \left| \mathbb Z_n^s((u,v], f) - \mathbb Z_n^s((w,z], g) \right| \nonumber \\
		&\leq \sup_{(u,v] \in \mathcal A} \sup_{\rho(f,g) \leq \delta} \left| \mathbb Z_n^s((u,v], f) - \mathbb Z_n^s((u,v], g)\right| \nonumber \\
		&+\sup_{\lambda((u,v] \triangle (w,z]) \leq \delta} \sup_{f \in \mathcal F} \left| \mathbb Z_n^s((u,v], f) - \mathbb Z_n^s((w,z], f) \right|.
	\end{align}
	To treat the first term on the right side of \eqref{TwoSuprema_Smoothed}, note that for each $0 \leq u \leq v \leq 1$ and $f,g \in \mathcal F$ with $\rho(f,g) \leq \delta$, by \eqref{Identity: SmoothedProcess} and the triangle inequality, we have 
	\begin{align*}
		& \left| \mathbb Z_n^s((u,v], f) - \mathbb Z_n^s((u,v], g)\right| \\
		&\qquad  \leq n^{-\frac12} \left| \sum_{i = \floor{nu} + 2}^{\floor{nv}} ( (f-g)(X_{i,n}) - \E{(f-g)(X_{i,n})}) \right| \\
		&\qquad \qquad + 4 n^{-\frac12} \max_{k = 1,...,n} \sup_{f \in \mathcal F} \left|f(X_{k,n}) - \E{f(X_{k,n})} \right| \\
		&\qquad \leq n^{-\frac12} \left| \sum_{i = \floor{nu} + 1}^{\floor{nv}} ( (f-g)(X_{i,n}) - \E{(f-g)(X_{i,n})}) \right|\\
		&\qquad \qquad + 6 n^{-\frac12} \max_{k = 1,...,n} \sup_{f \in \mathcal F} \left|f(X_{k,n}) - \E{f(X_{k,n})} \right|\\
		&\qquad = \left|\mathbb Z_n(v, f-g) - \mathbb Z_n(u, f-g) \right| \\
		&\qquad \qquad + 6 n^{-\frac12} \max_{k = 1,...,n} \sup_{f \in \mathcal F} \left|f(X_{k,n}) - \E{f(X_{k,n})} \right|\\
		&\qquad \leq 2 \sup_{u \in [0,1]} \sup_{\rho(f,g) \leq \delta} \left|\mathbb Z_n(u, f-g) \right| \\
		&\qquad \qquad + 6 n^{-\frac12} \max_{k = 1,...,n} \sup_{f \in \mathcal F} \left|f(X_{k,n}) - \E{f(X_{k,n})} \right|.
	\end{align*}
	The proof of Theorem \ref{UniformFCLT-general} shows that $\sup_{u \in [0,1]} \sup_{\rho(f,g) \leq \delta} \left|\mathbb Z_n(u, f-g) \right|$ converges to $0$ in outer probability as $n \to \infty$ followed by $\delta \downarrow 0.$ For the remaining term, we make use of a union bound, Markov's inequality, and condition \ref{item:Uniform_FCLT_Cond2} of Theorem \ref{UniformFCLT-general}, which gives  
	\begin{align*}
		&P^*\left( n^{-\frac12} \max_{k = 1,...,n} \sup_{f \in \mathcal F} \left|f(X_{k,n}) - \E{f(X_{k,n})} \right| > \varepsilon \right) \\
		&\leq n \max_{k = 1,...,n} P^*\left( n^{-\frac12} \sup_{f \in \mathcal F} \left|f(X_{k,n}) - \E{f(X_{k,n})} \right| > \varepsilon \right) \\
		&\leq n^{1 - \nu/2} \varepsilon^{-\nu} \max_{k = 1,...,n} \EOuter{ \sup_{f \in \mathcal F} \left|f(X_{k,n}) - \E{f(X_{k,n})} \right|^\nu}\\
		&= n^{1 - \nu/2} \varepsilon^{-\nu} \max_{k = 1,...,n} \EOuter{ \norm{ S_{n,k,k}}_{\mathcal F}^\nu}\\ 
		&\leq n^{1 - \nu/2} \varepsilon^{-\nu} h^{\frac{\nu}{2}}(1),
	\end{align*}
	for each $\varepsilon > 0.$ Since $\nu > 2$, the last term converges to $0$ as $n \to \infty$, independent of $\delta$.
	
	It remains to discuss the second term on the right hand side of \eqref{TwoSuprema_Smoothed}, i.e. to show that 
	\begin{equation}\label{SecondSup_Smoothed}
		\limsup_{n \to \infty} P^*\left(\sup_{\lambda((u,v] \triangle (w,z]) \leq \delta} \sup_{f \in \mathcal F} \left| \mathbb Z_n^s((u,v], f) - \mathbb Z_n^s((w,z], f) \right| > \varepsilon \right) \to 0,
	\end{equation} 
	as $\delta \downarrow 0,$ for each $\varepsilon > 0.$ To this end, let $\varepsilon, \delta > 0,$ $n \in \mathbb N$, $f \in \mathcal F$ and $0 \leq u \leq v \leq 1$ as well as $0 \leq w \leq z \leq 1$ such that $\lambda((u,v] \triangle (w,z]) \leq \delta$. We distinguish three cases to calculate the Lebesgue disjunction explicitly.
	
	\noindent
	\underline{Case 1:} $(u,v] \cap (w,z] = \emptyset.$\\
	We have $\lambda((u,v] \triangle (w,z]) = |u-v| + |w-z| \leq \delta$, which implies $|u-v|, |w-z| \leq \delta$. By \eqref{Identity: SmoothedProcess}  and the triangle inequality, it holds
	\begin{align*}
		&\left| \mathbb Z_n^s ((u,v], f) - \mathbb Z_n^s((w,z], f) \right|\\
		&\leq n^{-\frac12} \left| \sum_{i = \floor{nu} + 1}^{\floor{nv}} ( f(X_{i,n}) - \E{f(X_{i,n})}) - \sum_{i = \floor{nw} + 1}^{\floor{nz}} ( f(X_{i,n}) - \E{f(X_{i,n})}) \right| \\
		&+ 6 n^{-\frac12} \max_{k = 1,...,n} \sup_{f \in \mathcal F} \left|f(X_{k,n}) - \E{f(X_{k,n})} \right| \\
		&\leq \left| \mathbb Z_n(v, f) - \mathbb Z_n(u, f) \right| + \left| \mathbb Z_n(z, f) - \mathbb Z_n(w, f) \right| \\
		&+ 6 n^{-\frac12} \max_{k = 1,...,n} \sup_{f \in \mathcal F} \left|f(X_{k,n}) - \E{f(X_{k,n})} \right| \\
		&\leq 2 \sup_{|u-v| \leq \delta} \sup_{f \in \mathcal F} \left| \mathbb Z_n(v, f) - \mathbb Z_n(u, f) \right|\\
		&+ 6 n^{-\frac12} \max_{k = 1,...,n} \sup_{f \in \mathcal F} \left|f(X_{k,n}) - \E{f(X_{k,n})} \right|.
	\end{align*}
	
	\noindent
	\underline{Case 2:} $(u,v] \cap (w,z] \neq \emptyset$, but neither $(u,v] \subset (w,z]$ nor $(v,z] \subset (u,v]$.\\
	Assume, without loss of generality, that $0 \leq u \leq w \leq v \leq z.$ We then have\\ $\lambda((u,v] \triangle (w,z]) = |u-w| + |v-z| \leq \delta$, which implies $|u-w|, |v-z| \leq \delta$. By \eqref{Identity: SmoothedProcess}  and the triangle inequality, it holds
	\begin{align*}
		&\left| \mathbb Z_n^s ((u,v], f) - \mathbb Z_n^s((w,z], f) \right|\\
		&\leq n^{-\frac12} \left| \sum_{i = \floor{nu} + 2}^{\floor{nv}} ( f(X_{i,n}) - \E{f(X_{i,n})}) - \sum_{i = \floor{nw} + 2}^{\floor{nz}} ( f(X_{i,n}) - \E{f(X_{i,n})}) \right| \\
		&+ 4 n^{-\frac12} \max_{k = 1,...,n} \sup_{f \in \mathcal F} \left|f(X_{k,n}) - \E{f(X_{k,n})} \right| \\
		&= n^{-\frac12} \left| \sum_{i = \floor{nu} + 2}^{\floor{nw} + 1} ( f(X_{i,n}) - \E{f(X_{i,n})}) - \sum_{i = \floor{nv} + 1}^{\floor{nz}} ( f(X_{i,n}) - \E{f(X_{i,n})}) \right| \\
		&+ 4 n^{-\frac12} \max_{k = 1,...,n} \sup_{f \in \mathcal F} \left|f(X_{k,n}) - \E{f(X_{k,n})} \right| \\
		&\leq n^{-\frac12} \left| \sum_{i = \floor{nu} + 1}^{\floor{nw}} ( f(X_{i,n}) - \E{f(X_{i,n})}) - \sum_{i = \floor{nv} + 1}^{\floor{nz}} ( f(X_{i,n}) - \E{f(X_{i,n})}) \right| \\
		&+ 6 n^{-\frac12} \max_{k = 1,...,n} \sup_{f \in \mathcal F} \left|f(X_{k,n}) - \E{f(X_{k,n})} \right| \\
		&\leq \left| \mathbb Z_n(w, f) - \mathbb Z_n(u, f) \right| + \left| \mathbb Z_n(z, f) - \mathbb Z_n(v, f) \right| \\
		&+ 6 n^{-\frac12} \max_{k = 1,...,n} \sup_{f \in \mathcal F} \left|f(X_{k,n}) - \E{f(X_{k,n})} \right| \\
		&\leq 2 \sup_{|u-w| \leq \delta} \sup_{f \in \mathcal F} \left| \mathbb Z_n(w, f) - \mathbb Z_n(u, f) \right|\\
		&+ 6 n^{-\frac12} \max_{k = 1,...,n} \sup_{f \in \mathcal F} \left|f(X_{k,n}) - \E{f(X_{k,n})} \right|.
	\end{align*}
	
	\noindent
	\underline{Case 3:} Either $(u,v] \subset (w,z]$ or $(v,z] \subset (u,v]$.\\
	Assume, without loss of generality, that $0 \leq w \leq u \leq v \leq z.$ Again, we have \\$\lambda((u,v] \triangle (w,z]) = |u-w| + |v-z| \leq \delta$, which implies $|u-w|, |v-z| \leq \delta$. Similar arguments as in case 2 yield
	\begin{align*}
		&\left| \mathbb Z_n^s ((u,v], f) - \mathbb Z_n^s((w,z], f) \right|\\
		&\leq n^{-\frac12} \left| \sum_{i = \floor{nw} + 2}^{\floor{nu} + 1} ( f(X_{i,n}) - \E{f(X_{i,n})}) + \sum_{i = \floor{nv} + 1}^{\floor{nz}} ( f(X_{i,n}) - \E{f(X_{i,n})}) \right| \\
		&+ 4 n^{-\frac12} \max_{k = 1,...,n} \sup_{f \in \mathcal F} \left|f(X_{k,n}) - \E{f(X_{k,n})} \right| \\
		&\leq 2 \sup_{|u-w| \leq \delta} \sup_{f \in \mathcal F} \left| \mathbb Z_n(w, f) - \mathbb Z_n(u, f) \right|\\
		&+ 6 n^{-\frac12} \max_{k = 1,...,n} \sup_{f \in \mathcal F} \left|f(X_{k,n}) - \E{f(X_{k,n})} \right|.
	\end{align*}
	
	By combining the cases 1-3, we conclude 
	\begin{align*}
		&\sup_{\lambda((u,v] \triangle (w,z]) \leq \delta} \sup_{f \in \mathcal F} \left| \mathbb Z_n^s((u,v], f) - \mathbb Z_n^s((w,z], f) \right| \\
		&\leq 2 \sup_{|u-v| \leq \delta} \sup_{f \in \mathcal F} \left| \mathbb Z_n(v, f) - \mathbb Z_n(u, f) \right|\\
		&+ 6 n^{-\frac12} \max_{k = 1,...,n} \sup_{f \in \mathcal F} \left|f(X_{k,n}) - \E{f(X_{k,n})} \right|.
	\end{align*}
	The proof of Theorem \ref{UniformFCLT-general} shows that $\sup_{|u-v| \leq \delta} \sup_{f \in \mathcal F} \left| \mathbb Z_n(v, f) - \mathbb Z_n(u, f) \right|$ converges to $0$ in outer probability as $n \to \infty$ followed by $\delta \downarrow 0.$ The remaining term has already been discussed. This shows \eqref{SecondSup_Smoothed} and, thereby, \eqref{DEF:AEC}. 
	
	Finally, since $(\mathcal A, d_\lambda)$ is totally bounded, where $d_\lambda(A,B) = \lambda(A \triangle B)$, which can be seen by covering $\mathcal A$ with the set of all intervals $(i/m, j/m]$, $0 \leq i < j \leq m, m \in \mathbb N,$ the proof of Theorem \ref{UFCLT_smoothed-general} can now be completed in the same way as the proof of Theorem \ref{UniformFCLT-general}.
\end{proof}

\subsection{Proofs of Section \ref{Section:Examples-NEU}}
\subsubsection{Proof of Theorem~\ref{SupremumInequalityUnderMixing}}
We start with a moment bound for strongly mixing arrays. The following result generalizes \cite[Lem.~4.1]{MO20}.

\begin{lemma}\label{MaximalInequalityUnderMixing}
	Let $\mathcal H$ be a set of Borel maps $\mathcal X \to \mathbb R.$ Assume there exist $\lambda, \tau > 0$ and an even integer $\nu \geq 2$ such that 
	\begin{itemize}
		\item[(i)] $\zeta(\alpha^X, \lambda, \nu) = \sum_{s = 1}^\infty s^{\nu-2} \alpha^X(s)^{\lambda/(2+\lambda)} < \infty,$
		
		\item[(ii)] $\E{\left|h(X_{t,n}) - \E{h(X_{t,n})}\right|^{l \frac{2+\lambda}{2}}} \leq \tau^{2+\lambda}$, for all $h \in \mathcal H$, $l = 2,...,\nu$ and $1 \leq t \leq n, n \in \mathbb N.$ 
	\end{itemize}
	
	Then there exists a constant $C \geq 0$ that only depends on $\nu$, $\lambda$ and the mixing coefficients such that for each $n \in \mathbb N$ and $1 \leq i \leq j \leq n$, with $m = j-i+1$, we have 
	$$ \sup_{h \in \mathcal H} \norm{S_{n,i,j}(h)}_{L_\nu} \leq C \sqrt{m} \max\left\{m^{-\frac12}, \tau \right\}.$$
\end{lemma}

The proof of Lemma \ref{MaximalInequalityUnderMixing} uses the following elementary fact concerning the strong mixing coefficients. Its proof is straightforward and therefore omitted. 
\begin{lemma}\label{MixingInvarianceTransformations}
	Let $(X_{i,n})$ be a triangular array of $\mathcal X$-valued random variables and let $(h_{i,n}): \mathcal X \to \mathbb R$ be Borel maps. For the array $(Y_{i,n}) = (h_{i,n}(X_{i,n}))$, we have $\alpha_n^Y(t) \leq \alpha_n^X(t)$ and $\alpha^Y(t) \leq \alpha^X(t)$, $t \in \mathbb N_0$. 
\end{lemma}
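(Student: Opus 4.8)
The plan is to exploit the elementary fact that applying a measurable map to a random variable can only shrink the generated $\sigma$-field, together with the observation that the strong mixing coefficient is defined as a supremum over sets drawn from such $\sigma$-fields, and is therefore monotone with respect to $\sigma$-field inclusion.

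First I would fix $n \in \mathbb N$ and $t \in \{1,\dots,n-1\}$ (the cases $t = 0$ and $t \geq n$ are deferred to the end) together with an index $k \in \{1,\dots,n-t\}$. Since each $Y_{i,n} = h_{i,n}(X_{i,n})$ is a Borel measurable function of $X_{i,n}$ alone, the factorization (Doob--Dynkin) lemma yields the $\sigma$-field inclusions
$$ \sigma_{n,1,k}^Y = \sigma(Y_{1,n},\dots,Y_{k,n}) \subseteq \sigma(X_{1,n},\dots,X_{k,n}) = \sigma_{n,1,k}^X $$
and, by the same argument applied to the second block, $\sigma_{n,t+k,n}^Y \subseteq \sigma_{n,t+k,n}^X$.

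Second, because the inner supremum in the definition of $\alpha_n^Y(t)$ ranges over pairs $(A,B)$ with $A \in \sigma_{n,1,k}^Y$ and $B \in \sigma_{n,t+k,n}^Y$, restricting attention to these smaller collections can only decrease the supremum of $|P(A \cap B) - P(A)P(B)|$. Thus for every admissible $k$,
$$ \sup_{A \in \sigma_{n,1,k}^Y,\, B \in \sigma_{n,t+k,n}^Y} |P(A \cap B) - P(A)P(B)| \leq \sup_{A \in \sigma_{n,1,k}^X,\, B \in \sigma_{n,t+k,n}^X} |P(A \cap B) - P(A)P(B)|. $$
Taking the supremum over $k \in \{1,\dots,n-t\}$ on both sides preserves the inequality and gives $\alpha_n^Y(t) \leq \alpha_n^X(t)$. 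For $t \geq n$ both coefficients equal $0$ by convention, so the bound holds trivially there.

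Finally, taking the supremum over $n \in \mathbb N$ transfers the pointwise estimate to the array-wide coefficients, namely $\alpha^Y(t) = \sup_n \alpha_n^Y(t) \leq \sup_n \alpha_n^X(t) = \alpha^X(t)$ for $t \in \mathbb N$, while for $t = 0$ one has $\alpha^Y(0) = 1 = \alpha^X(0)$ directly from the definition. The only step requiring any care is the first one, the $\sigma$-field containment; but this is immediate from the measurability of the $h_{i,n}$, so no genuine obstacle arises — which is precisely why the authors classify the statement as straightforward.
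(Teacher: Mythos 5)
Your proof is correct and is exactly the straightforward argument the paper has in mind (the paper omits the proof entirely, remarking only that it is straightforward): the inclusion $\sigma(Y_{1,n},\dots,Y_{k,n}) \subseteq \sigma(X_{1,n},\dots,X_{k,n})$ plus monotonicity of the suprema defining $\alpha_n^{(\cdot)}(t)$, with the boundary cases $t \geq n$ and $t=0$ handled by the conventions in the definition. One tiny remark: the inclusion you need is just that $h_{i,n}(X_{i,n})$ is $\sigma(X_{i,n})$-measurable as a composition of measurable maps; the Doob--Dynkin lemma is the (unneeded) converse direction.
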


The proof of Lemma \ref{MaximalInequalityUnderMixing} makes repeated use of the covariance inequality stated in \cite[Lem.~4.2]{MO20} which we - for the required special case - restate here for convenience.
\begin{lemma}\label{L42Mohr}
	Let $(\xi_{i,n})$ be a strongly mixing triangular array of real random variables with mixing coefficients $(\alpha^\xi(t))_{t \in \mathbb N}$. For $n \in \mathbb N$, $m > 1$, and integers $(i_1,...,i_m)$ with $1 \leq i_1 < ... < i_m \leq n$ let $F_{n,i_1,...,i_m}$ denote the distribution function of $(\xi_{n,i_1},...,\xi_{n,i_m})$ and let $g: \mathbb R^m \to \mathbb R$ be a Borel function such that $$ \int |g(x_1,...,x_m)|^{1+\delta} dF_{n,i_1,...,i_m}(x_1,...,x_m) \leq M_n $$ and $$ \int |g(x_1,...,x_m)|^{1+\delta} dF_{n,i_1,...,i_j}(x_1,...,x_j) dF_{n,i_{j+1},...,i_m}(x_{j+1},...,x_m)\leq M_n $$ holds for some $\delta > 0.$ Then \begin{align*}
		&\bigg| \int g(x_1,...,x_m) dF_{n,i_1,...,i_m}(x_1,...,x_m)\\ 
		&- \int g(x_1,...,x_m) dF_{n,i_1,...,i_j}(x_1,...,x_j) d F_{n,i_{j+1},...,i_m}(x_{j+1},...,x_m) \bigg| \\
		&\leq 4 M_n^{\frac{1}{1+\delta}} \alpha^\xi(i_{j+1} - i_j)^{\frac{\delta}{1+\delta}}.
	\end{align*}
\end{lemma}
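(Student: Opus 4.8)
The plan is to prove the inequality by the classical truncation-and-optimisation scheme of Davydov. Write $\mu$ for the joint law of $(\xi_{n,i_1},\dots,\xi_{n,i_m})$ on $\mathbb R^m$ and $\nu = \mu_U \otimes \mu_V$ for the product of the two block-marginals, where $U=(\xi_{n,i_1},\dots,\xi_{n,i_j})$ and $V=(\xi_{n,i_{j+1}},\dots,\xi_{n,i_m})$; the two integrals in the assertion are exactly $\int g\,d\mu$ and $\int g\,d\nu$, so the claim is a bound on $\left|\int g\,d(\mu-\nu)\right|$. First I would record the one piece of information strong mixing actually supplies, namely the \emph{rectangle bound}: since $\sigma(U)$ and $\sigma(V)$ sit inside the $\sigma$-fields generated by the index blocks $\{i_1,\dots,i_j\}\subseteq\{1,\dots,i_j\}$ and $\{i_{j+1},\dots,i_m\}\subseteq\{i_{j+1},\dots,n\}$, which are separated by a gap of $i_{j+1}-i_j$, for all Borel $A\subseteq\mathbb R^j$, $B\subseteq\mathbb R^{m-j}$ we have $\left|\mu(A\times B)-\nu(A\times B)\right|=\left|P(U\in A,V\in B)-P(U\in A)P(V\in B)\right|\le\alpha^\xi(i_{j+1}-i_j)$. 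Abbreviate $a:=\alpha^\xi(i_{j+1}-i_j)$; if $a=0$ then every rectangle has equal $\mu$- and $\nu$-mass, hence $\mu=\nu$ on $\mathcal B(\mathbb R^m)$ and the left side vanishes, so I may assume $a>0$.

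Next I would truncate. For a level $T>0$ set $g_T=(g\wedge T)\vee(-T)$, so that $\norm{g_T}_\infty\le T$ and $|g-g_T|\le|g|\,\ind{|g|>T}$ pointwise. The two moment hypotheses then control the tails against \emph{both} measures by Markov's inequality,
\[
\int|g-g_T|\,d\lambda\le T^{-\delta}\int|g|^{1+\delta}\,d\lambda\le M_nT^{-\delta},\qquad \lambda\in\{\mu,\nu\},
\]
so that $\left|\int g\,d(\mu-\nu)\right|\le\left|\int g_T\,d(\mu-\nu)\right|+2M_nT^{-\delta}$.

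The heart of the argument is then to bound the bounded part $\left|\int g_T\,d(\mu-\nu)\right|$ by $2Ta$. This is the covariance inequality for a bounded, but generally \emph{non-factorising}, integrand of the two separated blocks, and it is precisely here that the rectangle bound must be upgraded to an arbitrary test function $g_T$. Viewing $\mu-\nu$ as a signed measure of total mass zero, its Hahn decomposition gives $\left|\int g_T\,d(\mu-\nu)\right|\le\norm{g_T}_\infty\,|\mu-\nu|(\mathbb R^m)=2T\,(\mu-\nu)(P)$ for the positive set $P$, and the plan is to show $(\mu-\nu)(P)\le a$ by approximating $g_T$ uniformly by simple functions adapted to measurable rectangles $A\times B$ and passing the rectangle estimate through the resulting finite linear combination. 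I expect this to be the main obstacle: a crude simple-function expansion only reproduces the weaker total-variation (absolute-regularity) constant, so the delicate point is to organise the approximation so that the strong-mixing coefficient $a$, rather than the total variation of $\mu-\nu$, governs the estimate. This is exactly the feature that makes Davydov's inequality sharp and is the reason the statement is formulated through the rectangle-based coefficient $\alpha^\xi$.

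Finally I would optimise the truncation level. Combining the two bounds yields $\left|\int g\,d(\mu-\nu)\right|\le 2Ta+2M_nT^{-\delta}$ for every $T>0$. Choosing $T=(M_n/a)^{1/(1+\delta)}$ balances the two contributions, since then $2Ta=2M_n^{1/(1+\delta)}a^{\delta/(1+\delta)}$ and $2M_nT^{-\delta}=2M_n^{1/(1+\delta)}a^{\delta/(1+\delta)}$; summing these gives the asserted bound $4M_n^{1/(1+\delta)}\alpha^\xi(i_{j+1}-i_j)^{\delta/(1+\delta)}$. The constant $4$ is exactly what the balanced choice produces from the bounded-part constant $2$ and the two tail terms, so no further sharpening of the individual steps is needed.
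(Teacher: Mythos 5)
Your outer scaffolding is the standard Davydov scheme and matches what the cited source does: the rectangle bound $|\mu(A\times B)-\nu(A\times B)|\le\alpha^\xi(i_{j+1}-i_j)$ is correctly extracted from the array's mixing coefficients, the truncation step $\int|g-g_T|\,d\lambda\le M_nT^{-\delta}$ for $\lambda\in\{\mu,\nu\}$ is right, and the balancing choice $T=(M_n/a)^{1/(1+\delta)}$ does produce the constant $4$. Note, however, that the paper itself offers no proof of this statement --- it explicitly restates it from \cite[Lem.~4.2]{MO20} ``for convenience'' --- so the benchmark is the classical Davydov--Yoshihara argument, and against that benchmark your middle step fails.

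The step you flag as ``the main obstacle'' is not an obstacle to be organised around: the claim $(\mu-\nu)(P)\le a$ for the Hahn positive set $P$ is genuinely false, because $\sup_{E \text{ Borel}}(\mu-\nu)(E)$ is by definition the absolute-regularity coefficient $\beta$ between the two blocks, and only $\alpha\le\beta$ holds --- $P$ is an arbitrary Borel set, not a rectangle, and no simple-function bookkeeping can recover $\alpha$ from rectangle data for a non-factorising integrand. Concretely, take the joint density $f(x,y)=1+\varepsilon\sum_{k=1}^{K}r_k(x)r_k(y)$ on $[0,1]^2$ with Rademacher functions $r_k$ and $\varepsilon=1/K$: Bessel's inequality gives $\alpha\le 1/(4K)$, while $\beta\asymp K^{-1/2}$, and with $g=\mathrm{sign}(f-1)$ (so $M_n=1$ for every $\delta$) the left side of the lemma is $2\beta\asymp K^{-1/2}$, exceeding the asserted bound $4\alpha^{\delta/(1+\delta)}=O(K^{-\delta/(1+\delta)})$ for $\delta>1$ and $K$ large. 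In other words, for general Borel $g$ the inequality is Yoshihara's lemma and needs $\beta$-mixing --- your total-variation route is precisely the proof of that version. The $\alpha$-version is provable, and is the only way this paper uses the lemma, when $g$ factorises across the gap, as it does in the proof of Lemma \ref{MaximalInequalityUnderMixing} where $g(x_1,\dots,x_\nu)=x_1\cdots x_\nu$: then the difference of integrals is the covariance $\mathrm{Cov}\bigl(g_1(U),g_2(V)\bigr)$, the bounded case follows from the classical inequality $|\mathrm{Cov}(\phi(U),\psi(V))|\le 4\,\|\phi\|_\infty\|\psi\|_\infty\,\alpha$, whose proof replaces $\phi(U)$ by the $\sigma(U)$-measurable variable $\mathrm{sign}\bigl(\mathrm E[\psi(V)\mid U]-\mathrm E[\psi(V)]\bigr)$ and likewise in the second block, thereby reducing everything to four rectangle probabilities, and one then truncates each factor separately and balances exactly as you do. The $\sigma(U)$-measurability of that sign is the ingredient your sketch lacks, and it is exactly what a non-product $g_T$ destroys; to repair your write-up you should either add the product-form hypothesis $g=g_1\otimes g_2$ (sufficient for every application in this paper) or strengthen the mixing assumption to absolute regularity.
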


\begin{proof}[Proof of Lemma \ref{MaximalInequalityUnderMixing}]
	We can follow the proof of \cite[Lem.~4.1]{MO20}, with some modifications and refinements. For the sake of clarity, we provide the details.
	Let $h \in \mathcal H$ and $n \in \mathbb N$. Let furthermore $1 \leq i \leq j \leq n$ and put $m = j-i+1$. As $h \in \mathcal H$ is fixed throughout the proof, we abbreviate $Z_{t,n} = h(X_{t,n}) - \E{h(X_{t,n})}$ and  $S_{n,i,j} = S_{n,i,j}(h).$
	
	It suffices to show that for all integers $\nu \geq 2$ (not necessarily even) for which the assumptions (i) and (ii) hold, there is a constant $C_0 \geq 0$ that depends only on $\nu$, $\lambda$ and and the mixing coefficients $(\alpha^X(t))_{t \in \mathbb N}$, such that\begin{equation}\label{WeakMultiplicativitiyForQ}
		\sum_{\mathbf t \in \mathbf{ T_{\nu;i,j}}} \left| \E{Z_{t_1,n},...,Z_{t_\nu,n}} \right| \leq C_0 \left(m \tau^2 + ... + (m\tau^2)^{\floor{\frac{\nu}{2}}} \right),
	\end{equation}
	where the sum is taken over the set $$ \mathbf{T}_{\nu;i,j} = \left\{ \mathbf t = (t_1,...,t_\nu) \in \{i,...,j\}^\nu ~|~ t_1 \leq ... \leq t_\nu \right\}.$$ Then the proof can be completed as follows: for each $l = 1,..., \floor{\nu/2}$, it holds $$ (m \tau^2)^l \leq \max\{1, (m\tau^2)^{\floor{\frac{\nu}{2}}}\} ,$$ and thus, with respect to our even $\nu \geq 2$, we have 
	\begin{align*}
		\norm{S_{n,i,j}}_{L_\nu} &= \norm{\sum_{t = i}^{j} Z_{t,n} }_{L_\nu} \\
		&\leq (\nu!)^{\frac{1}{\nu}} \left(\sum_{\mathbf t \in \mathbf{T_{\nu;i,j}}} \left| \E{Z_{t_1,n}\cdot ... \cdot Z_{t_\nu,n}} \right| \right)^{\frac{1}{\nu}} \\
		&\leq C \max\{1, \sqrt{m} \tau\} = C \sqrt{m} \max\{m^{-\frac12}, \tau\},
	\end{align*}
	for $C = (C_0 \nu! \nu/2)^{1/\nu},$ where $C_0$ is from \eqref{WeakMultiplicativitiyForQ}. 
	
	It remains to show  \eqref{WeakMultiplicativitiyForQ}, which is accomplished by induction over $\nu \geq 2$ with the help of the covariance inequality stated in Lemma \ref{L42Mohr}. So, let first $\nu = 2$. By Lemma \ref{MixingInvarianceTransformations}, the array $(Z_{t,n})$ is strongly mixing with $\alpha^{Z}(s) \leq \alpha^X(s), s \in \mathbb N$, and by H{\"o}lder's inequality and condition (ii), we have $$ \E{\left|Z_{t_1,n} Z_{t_2,n} \right|^{1 + \frac{\lambda}{2}}} = \E{|Z_{t_1,n} Z_{t_2,n}|^{\frac{2+\lambda}{2}}} \leq \tau^{2+\lambda},$$ for any $i \leq t_1 < t_2 \leq j$. Since the above estimate remains valid if either $Z_{t_1,n}$ or $Z_{t_2,n}$ is replaced by an independent copy, Lemma \ref{L42Mohr} is applicable with $g(x_1,x_2) = x_1 \cdot x_2,$ $\delta = \lambda/2$ and $M_n = \tau^{2+\lambda}$, from which we obtain (since the $Z_{t,n}$ are centered)
	\begin{align*}
		&\sum_{\mathbf t \in \mathbf{T_{2;i,j}}} \left| \E{Z_{t_1,n} Z_{t_2,n}}  \right| \\
		&= \sum_{t_1 = i}^{j} \E{|Z_{t_1,n}|^2} + \sum_{i\leq t_1 < t_2 \leq j} \left| \E{Z_{t_1,n} Z_{t_2,n}}  \right| \\
		&\leq \sum_{t_1 = i}^{j} \E{|Z_{t_1,n}|^2} + \sum_{i\leq t_1 < t_2 \leq j} 4 \tau^2 \alpha^{Z}(t_2 - t_1)^{\frac{\lambda}{2 + \lambda}} \\
		&\leq \sum_{t_1 = i}^{j} \E{|Z_{t_1,n}|^2} + 4 \tau^2 m \sum_{s = 1}^\infty \alpha^X(s)^{\frac{\lambda}{2 + \lambda}} \\
		&\leq m \tau^2 \left(1 + 4 \zeta(\alpha^X, \lambda,2) \right),
	\end{align*}
	where we have used condition (ii) in the last step.
	
	Next, let $\nu > 2$ be an arbitrary integer and assume that the assertion holds  for all $r = 2,....,\nu-1$. To show that it holds for $\nu$ as well, we decompose the sum over $\mathbf{T}_{\nu;i,j}$. For $\mathbf t = (t_1,...,t_\nu) \in \mathbf{T}_{\nu;i,j}$ let $$ G(\mathbf t) = \max \left\{t_{l+1} - t_l ~|~ l = 1,...,\nu -1 \right\}$$ 
	indicate the largest gap between any two consecutive entries $t_{l}, t_{l+1}$, and let
	$$k(\mathbf t) = \min \left\{l \in \{1,..., \nu-1\} ~|~ t_{l+1} - t_l = G(\mathbf t) \right\}$$ indicate its first occurence in $\mathbf t = (t_1,...,t_\nu)$. Note that $G(\mathbf t) = 0$ implies that all indices in $\mathbf t$ are equal. For those $\mathbf t $ with $G(\mathbf t ) > 0$, the idea is to identify the entry $t_l$ at which the largest gap appears (for the first time) and insert a zero by adding and subtracting the term $$ \E{Z_{t_1,n} ...Z_{t_l,n}}\E{Z_{t_{l+1},n}...Z_{t_\nu,n}} $$
	at this point. This results in one term to which Lemma \ref{L42Mohr} can be applied and another term that can be treated with the induction hypothesis. That is, by the triangle inequality and condition (ii), we have
	\begin{align}\label{FinalBound}
		&\sum_{\mathbf t \in \mathbf{T_{\nu;i,j}}} \left| \E{Z_{t_1,n} \cdot ... \cdot Z_{t_\nu,n}} \right| \nonumber \\
		&\quad \leq \sum_{t_1 = i}^{j} \E{\left| Z_{t_1,n}\right|^{\nu}} + \sum_{r = 1}^{\nu-1} \sum_{\mathbf t: G(\mathbf t) > 0, k(\mathbf t) = r} \left| \E{Z_{t_1,n} ... Z_{t_\nu,n}} \right| \nonumber \\
		&\quad\leq m\tau^2 \nonumber \\
		&\qquad + \sum_{r = 1}^{\nu-1} \sum_{\mathbf t: G(\mathbf t) > 0, k(\mathbf t) = r} \left| \E{Z_{t_1,n} ... Z_{t_\nu,n}} - \E{Z_{t_1,n} ...Z_{t_r,n}}\E{Z_{t_{r+1},n}...Z_{t_\nu,n}} \right| \nonumber\\ 
		&\qquad+ \sum_{r = 1}^{\nu-1} \sum_{\mathbf t: G(\mathbf t) > 0, k(\mathbf t) = r} \left| \E{Z_{t_1,n}...Z_{t_r,n}}\E{Z_{t_{r+1},n}...Z_{t_\nu,n}} \right| \nonumber\\
		&\quad= m\tau^2 + B_{1,n,i,j} + B_{2,n,i,j},
	\end{align}
	where
	\begin{align*}
		B_{1,n,i,j}	&= \sum_{r = 1}^{\nu-1} \sum_{\mathbf t: G(\mathbf t) > 0, k(\mathbf t) = r} \left| \E{Z_{t_1,n} ... Z_{t_\nu,n}} - \E{Z_{t_1,n} ...Z_{t_r,n}}\E{Z_{t_{r+1},n}...Z_{t_\nu,n}} \right| \nonumber,\\ 
		B_{2,n,i,j}	&= \sum_{r = 2}^{\nu-2} \sum_{\mathbf t: G(\mathbf t) > 0, k(\mathbf t) = r} \left| \E{Z_{t_1,n}...Z_{t_r,n}}\E{Z_{t_{r+1},n}...Z_{t_\nu,n}} \right|.
	\end{align*}
	Here, we used that for $r \in \{1,\nu-1\}$, because the $Z_{t,n}$ are centered, 
	$$ \left| \E{Z_{t_1,n}...Z_{t_r,n}}\E{Z_{t_{r+1},n}...Z_{t_\nu,n}} \right| = 0.$$ We now estimate the sums $B_{1,n,i,j}$ and $B_{2,n,i,j}$ separately. To bound $B_{1,n,i,j}$, we can use Lemma \ref{L42Mohr}. That is, if the $ t_1, ..., t_\nu $ are pairwise different, we can argue as in the proof of \cite[Lem.~4.1]{MO20} and apply Lemma \ref{L42Mohr} with $g(x_1,...,x_\nu) = x_1\cdot...\cdot x_\nu$, which gives 
	\begin{align*}
		&\left| \E{Z_{t_1,n} ... Z_{t_\nu,n}} - \E{Z_{t_1,n} ...Z_{t_r,n}}\E{Z_{t_{r+1},n}...Z_{t_\nu,n}} \right|\\
		&\leq 4 \tau^2 \alpha^X(t_{r+1} - t_r)^{\frac{\lambda}{2+\lambda}}.
	\end{align*} 
	The case of repetitions in $ t_1, ..., t_\nu$, which has not been discussed in \cite{MO20}, can be treated as follows:  We can group the indices and write $$ Z_{t_1,n}...Z_{t_\nu,n} = (Z_{k_1,n})^{p_1}...(Z_{k_L,n})^{p_L},$$ 
	for certain pairwise different indices $i \leq k_1 < ... < k_L \leq j$ and powers $p_1,...,p_L \in \{1,...,\nu-1\}$, for some $L \leq \nu$. Clearly, the gap $t_{r+1} \neq t_r$ is retained during this procedure, i.e. there is $l \in \{1,...,L-1\}$ with $k_{l+1} - k_l = t_{r+1} - t_r$ and hence $ \E{Z_{t_1,n}...Z_{t_r,n}}\E{Z_{t_{r+1},n}...Z_{t_\nu,n}}$ equals	\begin{align*}
		\E{(Z_{k_1,n})^{p_1}...(Z_{k_l,n})^{p_l}}\E{(Z_{k_{l+1},n})^{p_{l+1}}...(Z_{k_L,n})^{p_L}}.
	\end{align*} 
	Now let $$\xi_{t,n} = \begin{cases*}
		(Z_{k_l,n})^{p_l}, ~ t = k_l \text{ for an } l \in \{1,...,L\},\\
		Z_{t,n}, ~ \text{ else,}
	\end{cases*}$$
	and put $\xi_{t,N} = Z_{t,N}$ for all $N \neq n$, $t = 1,...,N$. We then have $(\xi_{t,n}) = (h_{t,n}(Z_{t,n}))$ for suitable Borel functions $(h_{t,n})$, which, by Lemma \ref{MixingInvarianceTransformations}, entails $\alpha^\xi(s) \leq \alpha^{Z}(s) \leq \alpha^X(s)$ for all $s \in \mathbb N$. Furthermore, by the generalized version of H{\"o}lder's inequality and condition (ii), we have
	$$ \E{|\xi_{k_1,n}...\xi_{k_L,n}|^{\frac{2+\lambda}{2}}} = \E{|Z_{t_1,n}...Z_{t_\nu,n}|^{\frac{2+\lambda}{2}}} \leq \prod_{l = 1}^{\nu} \norm{|Z_{t_l,n}|^{\frac{2+\lambda}{2}}}_{L_\nu} \leq \tau^{2+\lambda},$$ and the same holds if some of the $\xi_{k_l,n}$ are replaced by independent copies. Hence, Lemma \ref{L42Mohr} is applicable with $g(x_1,...,x_L) = x_1 \cdot ... \cdot x_L$, $\delta = \lambda/2$ and $M_n = \tau^{2+\lambda}$, from which we obtain
	\begin{align*}
		&\left| \E{Z_{t_1,n}...Z_{t_\nu,n}} - \E{Z_{t_1,n}...Z_{t_r,n}}\E{Z_{t_{r+1},n}...Z_{t_\nu,n}} \right| \\
		&= \left| \E{\xi_{k_1,n}...\xi_{k_L,n}} - \E{\xi_{k_1,n}...\xi_{k_l,n}}\E{\xi_{k_{l+1},n}...\xi_{k_L,n}} \right| \\
		&\leq 4 \tau^2 \alpha^\xi(k_{l+1} - k_l)^{\frac{\lambda}{2+\lambda}} \\
		&\leq 4 \tau^2 \alpha^X(t_{r+1} - t_r)^{\frac{\lambda}{2+\lambda}}.
	\end{align*}
	It follows
	\begin{align}\label{BoundforBn1}
		B_{1,n,i,j} &\leq 4\tau^2 \sum_{r = 1}^{\nu-1} \sum_{\mathbf t: G(\mathbf t) > 0, k(\mathbf t) = r} \alpha^X(t_{r+1} - t_r)^{\frac{\lambda}{2+\lambda}}.
	\end{align}
	Now, for any $\mathbf t = (t_1,...,t_\nu) \in \mathbf{T}_{\nu;i,j}$ with $G(\mathbf t) > 0$ and $k(\mathbf t) = r$ let $l(\mathbf t) \in \{i,...,j-1\}$ denote the index at which the largest gap occurs for the first time. Group the tuples $\mathbf t$ according to this location and the gap size $G(\mathbf t) = s \in \{1,...,m-1\}$, which gives
	$$ \sum_{\mathbf t: G(\mathbf t) > 0, k(\mathbf t) = r} \alpha^X(t_{r+1} - t_r)^{\frac{\lambda}{2+\lambda}} \leq \sum_{l = i}^{j} \sum_{s = 1}^{m} \sum_{\mathbf t \in \mathbf{T}_{\nu;i,j}^{r,l,s}} \alpha^X(s)^{\frac{\lambda}{2+\lambda}},$$ where we have put $$ \mathbf{T}_{\nu;i,j}^{r,l,s} = \left\{\mathbf t = (t_1,...,t_\nu) \in \mathbf{T}_{\nu;i,j} ~|~  k(\mathbf t) = r, l(\mathbf t) = l, G(\mathbf t) = s \right\}. $$ $\#(\mathbf{T}_{\nu;i,j}^{r,l,s})$ can be bounded as follows: By definition, for any $\mathbf t = (t_1,...,t_\nu) \in \mathbf{T}_{\nu;i,j}^{r,l,s}$, we have $t_1 \leq ... \leq t_\nu$, $t_r = l$ and $t_{r+1} = l+s$. Since $t_r$ is the smallest entry at which a gap of size $s$ occurs, there are at most $s$ different values that $t_{r-1}$ can take, i.e. $t_{r-1} \in \{l-s+1,...,l\}$. Analogously, there are at most $s$ different values that $t_{r-2}$ can take. Proceeding in this fashion, we conclude that there are at most $s^{r-1}$ possible values for the first $r-1$ entries $t_1,...,t_{r-1}$. Analogous reasoning shows that there are at most $(s+1)^{\nu-r-1}$ possibilities for the entries $t_{r+2},...,t_\nu$ (as gaps of size $s$ can occur among these entries). Hence, we can conclude that $\#(\mathbf{T}_{\nu;i,j}^{r,l,s}) \leq s^{r-1}(s + 1)^{\nu-r-1}$, which entails 
	\begin{align}\label{BoundforBn1Part2}
		\sum_{l = i}^{j} \sum_{s = 1}^{m} \sum_{\mathbf t \in \mathbf{T_{\nu;i,j}^{r,l,s}}} \alpha^X(s)^{\frac{\lambda}{2+\lambda}} &\leq \sum_{l = i}^{j} \sum_{s = 1}^{m} s^{r-1}(s + 1)^{\nu-r-1}  \alpha^X(s)^{\frac{\lambda}{2+\lambda}} \nonumber \\
		&\leq m \sum_{s = 1}^\infty (s+1)^{\nu-2} \alpha^X(s)^{\frac{\lambda}{2+\lambda}} \nonumber\\
		&\leq 2^{\nu-2} m \zeta(\alpha^X, \lambda, \nu).
	\end{align} 
	By inserting \eqref{BoundforBn1Part2} into \eqref{BoundforBn1}, we obtain
	\begin{equation}\label{BoundforB1}
		B_{1,n,i,j} \leq 2^{\nu} m \tau^2 (\nu-1) \zeta(\alpha^X, \lambda, \nu). 
	\end{equation}
	
	It remains to treat $B_{2,n,i,j}$. Denote $$ M_m(r) = m\tau^2 + ... + (m\tau^2)^{\floor{\frac{r}{2}}}, ~ r = 2,...,\nu,$$ and note that if $\mathbf t = (t_1,...,t_\nu) \in \mathbf{T}_{\nu;i,j}$ fulfills $G(\mathbf t) > 0$ and $k(\mathbf t) = r$, then $\mathbf t \in \mathbf{T}_{r; i,j} \times \mathbf{T}_{\nu-r; i,j}$, which entails $$\#\left( \left\{ \mathbf t \in \mathbf{T}_{\nu;i,j} ~|~ G(\mathbf t) > 0, k(\mathbf t) = r \right\} \right) \leq \#(\mathbf{T}_{r; i,j}) \#(\mathbf{T}_{\nu-r; i,j}).$$ Combined with the induction hypothesis, this implies 
	\begin{align*}
		B_{2,n,i,j} &= \sum_{r = 2}^{\nu-2} \sum_{\mathbf t: G(\mathbf t) > 0, k(\mathbf t) = r} \left| \E{Z_{t_1,n}...Z_{t_r,n}}\E{Z_{t_{r+1},n}...Z_{t_\nu,n}} \right| \\
		&\leq \sum_{r = 2}^{\nu-2} \sum_{\mathbf t \in \mathbf{T}_{r;i,j}} \left| \E{Z_{t_1,n}...Z_{t_r,n}} \right| \sum_{\mathbf t \in \mathbf{T}_{\nu-r;i,j}} \left| \E{Z_{t_{r+1},n}...Z_{t_\nu,n}} \right| \\
		&\leq \sum_{r = 2}^{\nu-2} C_r M_m(r) C_{\nu-r} M_m(\nu-r)
	\end{align*}
	for constants $C_r, r = 2,...,\nu-2$, that only depend on $r$, $\lambda$ and the mixing coefficients $(\alpha^X(t))_{t \in \mathbb N}$. For $r = 2,...,\nu-2$, $$ M_m(r) M_m(\nu-r) $$ is a polynomial in $m\tau^2$ of degree $$ \floor{\frac{r}{2}} \floor{\frac{\nu-r}{2}} \leq \floor{\frac{\nu}{2}},$$ and therefore there exist constants $C_{r,\nu}$ that only depend on $r$ and $\nu$ such that $$ M_m(r) M_m(\nu-r) \leq C_{r,\nu} M_m(\nu), ~ r = 2,...,\nu-2.$$ It follows
	\begin{equation}\label{BoundforB2}
		B_{2,n,i,j} \leq M_m(\nu) \sum_{r = 2}^{\nu-2} C_r C_{\nu-r} C_{r,\nu}.
	\end{equation} 
	
	By inserting \eqref{BoundforB2} and \eqref{BoundforB1} into \eqref{FinalBound}, we conclude
	\begin{align*}
		&\sum_{\mathbf t \in \mathbf{T_{\nu;i,j}}} \left| \E{Z_{t_1,n}...Z_{t_\nu,n}} \right| \\
		&\leq m\tau^2 \left(1 + 2^{\nu}(\nu-1) \zeta(\alpha^X,\lambda,\nu) \right) + M_m(\nu) \sum_{r = 2}^{\nu-2} C_r C_{\nu-r} C_{r,\nu}\\
		&\leq C M_m(\nu),
	\end{align*}
	for $$ C = 1 + 2^{\nu}(\nu-1) \zeta(\alpha^X,\lambda,\nu) + \sum_{r = 2}^{\nu-2} C_r C_{\nu-r} C_{r,\nu},$$ which is \eqref{WeakMultiplicativitiyForQ} with $C_0 = C.$ This concludes the proof.
\end{proof}

The proof of Theorem makes repeated use of Lemma \ref{MaximalInequalityUnderMixing} and the simple estimate
\begin{equation}\label{MaxBound}
	\norm{\max_{1 \leq i \leq N} |U_i| }_{L_p} \leq N^{\frac{1}{p}} \max_{1 \leq i \leq N} \norm{U_i}_{L_p}
\end{equation}
which is valid for any $p \geq 1$ and real random variables $U_1,...,U_N$ \cite[Lem.~2.2.2]{vdVW23}.

\begin{proof}[Proof of Theorem \ref{SupremumInequalityUnderMixing}]
	Let $\eta, \delta > 0$, $n \in \mathbb N$ and $1 \leq i \leq j \leq n$. Put $m = j-i+1$. Abbreviating $N_{[]}(\eta) := N_{[]}(\eta, \mathcal F, \rho_2)$, we shall show that
	\begin{align}\label{SupremumInequality_ETA}
		&\norm{\sup_{\rho(f-g) \leq \delta} |S_{n,i,j}(f) - S_{n,i,j}(g)|}_{L_{\nu}}^* \nonumber \\ 
		&\leq C_0 \sqrt{m} \left(N_{[]}^{\frac{2}{\nu}}\left(\eta\right) \left(m^{-\frac12} + \delta + \delta^{\frac{\nu}{2}} \right) + m^{-\frac{\lambda}{4}} + \int_0^{\eta} N_{[]}^\frac{1}{\nu}(\varepsilon) \varepsilon^{-\frac{\lambda}{2+\lambda}} d\varepsilon \right)
	\end{align}  
	for a constant $C_0$ that only depends on $\nu, \lambda$ and the mixing coefficients, from which the result follows by taking $0 < \kappa < \frac12 \land  \frac{\lambda}{4}.$
	
	For $k \in \mathbb N_0$ let $$ \eta_k = \eta 2^{-k}, ~~ \tau_k = \eta_k^{\frac{2}{2+\lambda}}, ~~ N_k = N_{[]}(\eta_k).$$ By (ii), for each $k \in \mathbb N_0$, we may choose $\mathcal J_k, \mathcal K_k$ with $\#(\mathcal J_k) = \#(\mathcal K_k) = N_k$ such that for each $f \in \mathcal F$, there exists $a_k(f) \in \mathcal J_k$ and $b_k(f) \in \mathcal K_k$ with $|f - a_k(f)| \leq b_k(f)$ and 
	\begin{equation}\label{ConditionOnBoundingClassKk}
		\max_{b \in \mathcal K_k} \left\{ \rho_2(b), \max_{l = 2,...,\nu} \sup_{1 \leq t \leq n, n \in \mathbb N} \E{|b(X_{t,n})|^{l\frac{2+\lambda}{2}}}^{\frac12} \right\} \leq \eta_k.
	\end{equation} 
	We pursue an idea similar to the one used in the proof of \cite[Thm.~2.5]{MO20} and show that, for each $f \in \mathcal F$, there exists $a_0^f \in \mathcal J_0$ (where, not necessarily, $a_0(f) = a_0^f$) with 
	\begin{align}\label{ApproxByJ0}
		&\norm{\sup_{f \in \mathcal F} |S_{n,i,j}(f) - S_{n,i,j}(a_0^f)|}_{L_\nu}^* \nonumber \\ 
		&\leq C_1 \sqrt{m} \left(N_0^{\frac{1}{\nu}} m^{-\frac12} + m^{-\frac{\lambda}{4}} + \int_{0}^{\eta} N_{[]}^\frac{1}{\nu}(\varepsilon) \varepsilon^{-\frac{\lambda}{2+\lambda}} d\varepsilon \right) 
	\end{align}
	for a constant $C_1$ that only depends on $\nu, \lambda$ and the mixing coefficients. Let us tentatively assume that \eqref{ApproxByJ0} is shown. Then the proof can be completed by taking similar steps as in the proof of \cite[Thm.~2.5]{MO20}. For the sake of completeness, we repeat some arguments. Define an equivalence relation on $\mathcal F$ by 
	$$ f \sim g \Leftrightarrow a_0^f = a_0^g, ~(f,g) \in \mathcal F^2, $$ which induces a partition of $\mathcal F$ into $N_0$ classes we denote by $\mathcal E_r, r = 1,...,N_0.$ Since $a_0^f = a_0^g,$ we then have
	\begin{align}\label{ApproxWithinEquivalenceClasses}
		&\norm{\max_{1 \leq r \leq N_0} \sup_{f,g \in \mathcal E_r} |S_{n,i,j}(f) - S_{n,i,j}(g)|}_{L_\nu}^* \nonumber \\
		&\leq 2 \norm{\sup_{f \in \mathcal F} |S_{n,i,j}(f) - S_{n,i,j}(a_0^f)|}_{L_\nu}^*,
	\end{align}
	by Lemma \ref{MinkowskiForOuter}. Now let $$ d(\mathcal E_r, \mathcal E_s) = \inf\left\{ \rho(f-g) ~|~ f \in \mathcal E_r, g \in \mathcal E_s \right\}, ~ r,s = 1,...,N_0,$$ and choose, for each $(r,s) \in \{1,...,N_0\}^2$, functions $\phi_{r,s} \in \mathcal E_r, \psi_{s,r} \in \mathcal E_s$ with $$ \rho(\phi_{r,s}- \psi_{s,r}) \leq d(\mathcal E_r, \mathcal E_s) + \delta.$$ Given that, for $f, g \in \mathcal F$ with $\rho(f-g) \leq \delta$, we then have, for $(r,s) \in \{1,...,N_0\}^2$ such that $f \in \mathcal E_r, g \in \mathcal E_s$,
	\begin{align} \nonumber 
		&|S_{n,i,j}(f) - S_{n,i,j}(g)| \\
		&\leq  |S_{n,i,j}(f) - S_{n,i,j}(\phi_{r,s})| + |S_{n,i,j}(g) - S_{n,i,j}(\psi_{s,r})| \nonumber  \\
		&+ |S_{n,i,j}(\phi_{r,s}) - S_{n,i,j}(\psi_{s,r})|\nonumber   \\
		&\leq 2 \max_{1 \leq r \leq N_0} \sup_{f,g \in \mathcal E_r} |S_{n,i,j}(f) - S_{n,i,j}(g)| \nonumber \\
		&+ \max_{1 \leq r,s \leq N_0, \rho(\phi_{r,s} - \psi_{s,r}) \leq 2\delta} |S_{n,i,j}(\phi_{r,s}) - S_{n,i,j}(\psi_{s,r})|. \label{MaxMeas}
	\end{align}
	Since $ \#\left( \left\{1 \leq r,s \leq N_0 ~|~ \rho(\phi_{r,s} - \psi_{s,r}) \leq 2\delta \right\} \right) \leq N_0^2, $ we have 
	\begin{align}\label{BoundingTheMax}
		&\norm{ \max_{1 \leq r,s \leq N_0, \rho(\phi_{r,s} - \psi_{s,r}) \leq 2\delta} |S_{n,i,j}(\phi_{r,s}) - S_{n,i,j}(\psi_{s,r})| }_{L_\nu} \nonumber \\
		&\qquad \leq N_0^\frac{2}{\nu} \max_{1 \leq r,s \leq N_0, \rho(\phi_{r,s} - \psi_{s,r}) \leq 2\delta} \norm{S_{n,i,j}(\phi_{r,s}) - S_{n,i,j}(\psi_{s,r})}_{L_\nu} 
	\end{align}
	by \eqref{MaxBound}. Hence, by applying Lemma \ref{MinkowskiForOuter} to the term \eqref{MaxMeas} and using \eqref{ApproxWithinEquivalenceClasses} and \ref{BoundingTheMax}, we obtain
	\begin{align}\label{FinalSupremum}
		&\norm{\sup_{\rho(f-g) \leq \delta} |S_{n,i,j}(f) - S_{n,i,j}(g)|}_{L_\nu}^* \nonumber \\
		&\leq 4 \norm{\sup_{f \in \mathcal F}|S_{n,i,j}(f) - S_{n,i,j}(a_0^f)| }_{L_\nu}^*  \nonumber \\
		&+N_0^{\frac{2}{\nu}} \max_{1 \leq r,s \leq N_0, \rho(\phi_{r,s} - \psi_{s,r}) \leq 2\delta} \norm{S_{n,i,j}(\phi_{r,s}) - S_{n,i,j}(\psi_{s,r})}_{L_\nu}.
	\end{align}
	It remains to estimate the maximum in the last line of the above display. Denote, for $1 \leq r,s \leq N_0$, $$Z_{t,n}(\phi_{r,s} - \psi_{s,r}) = \phi_{r,s}(X_{t,n}) - \psi_{s,r}(X_{t,n}) - \E{\phi_{r,s}(X_{t,n}) - \psi_{s,r}(X_{t,n})}.$$  By Jensen's inequality and the definition of $\rho$, we have  
	\begin{align*}
		\E{|Z_{t,n}(\phi_{r,s} - \psi_{s,r})|^{l\frac{2+\lambda}{2}}} &\leq 2^{l\frac{2+\lambda}{2}} \E{|\phi_{r,s}(X_{t,n}) - \psi_{s,r}(X_{t,n})|^{l\frac{2+\lambda}{2}}} \\
		&\leq 2^{\nu\frac{2+\lambda}{2}} \rho(\phi_{r,s} - \psi_{s,r})^{l\frac{2+\lambda}{2}} \\
		&\leq (2^\nu \max\{\delta, \delta^{\frac{\nu}{2}} \} )^{2+\lambda},
	\end{align*}
	for each $l = 2,...,\nu$. Thus, Lemma \ref{MaximalInequalityUnderMixing} with $ \tau = 2^\nu \max\{\delta, \delta^{\frac{\nu}{2}} \}$ gives 
	\begin{align}\label{FinalMaximum}
		&\max_{r,s: \rho(\phi_{r,s} - \psi_{s,r}) \leq 2\delta} \norm{S_{n,i,j}(\phi_{r,s}) - S_{n,i,j}(\psi_{s,r})}_{L_\nu}  \nonumber\\
		&\leq C_2 \sqrt{m} \max\{ m^{-\frac12}, 2^\nu \max\{\delta, \delta^{\frac{\nu}{2}} \} \} \nonumber \\ 
		&\leq C_2 \sqrt{m} \left(m^{-\frac12} + 2^\nu (\delta + \delta^{\frac{\nu}{2}}) \right),
	\end{align} 
	for a constant $C_2$ that only depends on $\nu, \lambda$ and the mixing coefficients. Inserting \eqref{FinalMaximum} and \eqref{ApproxByJ0} into \eqref{FinalSupremum}, we arrive at  
	\begin{align*}
		&\norm{\sup_{\rho(f-g) \leq \delta} |S_{n,i,j}(f) - S_{n,i,j}(g)|}_{L_\nu}^* \\
		&\leq C_2\sqrt{m} \left(N_0^{\frac{2}{\nu}} \left( m^{-\frac12} + 2^\nu (\delta + \delta^{\frac{\nu}{2}}) \right) \right) \\
		& + 4 C_1 \sqrt{m} \left(N_0^{\frac{1}{\nu}} m^{-\frac12} + m^{-\frac{\lambda}{4}} + \int_{0}^{\eta} N_{[]}^\frac{1}{\nu}(\varepsilon) \varepsilon^{-\frac{\lambda}{2+\lambda}} d\varepsilon \right) \\
		&\leq C_3 \sqrt{m} \left(N_0^{\frac{2}{\nu}} \left(m^{-\frac12} + \delta + \delta^{\frac{\nu}{2}} \right) + m^{-\frac{\lambda}{4}} + \int_0^{\eta} N_{[]}^\frac{1}{\nu}(\varepsilon) \varepsilon^{-\frac{\lambda}{2+\lambda}} d\varepsilon \right), 
	\end{align*}
	for a constant $C_3$ that only depends on $\nu,\lambda$ and the mixing coefficients, which proves \eqref{SupremumInequality_ETA} (with $C_0 = C_3$). 
	
	It therefore remains to verify \eqref{ApproxByJ0}. To do so, we distinguish two cases.
	
	\noindent
	\underline{Case 1:} $\tau_0 = \eta^{2/(2+\lambda)} \leq m^{-1/2}$.\\
	We have $$ \sqrt{m} \eta \leq \sqrt{m} (m^{-\frac12})^{\frac{2+\lambda}{2}} = m^{-\frac{\lambda}{4}}$$ and for each $f \in \mathcal F$, there exists $a_0(f) \in \mathcal J_0$ and $b_0(f) \in \mathcal K_0$ with
	\begin{align*}
		&|S_{n,i,j}(f) - S_{n,i,j}(a_0(f)) | \\
		&\leq S_{n,i,j}(b_0(f)) + 2\sum_{t = i}^{j} \E{b_0(f)(X_{t,n})} \\
		&\leq \sup_{f \in \mathcal F} |S_{n,i,j}(b_0(f))| + 2m \sup_{f \in \mathcal F} \rho_2(b_0(f)).
	\end{align*}  
	Note that both suprema in the last line of the above display are taken over finite sets, and that $\sup_{f \in \mathcal F} \rho_2(b_0(f)) \leq \eta_0 = \eta$, by \eqref{ConditionOnBoundingClassKk}. Hence, Lemma \ref{MinkowskiForOuter} and the estimate \eqref{MaxBound} entail
	\begin{align}\label{ApproxForK0}
		\norm{\sup_{f \in \mathcal F} \left|S_{n,i,j}(f) - S_{n,i,j}(a_0(f)) \right|}_{L_\nu}^* \nonumber 
		&\leq \norm{\sup_{f \in \mathcal F} |S_{n,i,j}(b_0(f))| }_{L_\nu} + 2\sqrt{m} \sqrt{m} \eta \nonumber \\
		&\leq N_0^{\frac{1}{\nu}} \sup_{f \in \mathcal F} \norm{S_{n,i,j}(b_0(f))}_{L_\nu} + 2\sqrt{m} m^{-\frac{\lambda}{4}}.
	\end{align}
	Next, we use Lemma \ref{MaximalInequalityUnderMixing} to bound $\sup_{f \in \mathcal F} \norm{S_{n,i,j}(b_0(f))}_{L_\nu}$. Let $Z_{t,n}(b_0(f)) =  b_0(f)(X_{t,n}) - \E{b_0(f)(X_{t,n})}, f \in \mathcal F$. Arguing as above, for $l = 2,...,\nu$ and $f \in \mathcal F,$ we have 
	$$ \E{|Z_{t,n}(b_0(f))|^{l\frac{2+\lambda}{2}}} \leq 2^{l\frac{2+\lambda}{2}} \E{|b_0(f)(X_{t,n})|^{l\frac{2+\lambda}{2}}} \leq 2^{\nu\frac{2+\lambda}{2}} \eta^2 = (2^{\frac{\nu}{2}} \tau_0)^{2+\lambda}$$ by \eqref{ConditionOnBoundingClassKk}, and thus Lemma \ref{MaximalInequalityUnderMixing} with $ \tau = 2^{\frac{\nu}{2}} \tau_0 $ entails	
	\begin{align*}
		\sup_{f \in \mathcal F} \norm{S_{n,i,j}(b_0(f))}_{L_\nu} &\leq C_2 \sqrt{m} \max\{m^{-\frac12}, 2^{\frac{\nu}{2}} \tau_0\} \\ 
		&\leq C_2 \sqrt{m} 2^{\frac{\nu}{2}} \max\{m^{-\frac12}, \tau_0\} = C_2 \sqrt{m} 2^{\frac{\nu}{2}} m^{-\frac12}.
	\end{align*} 
	By inserting the latter estimate into \eqref{ApproxForK0}, we conclude that for $a_0^f = a_0(f) \in \mathcal J_0$, $f \in \mathcal F,$ it holds
	\begin{align}\label{BoundForCase1}
		\norm{\sup_{f \in \mathcal F} \left|S_{n,i,j}(f) - S_{n,i,j}(a_0^f) \right|}_{L_\nu}^* 
		&\leq \sqrt{m} \left(C_2 2^{\frac{\nu}{2}} N_0^{\frac{1}{\nu}}m^{-\frac12} + 2 m^{-\frac{\lambda}{4}}  \right).
	\end{align}
	\underline{Case 2:} $\tau_0 = \eta^{2/(2+\lambda)} > m^{-1/2}$.\\
	Let $K = K(m) \in \mathbb N_0$ be the largest integer with $\tau_K \geq m^{-\frac12}$. Then $$ \sqrt{m} \eta_K = 2 \sqrt{m} \eta_{K+1} \leq 2 m^{-\frac{\lambda}{4}},$$ since $\eta_{K+1} = (\tau_{K+1})^\frac{2+\lambda}{2} \leq (m^{-\frac12})^{\frac{2+\lambda}{2}},$ and, by proceeding as in Case 1,
	\begin{align}\label{ApproxForKK}
		&\norm{\sup_{f \in \mathcal F} |S_{n,i,j}(f) - S_{n,i,j}(a_K(f))|}_{L_\nu}^* \nonumber\\
		&\leq N_K^{\frac{1}{\nu}} \sup_{f \in \mathcal F} \norm{S_{n,i,j}(b_K(f))}_{L_\nu} + 2\sqrt{m} \sqrt{m} \eta_K \nonumber \\
		&\leq N_K^{\frac{1}{\nu}} \sup_{f \in \mathcal F} \norm{S_{n,i,j}(b_K(f))}_{L_\nu} + 4 \sqrt{m} m^{-\frac{\lambda}{4}}. 
	\end{align}
	Once again, let $$Z_{t,n}(b_K(f)) = b_K(f)(X_{t,n}) - \E{b_K(f)(X_{t,n})}, f \in \mathcal F.$$ We then have $$ \E{|Z_{t,n}(b_K(f))|^{l\frac{2+\lambda}{2}}} \leq (2^{\frac{\nu}{2}} \tau_K)^{2+\lambda}$$ for each $l = 2,...,\nu$ and $f \in \mathcal F,$ and thus conclude that
	\begin{align*}
		\sup_{f \in \mathcal F} \norm{S_{n,i,j}(b_K(f))}_{L_\nu} &\leq C_2 \sqrt{m} \max\{m^{-\frac12}, 2^{\frac{\nu}{2}} \tau_K\} = C_2 \sqrt{m} 2^{\frac{\nu}{2}} \tau_K,
	\end{align*} 
	by Lemma \ref{MaximalInequalityUnderMixing}. By inserting the above estimate into \eqref{ApproxForKK}, we obtain
	\begin{align}\label{ApproxforKK2}
		&\norm{\sup_{f \in \mathcal F} \left|S_{n,i,j}(f) - S_{n,i,j}(a_K(f)) \right|}_{L_\nu}^* \nonumber\\ 
		&\leq \sqrt{m} \left(C_2 2^{\frac{\nu}{2}} N_K^{\frac{1}{\nu}} \tau_K + 4 m^{-\frac{\lambda}{4}}  \right).
	\end{align}
	We now further distinguish the cases $K = 0$ and $K > 0.$\\
	\underline{Case 2.1:} $K = 0$.\\
	The rough estimate 
	\begin{equation}\label{RoughEstimate}
		N_k^{\frac{1}{\nu}} \tau_k \leq \int_0^{\eta_k} N_{[]}^\frac{1}{\nu}(\varepsilon) \varepsilon^{-\frac{\lambda}{2+\lambda}} d\varepsilon \leq \int_0^\eta N_{[]}^\frac{1}{\nu}(\varepsilon) \varepsilon^{-\frac{\lambda}{2+\lambda}} d\varepsilon, ~ k \in \mathbb N_0,
	\end{equation}
	entails that for $a_0^f = a_0(f) \in \mathcal J_0, f \in \mathcal F,$ we have
	\begin{align}\label{BoundForCase21}
		&\norm{\sup_{f \in \mathcal F} \left|S_{n,i,j}(f) - S_{n,i,j}(a_0^f) \right|}_{L_\nu}^* \nonumber \\
		&\leq \sqrt{m} \left(C_2 2^{\frac{\nu}{2}} \int_0^\eta N_{[]}^\frac{1}{\nu}(\varepsilon) \varepsilon^{-\frac{\lambda}{2+\lambda}} d\varepsilon + 4 m^{-\frac{\lambda}{4}}  \right).
	\end{align}
	\underline{Case 2.2:} $K > 0$.\\
	We follow the steps taken in  the proof of \cite[Thm. 2.5]{MO20} and apply a chaining argument to obtain the elements $a_0^f \in \mathcal J_0, f \in \mathcal F$. To this end, pick any $\tilde a_K \in \mathcal J_K$ and let $\tilde a_{K-1}(\tilde a_K)$ be its best-approximation in $\mathcal J_{K-1}$ in the sense that
	\begin{align*}
		&\tilde a_{K-1}(\tilde a_K) \\ 
		&\in \mathrm{argmin}_{c \in \mathcal J_{K-1}} \left\{ \max_{l = 2,...,\nu} \sup_{1 \leq t \leq n, n \in \mathbb N} \E{|c(X_{t,n}) - \tilde a_K(X_{t,n}))|^{l\frac{2+\lambda}{2}}}^{\frac12}  \right\},
	\end{align*}
	which is well-defined by the nonnegativity of the objective function. Analogously, we may define the best-approximation of $\tilde a_{K-1}(\tilde a_K)$ in $\mathcal J_{K-2}$. By proceeding in this fashion for all $\tilde a_K \in \mathcal J_K$, we obtain a chain that links any $\tilde a_K \in \mathcal J_K$ to an element of $\mathcal J_0$. Hence, for each $f \in \mathcal F$ with approximating function $a_K(f) \in \mathcal J_K$, there exists a chain of best-approximations $a_{0}^f,...,a_{K-1}^f, a_K^f = a_K(f)$ running through the respective approximating classes $\mathcal J_0,...,\mathcal J_{K-1}, \mathcal J_K$ such that telescoping gives 
	\begin{align}\label{ChainFroma_Ktoa_0}
		\norm{\sup_{f \in \mathcal F} \left|S_{n,i,j}(a_K(f)) - S_{n,i,j}(a_0^f)\right|}_{L_\nu} \nonumber 
		&= \norm{\sup_{f \in \mathcal F} \left|\sum_{k = 1}^K S_{n,i,j}(a_k^f) - S_{n,i,j}(a_{k-1}^f)\right|}_{L_\nu} \nonumber \\
		&\leq \sum_{k = 1}^{K} \norm{\sup_{f \in \mathcal F} \left|S_{n,i,j}(a_k^f - a_{k-1}^f)\right|}_{L_\nu} \nonumber \\
		&\leq \sum_{k = 1}^{K} N_k^{\frac{1}{\nu}} \sup_{f \in \mathcal F}  \norm{S_{n,i,j}(a_k^f - a_{k-1}^f)}_{L_\nu},
	\end{align}
	where in the last step we used that for each $k = 1,...,K$, there exist no more than $N_k$ different pairs $(a_k^f, a_{k-1}^f) \in \mathcal J_k \times \mathcal J_{k-1}$, as each $a_k^f$ is linked to a single $a_{k-1}^f$. To estimate the last term of the above display, put again, for $f \in \mathcal F$ and $k = 1,...,K$, $$ Z_{t,n}(a_k^f - a_{k-1}^f) = (a_k^f - a_{k-1}^f)(X_{t,n}) - \E{(a_k^f - a_{k-1}^f)(X_{t,n})}.$$ For $k = 1,...,K$ and $f \in \mathcal F$, it holds $a_k^f \in \mathcal J_k \subset \mathcal F,$ and thus there exists an approximating function $c(a_k^f) \in \mathcal J_{k-1}$ and a bounding function $d(a_k^f) \in \mathcal K_{k-1}$ with $|a_k^f - c(a_k^f)| \leq d(a_k^f)$ and, by \eqref{ConditionOnBoundingClassKk}, $$ \max_{l = 2,...,\nu} \sup_{1 \leq t \leq n, n \in \mathbb N} \E{\left|d(a_k^f)(X_{t,n})\right|^{l \frac{2+\lambda}{2}}}^{\frac12} \leq \eta_{k-1},  $$ uniformly in $f \in \mathcal F$. Hence, since the $a_{k-1}^f$ are chosen to best-approximate the $a_k^f$, we have
	\begin{align*}
		&\max_{l = 2,...,\nu} \sup_{1 \leq t \leq n, n \in \mathbb N} \E{|Z_{t,n}(a_k^f - a_{k-1}^f)|^{l\frac{2+\lambda}{2}}}\\ 
		&\leq 2^{\nu\frac{2+\lambda}{2}} \max_{l = 2,...,\nu} \sup_{1 \leq t \leq n, n \in \mathbb N} \E{|(a_k^f - a_{k-1}^f)(X_{t,n})|^{l\frac{2+\lambda}{2}}} \\
		&\leq 2^{\nu\frac{2+\lambda}{2}} \max_{l = 2,...,\nu} \sup_{1 \leq t \leq n, n \in \mathbb N} \E{|(a_k^f - c(a_{k}^f))(X_{t,n})|^{l\frac{2+\lambda}{2}}} \\
		&\leq 2^{\nu\frac{2+\lambda}{2}} \eta_{k-1}^2 = \left(2^{\frac{\nu}{2}} \tau_{k-1}\right)^{2+\lambda}.
	\end{align*} 
	Again, Lemma \ref{MaximalInequalityUnderMixing} is applicable, and so, in view of $\tau_{k} \geq m^{-\frac12}$ for all $k = 0,...,K$ we obtain $$ \sup_{f \in \mathcal F}  \norm{|S_{n,i,j}(a_k^f - a_{k-1}^f)|}_{L_\nu} \leq C_2 \sqrt{m} 2^{\frac{\nu}{2}} \tau_{k-1},~ k = 1,..., K.$$ By inserting this estimate into \eqref{ChainFroma_Ktoa_0}, we arrive at the bound
	\begin{align*}
		\norm{\sup_{f \in \mathcal F} |S_{n,i,j}(a_K(f)) - S_{n,i,j}(a_0^f)|}_{L_\nu}^* &\leq \sum_{k = 1}^{K} N_k^{\frac{1}{\nu}} 2^{\frac{\nu}{2}} C_2 \sqrt{m} \tau_{k-1}\\
		&\leq 2^{\frac{\nu}{2}} C_2 \sqrt{m} \sum_{k = 1}^{\infty} N_{k}^\frac{1}{\nu} \eta_{k-1}^{\frac{2}{2+\lambda}} \\
		&\leq 2^{\frac{\nu}{2} + 2} C_2 \sqrt{m} \sum_{k = 1}^{\infty} N_{k}^\frac{1}{\nu} \eta_{k}^{-\frac{\lambda}{2+\lambda}} (\eta_{k} - \eta_{k+1}) \\
		&\leq 2^{\frac{\nu}{2} + 2} C_2 \sqrt{m} \sum_{k = 0}^{\infty} N_{k}^\frac{1}{\nu} \eta_{k}^{-\frac{\lambda}{2+\lambda}} (\eta_{k} - \eta_{k+1}) \\
		&\leq 2^{\frac{\nu}{2} + 2} C_2 \sqrt{m} \sum_{k = 0}^{\infty} \int_{\eta_{k+1}}^{\eta_{k}} N_{[]}^\frac{1}{\nu}(\varepsilon) \varepsilon^{-\frac{\lambda}{2+\lambda}} d\varepsilon \\
		&= 2^{\frac{\nu}{2} + 2} C_2 \sqrt{m}  \int_{0}^{\eta} N_{[]}^\frac{1}{\nu}(\varepsilon) \varepsilon^{-\frac{\lambda}{2+\lambda}} d\varepsilon,
	\end{align*}
	from which, by Lemma \ref{MinkowskiForOuter}, \eqref{ApproxforKK2} and \eqref{RoughEstimate}, 
	\begin{align}\label{BoundForCase22}
		&\norm{\sup_{f \in \mathcal F} |S_{n,i,j}(f) - S_{n,i,j}(a_0^f)|}_{L_\nu}^* \nonumber \\ 
		&\leq C_22^{\frac{\nu}{2}+2}  \sqrt{m} \left( N_{K}^{\frac{1}{\nu}} \tau_K + m^{-\frac{\lambda}{4}} + \int_{0}^{\eta} N_{[]}^\frac{1}{\nu}(\varepsilon) \varepsilon^{-\frac{\lambda}{2+\lambda}} d\varepsilon \right) \nonumber \\
		&\leq C_22^{\frac{\nu}{2}+2}  \sqrt{m} \left( m^{-\frac{\lambda}{4}} + 2 \int_{0}^{\eta} N_{[]}^\frac{1}{\nu}(\varepsilon) \varepsilon^{-\frac{\lambda}{2+\lambda}} d\varepsilon \right)
	\end{align}	
	follows for $a_0^f \in \mathcal J_0, f\in \mathcal F,$ defined as above.
	
	By combining the bounds in \eqref{BoundForCase1}, \eqref{BoundForCase21} and \eqref{BoundForCase22}, we conclude that in either case, we have 
	\begin{align*}
		&\norm{\sup_{f \in \mathcal F} |S_{n,i,j}(f) - S_{n,i,j}(a_0^f)|}_{L_\nu}^* \nonumber \\
		&\leq C_2 2^{\frac{\nu}{2} + 3} \sqrt{m} \left(N_0^{\frac{1}{\nu}} m^{-\frac12} + m^{-\frac{\lambda}{4}} + \int_{0}^{\eta} N_{[]}^\frac{1}{\nu}(\varepsilon) \varepsilon^{-\frac{\lambda}{2+\lambda}} d\varepsilon \right)
	\end{align*}
	for suitably chosen functions $a_0^f \in \mathcal J_0, f \in \mathcal F,$ which proves \eqref{ApproxByJ0} (with $C_1 = 2^{\frac{\nu}{2} + 3} C_2)$. This concludes the proof.
\end{proof}

\begin{proof}[Proof of Corollary \ref{FCLTApplicableUnderMixing}]
	We use Theorem \ref{UniformFCLT}. Firstly, by (ii) of Theorem \ref{SupremumInequalityUnderMixing}, $(\mathcal F, \rho)$ is totally bounded. Secondly, Theorem \ref{SupremumInequalityUnderMixing} with $\eta = \sqrt{\delta}$ gives 
	\begin{equation}\label{MixingFCLT_bound_algebraic}
		\norm{\sup_{\rho(f-g)\leq \delta} |S_{n,i,j}(f-g)|}_{L_\nu}^* \leq C \left[ m \left( R(\delta) + J(\delta) m^{-\kappa} \right)^2 \right]^{\frac{\nu}{2}} =: \gamma^{\frac{\nu}{2}}(m,\delta)
	\end{equation}
	for any $\delta > 0, n \in \mathbb N$ and $1 \leq i \leq j \leq n, m = j-i+1$, where
	\begin{align*}
	R(\delta) = N_{[]}^{\frac{2}{\nu}}\left(\sqrt{\delta}, \mathcal F, \rho_2 \right) \left(\delta + \delta^{\frac{\nu}{2}} \right) + \int_0^{\sqrt{\delta}} N_{[]}^\frac{1}{\nu}(\varepsilon, \mathcal F, \rho_2) \varepsilon^{-\frac{\lambda}{2+\lambda}} d\varepsilon
	\end{align*}
	and
	\begin{align*}
	J(\delta) = N_{[]}^{\frac{2}{\nu}}\left(\sqrt{\delta}, \mathcal F, \rho_2 \right)
	\end{align*}	
	are finite and nonnegative. Moreover, as $\delta \downarrow 0$,
	$$ N_{[]}^{\frac{2}{\nu}}\left(\sqrt{\delta} \right) \delta  = \left(N_{[]}^{\frac{1}{\nu}}(\sqrt{\delta}) \sqrt{\delta}\right)^2 \leq \left(\int_0^{\sqrt{\delta}} N_{[]}^{\frac{1}{\nu}}(\varepsilon) d\varepsilon \right)^2 \to 0, $$
	$$ N_{[]}^{\frac{2}{\nu}}\left(\sqrt{\delta} \right) \delta^{\frac{\nu}{2}} \to 0 $$ and $$ \int_0^{\sqrt{\delta}} N_{[]}^\frac{1}{\nu}(\varepsilon, \mathcal F, \rho_2) \varepsilon^{-\frac{\lambda}{2+\lambda}} d\varepsilon \to 0$$ by the dominated convergence theorem and $$ \int_0^1 N_{[]}^{\frac{1}{\nu}}(\varepsilon) d\varepsilon \leq \int_0^1 N_{[]}^\frac{1}{\nu}(\varepsilon) \varepsilon^{-\frac{\lambda}{2+\lambda}} d\varepsilon < \infty,$$ hence $R(\delta) \to 0$ as $\delta \downarrow 0.$ Finally, as the right-hand side of \eqref{MixingFCLT_bound_algebraic} is nonincreasing in $\kappa$, we can take it small enough to satisfy the condition of Theorem \ref{UniformFCLT}. This verifies condition \eqref{UniformFCLT-gammabound}. Lastly, by Jensen's inequality and \eqref{FCLTApplicableUnderMixingMomentCondition}, for each $f \in \mathcal F$ $$ \max_{l = 2,...,\nu} \sup_{1 \leq t \leq n, n \in \mathbb N} \E{\left|f(X_{t,n}) - \E{f(X_{t,n})} \right|^{\frac{l}{2}(2+\lambda)}} \leq \left(2^{\frac{\nu}{2}} K^{\frac{1}{2+\lambda}} \right)^{2+\lambda},$$ which by Lemma \ref{MaximalInequalityUnderMixing} entails that for each $n \in \mathbb N$ and $1 \leq i \leq j \leq n$,
	$$ \sup_{f \in \mathcal F} \norm{S_{n,i,j}(f)}_{L_\nu} \leq 2^{\frac{\nu}{2}} K^{\frac{1}{2+\lambda}} C_1 \sqrt{m}. $$ The constant $C_1$ only depends on $\nu, \lambda$ and the mixing coefficients, thereby proving \eqref{UniformFCLT-f0bound}. The result follows.
\end{proof}

\subsubsection{Proof of Theorem~\ref{SupremumInequalityUnderMixing-geometric}}
We first prove the maximal inequality.
\begin{proof}[Proof of Lemma \ref{MaximalInequality-geometric}]
	Let $p > 2$, $n \in \mathbb N$ and $1 \leq i \leq j \leq n$ with $m = j-i+1$. By \cite[Thm.~6.3]{Rio17}, there exist constants $a(p), b(p)$ that only depend on $p$ such that 
	\begin{align}\label{Maximal-geometric-initial}
		&\E{ \left| S_{n,i,j}(h) \right|^p } \nonumber \\
		&\leq a(p) \left( s_{n,i,j}^2 \right)^{\frac{p}{2}} + m b(p) \int_0^1 \left[\alpha_{n,i,j}^{-1}(u) \land m \right]^{p-1} \max_{i \leq k \leq j} Q_{k,h}^p(u) du.
	\end{align}
	Here, $\alpha_{n,i,j}(t)$ are the mixing coefficients of $(X_{k,n})_{i \leq k \leq j}$, the function $\alpha_{n,i,j}^{-1}(u)$ is defined in eq. (1.21) of \cite{Rio17}, $$ s_{n,i,j}^2 = \sum_{k_1, k_2 = i}^j |\mathrm{Cov}(h(X_{k_1,n}), h(X_{k_2,n}))|, $$ and $Q_{k,h}$ is the quantile function of $|h(X_{k,n}) - \E{h(X_{k,n})}|$. Observe that for all $t \geq 0$, $\alpha_{n,i,j}(t) \leq \alpha^X(t)$ and $ \max_{i \leq k \leq j} Q_{k,h} \leq 2 \norm{h}_\infty$. So, by the results in Appendix C of \cite{Rio17}, we have 
	\begin{align*}
		&\int_0^1 [\alpha_{i,j}^{-1}(u) \land m]^{p-1} du \max_{i \leq k \leq j} Q_{k,h}^p(u) du \\ 
		&\leq 2 \norm{h}_\infty^p (p-1) \sum_{k = 0}^{m-1} (k+1)^{p-2} \alpha_{n,i,j}(k) \\
		&\leq 2 \norm{h}_\infty^p (p-1) \sum_{k = 0}^{m-1} (k+1)^{p-2} \alpha^X(k).
	\end{align*}
	Next, as $\alpha^X(k) \leq C_\beta \beta^k$ and $$(k+1)^{\floor{p}} \leq \floor{p}! \binom{k+\floor{p}}{\floor{p}}, $$
	\begin{align*}
		\sum_{k = 0}^{m-1} (k+1)^{p-2} \alpha^X(k) &\leq C_\beta \sum_{k = 0}^{\infty} (k+1)^{\floor{p}} \beta^k \\ 
		&\leq \floor{p}! C_\beta \sum_{k = 0}^{\infty} \binom{k+\floor{p}}{\floor p} \beta^k = \floor{p}! \frac{1}{(1 - \beta)^{\floor{p}+1}}
	\end{align*} 
	so that, since $$ \limsup_{p \to \infty} \frac{(\floor{p}!)^{\frac{1}{p}}}{\floor{p}} < \infty, $$
	$$ \left[ \sum_{k = 0}^{\infty} (k+1)^{p-2} \alpha^X(k) \right]^{\frac{1}{p}} \lesssim p C_\beta (1 - \beta)^{-2}. $$
	Furthermore, by \cite[Cor.~1.1]{Rio17} and similar arguments,
	$$ s_{n,i,j}^2 \leq 4 m \int_0^1 [\alpha^X]^{-1}(u) \sup_{1 \leq k \leq n, n \in \mathbb N} Q_{k,h}^2(u) du =: 4 m \norm{h}_\alpha^2. $$	
	Finally, since $a(p)^{1/p} \lesssim \sqrt{p}$ and $b(p)^{1/p} \lesssim p$ (see \cite[Lem.~2]{H05}), we have thus shown the existence of a constant $C = C(\beta) \geq 0$ that only depends on the mixing coefficients and fulfills
	\begin{align}\label{Maximal-geometric-second}
		\norm{ S_{n,i,j}(h) }_{L_\nu} &\leq C \sqrt{m} \left(\sqrt{p} \norm{h}_{\alpha} + p^2 m^{-\frac12 + \frac{1}{p}} \norm{h}_\infty \right).
	\end{align}
	To further estimate \eqref{Maximal-geometric-second}, note that by Markov's inequality,  
	$$ P\left(\left|h(X_{k,n}) - \E{h(X_{k,n})} \right| > x \right) \leq \left( \frac{ 2 \rho_\nu(h) }{x} \right)^\nu, ~ x > 0. $$ The results in Appendix C of \cite{Rio17} hence give that $Q_{k,h}(u) \leq 2 \rho_\nu(h) u^{-1/\nu}$, uniformly in $k$. So, similar to the proof of \cite[Thm.~3]{H05}, by H\"older's inequality, for any $0 < 2/\nu < \theta < 1$, we have
	\begin{align}\label{geometric-normbound}
		\norm{h}_\alpha^2 &\leq \left( \int_0^1 \left( [ \alpha^X]^{-1} (u) \right)^{\frac{1}{1- \theta}} du \right)^{1-\theta} \left(\int_0^1 \sup_{1 \leq k \leq n, n \in \mathbb N} Q_{k,h}^{\frac{2}{\theta}}(u) du \right)^\theta \nonumber \\
		&\leq 4 \rho_\nu^2(h) \left(\frac{1}{1- \theta} \sum_{k = 0}^{\infty} (i+1)^{\frac{\theta}{1- \theta}} \alpha^X(k) \right)^{1-\theta} \left( \int_0^1 u^{-\frac{2}{\theta \nu}} \right)^\theta \nonumber \\
		&\leq C_1 \rho_\nu^2(h),
	\end{align}
	where $C_1$ does only depend on $\nu$ (via $\theta$) and the mixing coefficients. Here, we have applied eq. (C.5) of \cite{Rio17} in the second last inequality and that $2/(\theta \nu) < 1$ by our choice of $\theta$. By plugging \eqref{geometric-normbound} into \eqref{Maximal-geometric-second}, we conclude the first part of Lemma \ref{MaximalInequality-geometric}.
	
	The second assertion now follows from standard arguments (see, e.g., Section 2 in \cite{DL02}): by the first part of Lemma \ref{MaximalInequality-geometric}, for any $p \geq \nu$, 
	\begin{align*}
		\norm{\max_{h \in \mathcal H} \left|S_{n,i,j}(h) \right| }_{L_\nu} &\leq \norm{\max_{h \in \mathcal H} \left|S_{n,i,j}(h) \right| }_{L_p} \\
		&\leq \left( \sum_{h \in \mathcal H} \norm{ S_{n,i,j}(h) }_{L_p} \right)^{\frac{1}{p}} \\
		&\leq \#(\mathcal H)^{\frac{1}{p}} C \sqrt{m} \left( \sqrt{p} \max_{h \in \mathcal H} \rho_\nu(h) + p^2 m^{-\frac12 + \frac{1}{p}} \max_{h \in \mathcal H} \norm{h}_\infty \right).
	\end{align*} 
	So, if we choose $p = \nu \left(1 \lor \log \#(\mathcal H) \right)$, then, since $m^{1/p} \leq m^{1/\nu}$,
	\begin{align*}
		&\norm{\max_{h \in \mathcal H} \left|S_{n,i,j}(h) \right| }_{L_\nu} \\ 
		&\leq C e \nu^2 \left(1 \lor \log \#(\mathcal H) \right)^2 \sqrt{m} \left( \max_{h \in \mathcal H} \rho_\nu(h) + m^{-\frac12 + \frac{1}{\nu}} \max_{h \in \mathcal H} \norm{h}_\infty \right),
	\end{align*} 
	as claimed.
\end{proof}

\begin{proof}[Proof of Theorem \ref{SupremumInequalityUnderMixing-geometric}]
	Let $\delta > 0$, $n \in \mathbb N$ and $1 \leq i \leq j \leq n$. Denote $m = j-i+1$ and, for any $k \in \mathbb N_0$, $\eta_k = 2^{-k}$, $N_k = N_{[]}(\eta_k, \mathcal F, \rho_\nu)$. Recall that by \eqref{geometric-integral}, for any $k \in \mathbb N_0$ and $f \in \mathcal F$ there exist $a_k(f) \in J_k \subset \mathcal F$ and $b_k(f) \in \mathcal K_k$ such that $ |f - a_k(f) | \leq b_k(f)$, $\rho_\nu(b_k(f)) \leq \eta_k$, and $\#(\mathcal J_k) = \#(\mathcal K_k) = N_k < \infty$. 
	
	We start by defining some quantities to be used below. Let 
		\begin{equation}\label{geometric-DEF-s}
			s(k) = 2^{-k} \left(\sqrt{k} + \log^{\frac12} N_k + k^2 + \log^2 N_k \right), ~ k \in \mathbb N. 
		\end{equation} 
		By \eqref{geometric-integral}, this is a summable sequence, and so there exists a smallest positive integer $K_0 \in \mathbb N$ that only depends on $\mathcal F$ with 
		\begin{equation}\label{geometric-K0}
			0 < \sum_{k = K_0}^\infty s(k) \leq \frac{1}{2} ~~~ \text{ and } K_0 > \nu.
		\end{equation}
		Furthermore, let 
		\begin{equation}\label{geometric-K(D)}
			K(\delta) = \max\left\{K_0, \max\{z \in \mathbb N ~|~ N_z \leq \delta^{-1/2} \} \right\}
		\end{equation} 
		and note that $K(\delta) \geq 1$ and that we can assume that $K(\delta) \to \infty$, if $\delta \downarrow 0.$ Finally, let 
		\begin{equation}\label{geometric-DEF-eps-function}
			\varepsilon(\delta) = \sum_{k = K(\delta)}^\infty s(k)
		\end{equation} 
		and $$ K(m,\delta) = \left[ \frac{1}{2 \log 2} \log \left( \frac{m}{\varepsilon(\delta)} \right) \right], $$ where $[x]$ denotes the integer closest to $x \in \mathbb R.$ Note that $K(m,\delta)$ is well-defined, since $\varepsilon(\delta) > 0$, and also that $K(m,\delta) > 0,$ since 
		\begin{equation}\label{geometric-eps-function}
			\varepsilon(\delta) \leq \sum_{k = K_0}^\infty s(k) \leq \frac{1}{2} < 1,
		\end{equation} 
		by \eqref{geometric-K0}. Moreover, if $\delta \downarrow 0,$ then $K(\delta) \to \infty$ and therefore $\varepsilon(\delta) \to 0$. 
	
	Now, our strategy is the same as in the proof of Theorem \ref{SupremumInequalityUnderMixing}. The first step is to prove that for each $f \in \mathcal F$, there exists $a(K(\delta), f) \in \mathcal J_{K(\delta)}$ with 
		\begin{align}\label{geometric-mainbound}
			&\norm{\sup_{f \in \mathcal F} \left|S_{n,i,j}(f) - S_{n,i,j}(a(K(\delta), f)) \right|}_{L_\nu}^* \nonumber \\ 
			&\leq C \sqrt{m} \left( \varepsilon^{\frac12}(\delta) + \log^2 N_{K(\delta)} 2^{-K(\delta)} + \log^2 N_{K(\delta)} m^{-\left(\frac12 - \frac{1}{\nu} \right)}  \right),
		\end{align}
		where the constant $C \geq 0$ does only depend on $\nu$ and the mixing coefficients. Having shown this, we can argue exactly as in the proof of Theorem \ref{SupremumInequalityUnderMixing} (but with $\rho_\nu$ in place of $\rho$) to find that 
		\begin{align*}
			&\norm{ \sup_{\rho_\nu(f-g) \leq \delta} \left| S_{n,i,j}(f) - S_{n,i,j}(g) \right| }_{L_\nu}^* \\
			&\leq 4 \norm{\sup_{f \in \mathcal F} \left|S_{n,i,j}(f) - S_{n,i,j}(a(K(\delta), f)) \right|}_{L_\nu}^* \\ 
			&+ \norm{ \max_{1 \leq r,s \leq N_{K(\delta)}, ~ \rho_\nu(\phi_{r,s} - \psi_{s,r}) \leq 2\delta } \left|S_{n,i,j}(\phi_{r,s}) - S_{n,i,j}(\psi_{s,r}) \right|}_{L_\nu},
		\end{align*}
		where the $\phi_{r,s}$ and $\psi_{s,r}$ are functions in $\mathcal F$ that are defined in the proof of Theorem \ref{SupremumInequalityUnderMixing} (between eq. \eqref{ApproxWithinEquivalenceClasses} and \eqref{MaxMeas}). As the maximum in the rightmost term of the above display runs over at most $N_{K(\delta)}^2$ many functions, Lemma \ref{MaximalInequality-geometric} entails that 
		\begin{align}\label{geometric-remainder}
			&\norm{ \max_{1 \leq r,s \leq N_{K(\delta)}, ~ \rho_\nu(\phi_{r,s} - \psi_{s,r}) \leq 2\delta } \left|S_{n,i,j}(\phi_{r,s} - \psi_{s,r}) \right|}_{L_\nu} \nonumber \\ 
			&\leq 4 C_0 \log^2 N_{K(\delta)} \sqrt{m} \left( 2 \delta + 2 m^{-(\frac12 - \frac{1}{\nu})} \right)
		\end{align}			 			
	for a constant $C_0$ that only depends on $\nu$ and the mixing coefficients. Here, we have used that since $\sup_{f \in \mathcal F} |f| \leq 1$, $|\phi_{r,s} - \psi_{s,r}| \leq 2$. The bound asserted by Theorem \ref{SupremumInequalityUnderMixing-geometric} then follows from \eqref{geometric-mainbound}, \eqref{geometric-remainder} and by choosing $0 < \kappa \leq 1/2 - 1/\nu$ and 
	$$ \Lambda(\delta) = \varepsilon^{\frac12}(\delta) + \log^2 N_{K(\delta)} \left( 2^{-K(\delta)} + \delta \right), ~~~ \lambda(\delta) = \log^2 N_{K(\delta)}. $$ 
	Moreover, it is clear that $\Lambda$ and $\lambda$ are finite and nonnegative. Since $K(\delta) \to \infty$ if $\delta \downarrow 0$, we then also have $\varepsilon(\delta) \to 0,$ 
	$$ \log^2 N_{K(\delta)} \delta \leq  \log^2 \left( \delta^{-\frac12} \right) \delta \to 0 $$ and
	$$ \log^2 N_{K(\delta)} 2^{-K(\delta)} \leq \int_0^{2^{-K(\delta)}} \log^2 N_{[]}(\varepsilon, \mathcal F, \rho_\nu) d\varepsilon \to 0, $$ by our choice of $K(\delta)$, \eqref{geometric-integral} and dominated convergence, from which the result follows.
	
	It therefore suffices to prove \eqref{geometric-mainbound}. To do so, we distinguish the two cases $K(m,\delta) < K(\delta)$ and $K(m,\delta) \geq K(\delta)$. In preparation, note that since $[x] - x \in [-1/2, 1/2]$, it holds 
		\begin{equation}\label{geometric-upper}
			2^{-K(m,\delta)} \leq \sqrt{2} \exp\left( \frac{1}{2} \log \left( \frac{m}{\varepsilon(\delta)} \right) \right) = \sqrt{2} \varepsilon^{\frac12}(\delta) m^{-\frac12},
		\end{equation} 
		\begin{equation}\label{geometric-upper2}
			m^{-\frac12} 2^{K(m,\delta)} \leq \sqrt{2} m^{-\frac12} \sqrt{m} \varepsilon^{-\frac12}(\delta) = \sqrt{2} \varepsilon^{-\frac12}(\delta)
		\end{equation}
		and
		\begin{equation}\label{geometric-lower}
			m^{-\frac12} 2^{K(m,\delta)} \geq \frac{1}{\sqrt{2}} m^{-\frac12} \sqrt{m} \varepsilon^{-\frac12}(\delta) = \frac{1}{\sqrt{2}} \varepsilon^{-\frac12}(\delta) \geq 1,
		\end{equation}
		where the last inequality is a consequence of \eqref{geometric-eps-function}. 	\\
	
		\noindent \underline{Case 1:} $K(m,\delta) < K(\delta)$. \\
		For each $f \in \mathcal F$, let $a(K(\delta), f)$ be a function in $\mathcal J_{K(\delta)}$ with $|f - a(K(\delta), f)| \leq b_{K(\delta)}(f) \in \mathcal K_{K(\delta)}.$ Then, by Lemma \ref{MinkowskiForOuter}, we have 
		\begin{align*}
			&\norm{\sup_{f \in \mathcal F} \left|S_{n,i,j}(f) - S_{n,i,j}(a(K(\delta), f)) \right|}_{L_\nu}^* \nonumber \\ 
			&\leq 2 m \sup_{f \in \mathcal F} \rho_\nu(b_{K(\delta)}(f)) + \norm{\sup_{f \in \mathcal F} \left|S_{n,i,j}(b_{K(\delta)}(f)) \right|}_{L_\nu}.
		\end{align*}
		So, since $K(\delta) > K(m,\delta)$, by construction and \eqref{geometric-upper}, it holds $$ 2 m \sup_{f \in \mathcal F} \rho_\nu(b_{K(\delta)}(f)) \leq 2 m 2^{-K(\delta)} \leq 2 m 2^{-K(m,\delta)} \leq 2^{\frac32} \sqrt{m} \varepsilon^{\frac12}(\delta). $$ Furthermore, as in the proof of \cite[Thm.~3]{H05}, since $\sup_{f \in \mathcal F} |f| \leq 1$, we may assume that $\sup_{f \in \mathcal F} |b_{K(\delta)}(f)| \leq 1$. Hence, Lemma \ref{MaximalInequality-geometric} entails 
		\begin{align*}
			\norm{\sup_{f \in \mathcal F} \left|S_{n,i,j}(b_{K(\delta)}(f)) \right|}_{L_\nu} &\leq C_0 \log^2 N_{K(\delta)} \sqrt{m} \left( \sup_{f \in \mathcal F} \rho_\nu(b_{K(\delta)}(f)) + m^{-(\frac12 - \frac{1}{\nu})}  \right) \\
			&\leq C_0 \log^2 N_{K(\delta)} \sqrt{m} \left( 2^{-K(\delta)} + m^{-(\frac12 - \frac{1}{\nu})}  \right).
		\end{align*} 
		By combining the above two estimates, we obtain
		\begin{align}\label{geometric-firstcase}
			&\norm{\sup_{f \in \mathcal F} \left|S_{n,i,j}(f) - S_{n,i,j}(a(K(\delta), f)) \right|}_{L_\nu}^* \nonumber \\
			&\leq 2^{\frac32} C_0 \sqrt{m} \left( \varepsilon^{\frac12}(\delta) + \log^2 N_{K(\delta)} \left( 2^{-K(\delta)} + m^{-(\frac12 - \frac{1}{\nu})} \right) \right).
		\end{align}
	
		\noindent \underline{Case 2:} $K(m,\delta) \geq K(\delta)$. \\
		We can argue as in Case 2.2 of the proof of Theorem \ref{SupremumInequalityUnderMixing}. That is, for each $f \in \mathcal F$, we can construct a chain of $\rho_\nu$-best-approximations $a(K(\delta),f),...,a(K(m,\delta),f) = a_{K(m,\delta)}(f)$, i.e., for any $k = K(\delta)+1,...,K(m,\delta)$, we have $$ \rho_\nu \left( a(k,f) - a(k-1,f) \right) = \min_{a \in \mathcal J_{k-1}} \rho_\nu \left( a(k,f) - a \right).$$ Then, by $|f - a_{K(m,\delta)}| \leq b_{K(m,\delta)}(f)$ and Lemma \ref{MinkowskiForOuter}, it holds 
		\begin{align}\label{geometric-secondcase-bound}
			&\norm{\sup_{f \in \mathcal F} \left|S_{n,i,j}(f) - S_{n,i,j}(a(K(\delta), f)) \right|}_{L_\nu}^* \nonumber \\
			&\leq \norm{\sup_{f \in \mathcal F} \left|S_{n,i,j}(f) - S_{n,i,j}(a_{K(m,\delta)}(f)) \right|}_{L_\nu}^* \nonumber \\  &+\norm{\sup_{f \in \mathcal F} \left|S_{n,i,j}(a_{K(m,\delta)}(f)) - S_{n,i,j}(a(K(\delta), f)) \right|}_{L_\nu}^* \nonumber \\
			&\leq 2 m \sup_{f \in \mathcal F} \rho_\nu(b_{K(m,\delta)}(f)) + \norm{\sup_{f \in \mathcal F} \left|S_{n,i,j}(b_{K(m,\delta)}(f)) \right|}_{L_\nu} \nonumber \\
			&+ \sum_{k = K(\delta) + 1}^{K(m,\delta)} \norm{\sup_{f \in \mathcal F} \left| S_{n,i,j}\left(a(k,f) - a(k-1,f) \right) \right|}_{L_\nu}
		\end{align}
		Note that if $K(\delta) = K(m,\delta)$, the rightmost term in the above display would not appear. Furthermore, recall from Case 2.2 of the proof of Theorem \ref{SupremumInequalityUnderMixing} that our construction ensures that the $\sup_{f \in \mathcal F}$ in the rightmost term runs over at most $N_k$ many functions. 
		Now, to estimate the right-hand side of \eqref{geometric-secondcase-bound}, first observe that by construction and \eqref{geometric-upper}, 
		\begin{equation}\label{geometric-secondcase-bound-1}
			2 m \sup_{f \in \mathcal F} \rho_\nu(b_{K(m,\delta)}(f)) \leq 2 m 2^{-K(m,\delta)} \leq 2^{\frac32} \sqrt{m} \varepsilon^{\frac12}(\delta). 
		\end{equation} 
		To bound the remaining two terms, we can argue similar to \cite[Thm.~3]{H05}.	We only discuss the rightmost term in \eqref{geometric-secondcase-bound} in detail, the other one can be handled similarly. By \eqref{MaxBound} and Lemma \ref{MaximalInequality-geometric}, for each $k = K(\delta) + 1,...,K(m,\delta)$ and $p > \nu$, it holds
		\begin{align}\label{geometric-secondcase-bound-2-prog}
			&\norm{\sup_{f \in \mathcal F} \left| S_{n,i,j}\left(a(k,f) - a(k-1,f) \right) \right|}_{L_\nu} \nonumber \\ 
			&\leq N_k^{\frac{1}{p}} \sup_{f \in \mathcal F} \norm{S_{n,i,j}\left(a(k,f) - a(k-1,f) \right) }_{L_\nu} \nonumber \\
			&\leq C_1 N_k^{\frac{1}{p}} \sqrt{m} \left(\sqrt{p} \sup_{f \in \mathcal F} \rho_\nu(a(k,f) - a(k-1,f)) + p^2 2 m^{-(\frac12 - \frac{1}{p})}  \right) \nonumber \\
			&\leq 2 C_1 N_k^{\frac{1}{p}} \sqrt{m} \left(\sqrt{p} 2^{-k} + p^2 m^{-(\frac12 - \frac{1}{p})}  \right), 
		\end{align} 
		where the constant $C_1$ does only depend on $\nu$ and the mixing coefficients. Here, we have used that since $a(k,f) \in \mathcal J_k \subset \mathcal F$, it holds
		$$ \sup_{f \in \mathcal F} \rho_\nu \left(a(k,f) - a(k-1,f) \right) = \sup_{f \in \mathcal F} \min_{a \in \mathcal J_{k-1}} \rho_\nu \left(a(k,f) - a \right) \leq  2^{-k+1}. $$
		Furthermore, by \eqref{geometric-lower} and since $p > \nu > 2$ and $k \leq K(m,\delta)$, we have
		\begin{align*}
			m^{\frac{1}{p} - \frac12} &= \left(m^{-\frac12} 2^{k} \right)^{1-2/p} 2^{\frac{2k}{p}} 2^{-k} \\
			&\leq \left(m^{-\frac12} 2^{K(m,\delta)} \right)^{1-2/p} 2^{\frac{2k}{p}} 2^{-k} \\
			&\leq \left(m^{-\frac12} 2^{K(m,\delta)} \right) 2^{\frac{2k}{p}} 2^{-k}.
		\end{align*} 
		So, in view of \eqref{geometric-secondcase-bound-2-prog} and \eqref{geometric-lower}, we have
		\begin{align*}
			&\norm{\sup_{f \in \mathcal F} \left| S_{n,i,j}\left(a(k,f) - a(k-1,f) \right) \right|}_{L_\nu} \\ 
			&\leq 2 C_1 N_k^{\frac{1}{p}} \sqrt{m} \left(\sqrt{p} 2^{-k} + p^2 \left(m^{-\frac12} 2^{K(m,\delta)} \right) 2^{\frac{2k}{p}} 2^{-k} \right) \\
			&\leq 2 C_1 N_k^{\frac{1}{p}} \sqrt{m} \left(m^{-\frac12} 2^{K(m,\delta)} \right) \left(\sqrt{p} 2^{-k} + p^2 2^{\frac{2k}{p}} 2^{-k} \right),
		\end{align*} 
		for any $p>\nu$ and $k = K(\delta)+1,...,K(m,\delta).$ Following the proof of \cite[Thm.~3]{H05}, the choice $p = k + \log N_k$ (which fulfills $p \geq k \geq K(\delta) + 1 \geq K_0 + 1 > \nu$, by \eqref{geometric-K0}) now entails
		\begin{align}\label{geometric-secondcase-bound-2-prog-2}
			&\norm{\sup_{f \in \mathcal F} \left| S_{n,i,j}\left(a(k,f) - a(k-1,f) \right) \right|}_{L_\nu} \nonumber \\ 
			&\leq C_2 \sqrt{m} \left(m^{-\frac12} 2^{K(m,\delta)} \right) 2^{-k} \left(\sqrt{k} + \log^{\frac12} N_k + k^2 + \log^2 N_k \right),
		\end{align}
		where $C_2$ is a constant multiple of $C_1$, since $N_k^{1/p} \leq N_k^{1/(\log N_k)} \leq e$. 
		The estimate \eqref{geometric-secondcase-bound-2-prog-2} holds true for all $k = K(\delta) + 1,...,K(m,\delta)$, and by arguing analogously, we also find that 
		\begin{align}\label{geometric-secondcase-bound-2-prog-3}
			&\norm{\sup_{f \in \mathcal F} \left| S_{n,i,j}\left(b_{K(m,\delta)}(f) \right) \right|}_{L_\nu} \nonumber \\ 
			&\leq C_2 \sqrt{m} \left(m^{-\frac12} 2^{K(m,\delta)} \right) 2^{-K(m,\delta)} \bigg(\sqrt{K(m,\delta)} + \log^{\frac12} N_{K(m,\delta)} \nonumber \\
			&+ K(m,\delta)^2 + \log^2 N_{K(m,\delta)} \bigg) \nonumber \\
			&\leq C_2 \sqrt{m} \left(m^{-\frac12} 2^{K(m,\delta)} \right) \sum_{k = K(m,\delta)}^\infty s(k) \nonumber \\
			&\leq C_2 \sqrt{m} \left(m^{-\frac12} 2^{K(m,\delta)} \right) \varepsilon(\delta)
		\end{align}
		(recall from \eqref{geometric-DEF-s} and \eqref{geometric-DEF-eps-function} the definition of $s(k)$ and $\varepsilon(\delta)$). In the last step, we have used that $K(m,\delta) \geq K(\delta)$. In view of \eqref{geometric-secondcase-bound-2-prog-3}, \eqref{geometric-secondcase-bound-2-prog-2}, \eqref{geometric-secondcase-bound-1} and \eqref{geometric-secondcase-bound}, we have thus shown that 
		\begin{align}\label{geometric-secondcase-bound-3}
			&\norm{\sup_{f \in \mathcal F} \left|S_{n,i,j}(f) - S_{n,i,j}(a(K(\delta), f)) \right|}_{L_\nu}^* \nonumber \\ 
			&\leq C_3 \sqrt{m} \left( \varepsilon(\delta) \left(m^{-\frac12} 2^{K(m,\delta)} \right) + \varepsilon^{\frac12} (\delta) \right),
		\end{align}
		where $C_3$ depends on $\nu$ and the mixing coefficients only. By \eqref{geometric-upper2}, this entails 
		\begin{align}\label{geometric-secondcase}
			&\norm{\sup_{f \in \mathcal F} \left|S_{n,i,j}(f) - S_{n,i,j}(a(K(\delta), f)) \right|}_{L_\nu}^* \nonumber \\ 
			&\leq 2^{\frac32} C_3 \sqrt{m} \varepsilon^{\frac12} (\delta).
		\end{align}
		Taken together, the statements \eqref{geometric-secondcase} and \eqref{geometric-firstcase} prove \eqref{geometric-mainbound}.
\end{proof}

\begin{proof}[Proof of Corollary \ref{FCLTApplicableUnderMixing-geometric}]
	We use Theorem \ref{UniformFCLT}. By \eqref{geometric-integral}, $(\mathcal F, \rho_\nu)$ is totally bounded, and condition \eqref{UniformFCLT-gammabound} is an immediate consequence of Theorem \ref{SupremumInequalityUnderMixing-geometric} as we can take $\kappa > 0$ as small as desired. Finally, for any $f_0 \in \mathcal F,$ since $|f_0| \leq \sup_{f \in \mathcal F}|f| \leq 1$, Lemma \ref{MaximalInequality-geometric} gives
	$$ \norm{ S_{n,i,j}(f_0) }_{L_\nu} \leq C \sqrt{m} \left( \sqrt{\nu} \rho_\nu(f_0) + \nu^2 m^{-\left(\frac12 - \frac{1}{\nu}\right)} \right) \leq 2 \nu^2 C \sqrt{m}, $$ for any $1 \leq i \leq j \leq n$ with $m = j-i+1.$ This proves \eqref{UniformFCLT-f0bound} and concludes the proof.
\end{proof}

\end{document}